\definecolor{Blue}{rgb}{0.3,0.3,0.9}
\definecolor{orange}{rgb}{1,0.5,0}
\newcommand{\al}{\alpha}
\newcommand{\be}{\beta}
\newcommand{\de}{\delta}
\newcommand{\De}{\Delta}
\newcommand{\vth}{\vartheta}
\newcommand{\ep}{\epsilon}
\newcommand{\vep}{\varepsilon}
\newcommand{\la}{\lambda}
\newcommand{\z}{\zeta}
\newcommand{\om}{\omega}
\newcommand{\Om}{\Omega}
\renewcommand{\for}{\quad \text{for}\ }
\newcommand{\on}{\ \text{on}\ }
\newcommand{\inn}{\ \text{in}\ }
\newcommand{\ifff}{\quad \text{if}\ }
\newcommand{\andy}{\quad \text{and}\quad }
\newcommand{\with}{\quad \text{with}\ }
\newcommand{\where}{\quad \text{where}\ }
\newcommand{\wt}{\widetilde}
\newcommand{\wh}{\widehat}
\newcommand{\fy}{\phi}
\newcommand{\vfy}{\varphi}
\renewcommand{\H}{\mathcal{H}}
\newcommand{\T}{\mathcal{T}}
\newcommand{\K}{\mathcal{K}}
\newcommand{\R}{\mathbb{R}}
\newcommand{\A}{{A}}
\newcommand{\bA}{{\bf A}}
\newcommand{\C}{\mathcal{C}}
\newcommand{\tu}{\wt u}
\newcommand{\tU}{\wt U}
\newcommand{\bfu}{{\bf u}}
\newcommand{\bfv}{{\bf v}}
\newcommand{\bfU}{{\bf U}}
\newcommand{\wtDe}{{\wt \Delta}}
\newcommand{\wtE}{{\wt E}}
\newcommand{\tpj}{{\tilde \phi}_j^h}
\newcommand{\tlj}{{\tilde \lambda}_j^h}
\newcommand{\tde}{\wt \delta}
\newcommand{\tribar}{\vert\thickspace\!\!\vert\thickspace\!\!\vert}
\newcommand{\lla}{\langle}
\newcommand{\rra}{\rangle}
\newcommand{\Id}{{I}}
\newcommand{\Hdot}{\dot H}
\def\L2o{{L_2(\Om)}}
\def\K{\tau}
\def\T{{\mathcal{T}}}
\def\ww{Q_h \psi}
\def\w{\psi}
\def\I{J}
\def\Ah{\wt{ A}_h}
\def\vv{\psi}
\def\tveph{\wt{\varepsilon}_h}
\def\tah{\wt {a}_h}
\def\Pio0{{\Pi_{\z_0}}}
\def\wPio0{\widehat{\Pi}_{\z_0}}
\def\wtPio0{\widetilde{\Pi}_{\z_0}}
\def\Pioi{\Pi_{x_i}}
\def \qc{{\beta}}
\def \qbar{\widetilde{\beta}}
\def \all{{\alpha}}
\def \alb{{\widetilde \alpha}}
\def \Qal{\widetilde{Q}_h}
\def \detil{\widetilde{\delta}}
\def \dehat{\widehat{\delta}}
\numberwithin{equation}{section}
\newtheorem{theorem}{Theorem}[section]
\newtheorem{lemma}{Lemma}[section]
\newtheorem{corollary}{Corollary}[section]
\newtheorem{remark}[theorem]{Remark}
\newtheorem{proposition}{Proposition}[section]
\begin{document}
\title[Error estimates for the  FVEM]
{Some error estimates for the  finite volume element method for a
parabolic problem}

\author[]{P. Chatzipantelidis}

\address{Department of Mathematics,
University of Crete, GR--71409 Heraklion, Greece}
\email{chatzipa@math.uoc.gr}

\author[]{R.D. Lazarov}

\address{Department of Mathematics,
Texas A\&M University, College Station, TX--77843,  USA and
Institute of Mathematics and Informatics, Bulgarian Acad. Sciences, 
Acad. G.Bonchev str., bl.8, 1113 Sofia, Bulgaria}
\email{lazarov@math.tamu.edu}

\author[]{V. Thom\'ee}

\address{Mathematical Sciences, Chalmers University of Technology and the
University of Gothenburg, SE-412 96 G\"oteborg, Sweden,  and Institute of Applied and
Computational Mathematics, FORTH, Heraklion GR--71110, Greece}
\email{thomee@chalmers.se}

\subjclass[2000]{Primary 65M60, 65M15}

\date{started October, 2010; today is \today}

\keywords{finite volume  method, parabolic partial differential
equations, nonsmooth initial data, error estimates}

\begin{abstract}
We study spatially semidiscrete and fully discrete finite volume
element methods for the homogeneous heat equation with
homogeneous Dirichlet boundary conditions and derive error estimates
for smooth and nonsmooth initial data. We show that the results of
our earlier work \cite{clt11} for the lumped mass method carry over to
the present situation. In particular, in order for error estimates for
initial data only in $L_2$ to be of optimal second order for positive
time, a special condition is required, which is satisfied for
symmetric triangulations. Without any such condition, only first order
convergence can be shown, which is  illustrated by a counterexample.
Improvements  hold for triangulations that are almost symmetric and
piecewise almost symmetric.
\end{abstract}
\maketitle

\setcounter{equation}{0}


\section{Introduction}\label{sec:intro}

We consider the model initial--boundary value problem
\begin{equation}\label{eq1}
 u_t-\De u= 0, \inn\Om,\quad u=0,\on
\partial\Om, \for t\ge0,
\with 
u(0)=v,\inn\Om,
\end{equation}
where $\Om$  is a bounded convex polygonal domain in $\mathbb
R^2$.  We restrict ourselves to
 the homogeneous heat equation, thus without a forcing term,
 so that the initial values $v$ are the only data of the problem.
This problem has a unique solution $u(t)$, under  appropriate
assumptions on $v$, and this solution is smooth for $t>0$, even if
$v$ is not.

To express the smoothness properties of the solution of \eqref{eq1}, let, for $q\ge0$,
 $\dot H^q\subset L_2(\Om)$ be the Hilbert space defined by the norm
\begin{equation}\label{norms}
|w|_q=\Big(\sum_{j=1}^\infty\la_j^q(w,\fy_j)^2\Big)^{1/2}, \where
(w,\varphi)=\int_\Om w \varphi \,dx,
\end{equation}
and where $\{\la_j\}_{j=1}^\infty$, $\{\fy_j\}_{j=1}^\infty$ are the
eigenvalues, in increasing order, and orthonormal   
eigenfunctions of $-\De$ in $\Om$,
with homogeneous 
Dirichlet boundary conditions on $\partial\Om$. 
{Thus $|w|_0=\|w\|=(w,w)^{1/2}$
is the norm in $L_2=L_2(\Om)$, $|w|_1 =\|\nabla w\|$ the norm in
$H_0^1=H_0^1(\Om)$ and $|w|_2=\|\De w\|$ is equivalent to the norm
in $H^2(\Om)$ when $w=0$ on $\partial\Om$. 
}
Eigenfunction expansion
and Parseval's relation shows for 
 the solution $u(t)=E(t)v$ of
\eqref{eq1}  the stability and smoothing estimate
\begin{equation}\label{1.smooth}
|E(t)v|_p\le Ct^{-(p-q)/2}|v|_q, \for 0\le q\le p, \andy t>0.
\end{equation}
In fact, since the smallest eigenvalue is positive, a factor of
$e^{-ct}$, with $c>0$, may be included in the right hand side, and
this holds for all our stability, smoothing and error estimates
throughout our paper. Since our interest here is in small time we
shall not keep track of this decay for large time below.
{
We shall also use the norm $\| w \|_{\C^k}=\sum_{|\gamma|\le
k}\sup_{x\in\Om} |D_x^{\gamma}w(x)|$ in $\C^k=\C^k(\overline \Om)$, with
$\C=\C^0$, the space of
continuous functions on $\overline \Om$.  Here for 
$\gamma =(\gamma_1, \gamma_2)$,
$D_x^\gamma=(\partial/\partial x_1)^{\gamma_1}(\partial/\partial x_2)^{\gamma_2}$
and $|\gamma|=\gamma_1+\gamma_2$.
}

We first recall some facts about the spatially semidiscrete standard
Galerkin finite element method for \eqref{eq1} in the space of piecewise linear functions
\[
S_h =\{\chi\in \C: \ \chi|_\K\ \text{ linear},\quad \forall\, \K
\in \T_h; \  \chi|_{\partial \Om}=0\},
\]
where ${\{\T_h\}}$ is a family of regular triangulations
$\T_h=\{\tau\}$ of $\Omega$, with $h$ denoting the maximum diameter of
the triangles $\tau\in \T_h$. This method defines an approximation
$u_h(t)\in S_h$ of $u(t)$, for $t\ge0$, from
\begin{equation}\label{fem}
(u_{h,t},\chi) +(\nabla u_h,\nabla\chi)=0,\quad\forall\chi\in S_h,
\for t\ge0,\with u_h(0)=v_h,
\end{equation}
where $v_h \in S_h$ is an approximation of  $v$. It is well--known
that we have the smooth data error estimate, valid uniformly down to $t=0$,
 see e.g. \cite{Thomee06}, 
\begin{equation}\label{1.sm}
\|u_h(t)-u(t)\|\le Ch^2|v|_2,\  \ifff\  \|v_h-v\|\le
Ch^2|v|_2, \for t\ge0.
\end{equation}
We also have a nonsmooth data error estimate, for $v$  only assumed
to be in $L_2$,  which is of
optimal order $O(h^2)$ for $t$ bounded away from zero, but
deteriorates as $t\to0$,
\begin{equation}\label{1.nsm}
\|u_h(t)-u(t)\|\le Ch^2t^{-1}\|v\|,\  \ifff \ v_h=P_hv, \for t>0,
\end{equation}
where $P_h$ denotes the orthogonal $L_2-$projection onto $S_h$. Note
that the choice of
discrete initial data is not as general in this case as in
\eqref{1.sm}. We emphasize that the triangulations $\T_h$
are assumed to be independent of $t$, and thus the use of finer
$\T_h$ for $t$ small is not considered here.

We note  that a possible choice in \eqref{1.sm} is
$v_h=P_hv$, and  hence, by interpolation, we have the
intermediate result between \eqref{1.sm} and \eqref{1.nsm},
\begin{equation}\label{1.hsm}
\|u_h(t)-u(t)\|\le Ch^2t^{-1/2}|v|_1,\  \ifff v_h=P_hv, \for t>0.
\end{equation}

Recently, in \cite{clt11}, we showed  results similar to
\eqref{1.sm}--\eqref{1.hsm} for the lumped mass finite element method,
which may be defined by replacing the $L_2-$inner product in the first term
in \eqref{fem} by the quadrature approximation $(u_{h,t},\chi)_h$, where,
with $I_h:\C\to S_h$ 
{being} the interpolant defined by $I_hv(z)=v(z)$ for any vertex
$z$ of $\T_h$,
\[
(\chi,\psi)_h=\int_\Om I_h(\chi\,\psi)\,dx,\quad\forall \chi,\psi\in S_h.
\]
Improving earlier results, we  demonstrated
 that \eqref{1.sm} remains valid for the lumped mass method, but that
\eqref{1.nsm} requires restrictive conditions on $\{\T_h\}$,
caused by the use of quadrature in  \eqref{fem},  and
 satisfied, in
particular, for symmetric triangulations.
We  remark that the choice of discrete initial
data in the analogue of \eqref{1.hsm} was incorrectly
stated in \cite{clt11}, 
see Section \ref{sec:smooth} below.
\medskip

In the present paper our purpose is to carry over  the analysis in
\cite{clt11} to the finite volume element method for problem
\eqref{eq1}. This method is based  on a local conservation property
associated with the differential equation. Namely, integrating
\eqref{eq1} over any region $V\subset\Omega$ and using Green's
formula, we obtain
\begin{equation}\label{fv-general}
\begin{split}
\int_{V}u_t\,dx-\int_{\partial V}\nabla u\cdot
n\,d\sigma&= 0, 
 \for t \ge 0,
\end{split}
\end{equation}
where $n$ denotes the unit exterior normal vector to $\partial V$.
The  semidiscrete finite volume element approximation $\tu_h(t) \in S_h$, will satisfy 
\eqref{fv-general} for $V$ in a
finite collection of subregions of $\Om$ called control volumes,
the number of which will be equal to the dimension of the finite
element space $S_h$. These control volumes are constructed in the
following way. Let $z_\tau$ be the  barycenter of $\K\in \T_h$. We
connect $z_\K$ by line segments to the  midpoints of the edges of
$\K$, thus partitioning $\K$ into three quadrilaterals $\K_z$, $z\in
Z_h(\K)$, where $Z_h(\K)$ are the vertices of $\K$. Then with each
vertex $z\in Z_h=\cup_{\K\in \T_h}Z_h(\K)$ we associate a control volume $V_z$, which consists of
the union of the subregions $\K_z$, sharing the vertex $z$ (see
Figure \ref{fig-fv1}, left). We denote  the set of interior vertices of
$Z_h$ by $Z_h^0$. The semidiscrete finite volume element method 
for \eqref{eq1} is then to find $\tu_h(t)\in S_h$ such that
\begin{equation}\label{eq4}
\int_{V_z} \tu_{h,t}\,dx-\int_{\partial V_z}\nabla \tu_h\cdot
n\,d\sigma=0, 
\quad \forall z\in Z_h^0,\for t \ge 0,\with
\tu_h(0)=v_h,
\end{equation}
where $v_h\in S_h$ is an approximation of $v$.
\par
\begin{figure}[t]
\par
\centerline{ \psfig{figure=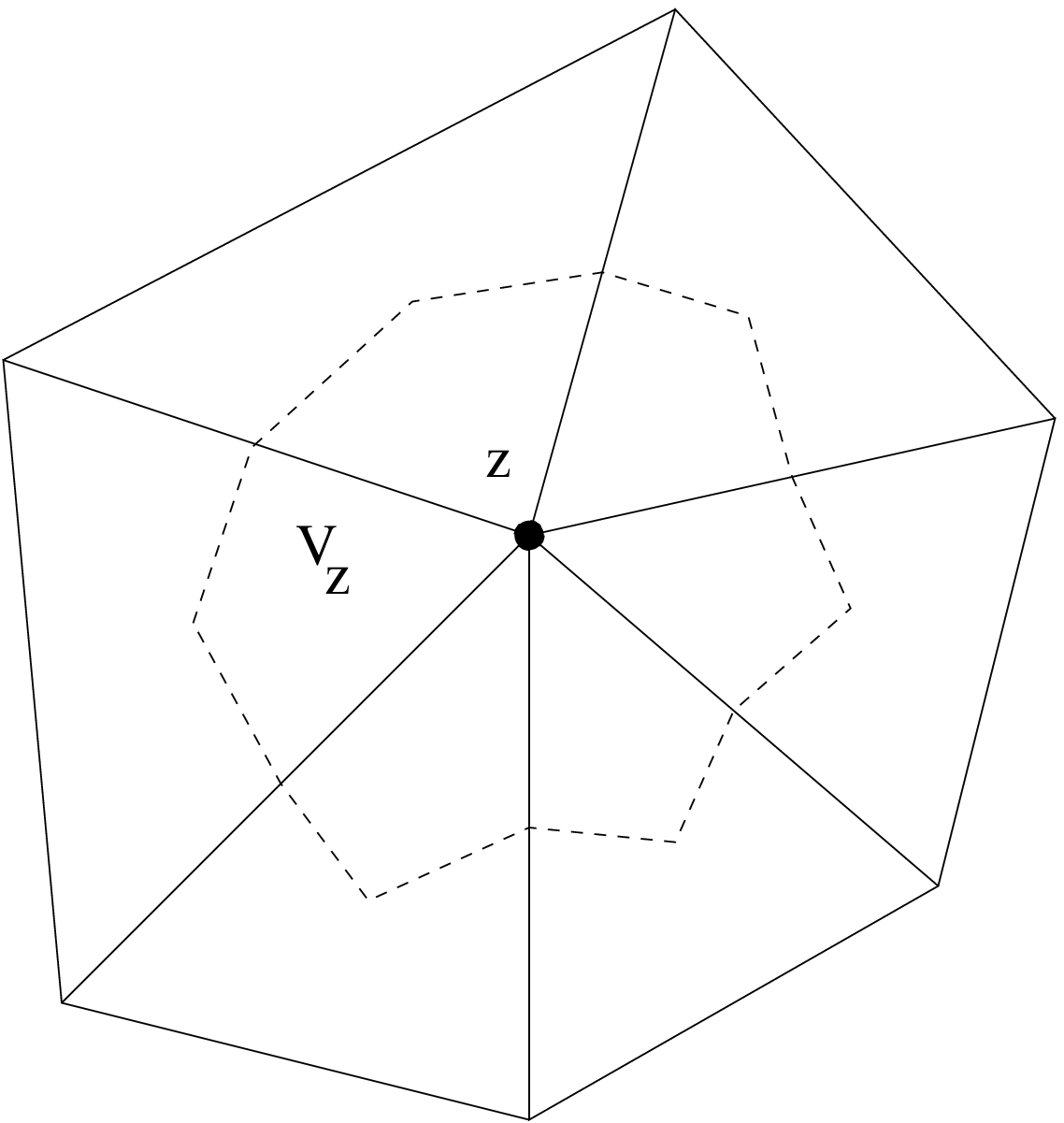,height=1.6in} \hspace{1cm}
\psfig{figure=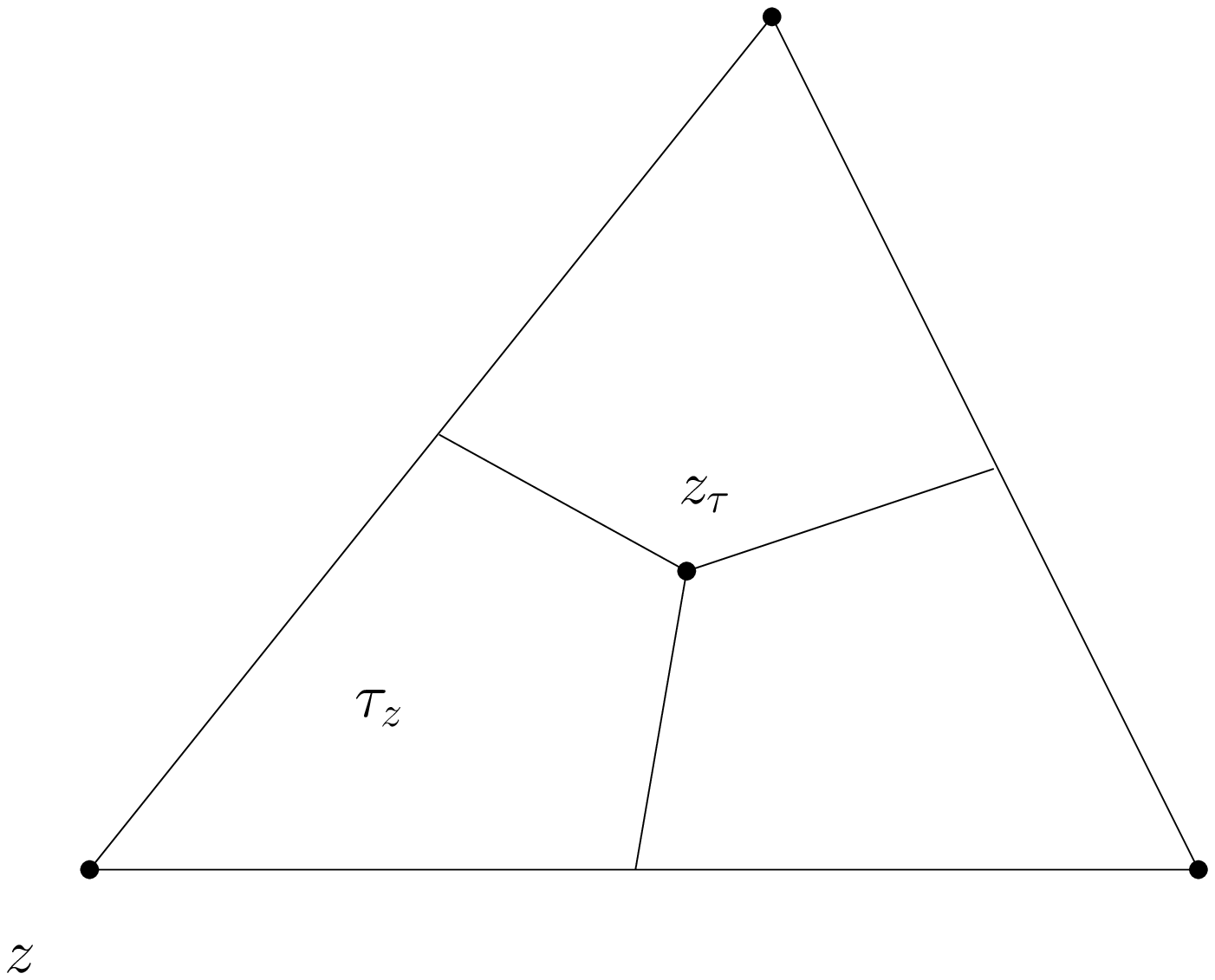,height=1.5in} }
\par
\caption { {\it Left:} A union of triangles that have a common
vertex $z$; the dotted line shows the boundary of the corresponding
control volume $V_z$. {\it Right:} A triangle $\K$ partitioned into
the three subregions $\K_z$.} \label{fig-fv1}
\end{figure}
\par

This problem may also be expressed in a weak form. For this purpose
we introduce the finite--dimensional space of piecewise constant functions
$$
Y_h= \{\eta\in L_2:\ \eta|_{V_z} \ \text{= constant},\ \forall z\in
Z_h^0;\ \eta|_{V_z}=0,\ \forall z\in Z_h\setminus Z_h^0\}.
$$
We now multiply \eqref{eq4} by $\eta(z)$ for an arbitrary $\eta \in
Y_h$,  and  sum over $z \in Z_h^0$ to obtain
the Petrov--Galerkin formulation
\begin{equation}\label{fv}
{(\tu_{h,t},\eta)}+ a_h(\tu_h,\eta)=0, 
\quad\forall \eta\in
Y_h, \for t \ge 0,\with \tu_h(0)=v_h,
\end{equation}
where  the bilinear form
$a_h(\cdot,\cdot):S_h\times Y_h\to \mathbb R$ is defined by
\begin{equation}\label{a-h0}
a_h(\chi,\eta)=-\sum_{z\in Z_h^0} \eta(z)\int_{\partial V_z}\nabla
\chi\cdot n\,d\sigma,\quad \forall \chi\in S_h,\  \eta\in Y_h.
\end{equation}
Obviously,  we can define $a_h(\cdot,\cdot)$ also for
$\chi$ replaced by $w\in H^2$, and using
Green's formula we then easily see that 
\begin{equation*}
a_h(w,\eta)=-(\De w,\eta),\quad\forall w\in H^2,\ \eta\in Y_h.
\end{equation*}

We shall now rewrite the  Petrov--Galerkin method \eqref{fv} as a
Galerkin method in $S_h$. For this purpose, we introduce  the interpolation
operator
 $\I_h: \C \mapsto Y_h$ by
$$
 \I_h u =
\sum_{z \in Z_h^0} u(z)\Psi_z,
$$
where  $\Psi_z$  is the characteristic function of the control volume
$V_z$. It is known 
that $\I_h$ is selfadjoint and positive definite, see \cite{cl00},
and hence  the following defines an inner product 
$\lla\cdot,\cdot\rra$ on $S_h$,
\begin{equation}
\label{1.fvip}
\lla\chi,\psi\rra=(\chi,\I_h\psi),\quad\forall \chi,\psi\in S_h.
\end{equation}
Also, the corresponding discrete norm is equivalent to the $L_2-$norm, 
uniformly in $h$, i.e., with $C \ge c>0$,
 \begin{equation*}
 c{\|\chi\|}\le {\tribar\chi\tribar}\le
 C{\|\chi\|},\quad\forall\chi\in S_h, \where
{\tribar\chi\tribar}\equiv{\lla\chi,\chi\rra}^{1/2},
 \end{equation*}
see \cite{cl00}. Further, in  \cite{Bank-rose}, it is shown that
 \begin{equation*}
 a_h(\chi, \I_h\psi)=(\nabla\chi,\nabla\psi),\quad\forall \chi,\psi\in S_h,
 \end{equation*}
and therefore, $a_h(\cdot,\cdot) $ is symmetric and $a_h(\chi, \I_h\chi)= \|\nabla \chi\|^2$, 
for $\chi\in S_h$.

With this notation, \eqref{fv} may equivalently be written in
Galerkin form as
 \begin{equation}\begin{split}\label{vvv}
\lla\tu_{h,t}, \chi\rra+ (\nabla\tu_h, \nabla\chi)&=0,\quad\forall
\chi\in S_h,\for t\ge0,\with \tu_h(0)=v_h.\end{split}
\end{equation}

Our aim  is thus to show analogues of \eqref{1.sm}--\eqref{1.hsm} 
for the solution of \eqref{vvv},  with the
appropriate choices of $v_h$, i.e.,
\begin{equation}\label{lm-estimates-1}
\|\tu_h(t)-u(t)\|\le Ch^2t^{-1+q/2}|v|_q, \for t>0,\quad q=0,1,2.
\end{equation}
This will be done below for $q=2$, and in the case $q=1$ under the additional
assumption that $\{\T_h\}$ is quasiuniform. 
However, for $q=0$, as in \cite{clt11}, we are only
able to
show \eqref{lm-estimates-1} under
 an additional hypothesis, expressed in terms of the
quadrature error operator $Q_h:S_h\to S_h$, defined by
\begin{equation}\label{q_h-def}
(\nabla Q_h\psi,\nabla \chi)=\vep_h(\psi,\chi),\quad\forall
\chi,\psi\in S_h,
\end{equation}
where $\vep_h(\cdot,\cdot)$ is the quadrature error defined here by
\begin{equation}\label{veph-def}
\vep_h(f,\chi)=( f,\I_h\chi)-{(f,\chi)},
\quad \forall f\in L_2,\ \chi \in S_h,
\end{equation}
and requiring
\begin{equation}\label{higher_error}
\| Q_h \psi \| \le C h^2 \|\psi\|,  \quad  \forall \psi \in S_h.
\end{equation}

We will show that this assumption is satisfied for {\it symmetric}
triangulations $\T_h$. Symmetry of $\T_h$, however,
 is a severe restriction which can
only hold for special shapes of $\Om$. For this reason
 we will also consider less restrictive families $\{\T_h\}$. 
We will demonstrate that 
\eqref{higher_error} holds for {\it almost symmetric} families
(discussed in Section \ref{sec:specialmeshes}), with the  addition of a
logarithmic factor; we also show that
this logarithmic factor is not needed in one space dimension. 
Further, for {\it piecewise almost symmetric} families of
triangulations, see Section \ref{sec:specialmeshes},
 the inequality \eqref{higher_error} holds with an $O(h^{3/2})$ bound.

We  then give two examples of  nonsymmetric
triangulations such that \eqref{lm-estimates-1} does not hold for
$q=0$.  
In the first example we construct $\{\T_h\}$ such that the convergence
factor is
at most of order $O(h)$ for $t>0$, and in the second example, with
nonsymmetry only along
a line, of order $O(h^{3/2})$. 
Without any additional condition on $\T_h$ we are only able to show the
nonoptimal order error estimate
\[
\| \tu_h(t)-u(t)\|\le Cht^{-1/2}\|v\|,\quad \ifff v_h=P_hv, \for t>0.
\]

We remark that in \cite{sel}, in the more general case of a parabolic
integro--differential equation, the nonsmooth data error estimate
\eqref{lm-estimates-1}, for $q=0$,
with an extra factor $|\log h|$,
was stated, for any quasiuniform family $\{\T_h\}$. Unfortunately,
this result is in contradiction to our above counterexamples, and its
proof incorrect.

We also discuss optimal order $O(h)$  error estimates for  the
gradient of $\tu_h-u$, under various assumptions on 
the smoothness of $v$ and choices of $v_h$. Further, in a separate section, we 
consider briefly the extension of our results for the spatially
semidiscrete problem to the fully discrete backward Euler and
Crank--Nicolson  finite volume  methods.

As for the lumped mass method in \cite{clt11}, our 
analysis yields improvements of earlier
results, in \cite{chatzipa-l-thomee04}, where it was shown that, for
smooth initial data and $v_h=R_hv$,
\[
\| \tu_h(t)-u(t)\|\le Ch^2|v|_3,  \for t>0,
\]
and
\[
\| \nabla(\tu_h(t)-u(t))\|\le Ch\epsilon^{-1}|v|_{2+\epsilon},
  \for t>0, \ \epsilon>0\ \text{small}.
\]
As in the case of the lumped mass  method in \cite{clt11}, these improvements 
are  made possible by combining, the error estimates \eqref{1.sm}--\eqref{1.hsm} 
for the standard Galerkin finite element method with bounds for the difference
$\de=\tu_h-u_h$, which, by \eqref{vvv} and \eqref{fem}, satisfies
\begin{equation}\label{1.del}
\lla\de_t,\chi\rra+(\nabla\de,\nabla\chi)=-\vep_h(u_{h,t},\chi),\quad \forall
\chi\in S_h, \for t\ge0.
\end{equation}

In the final section we sketch the extension  of the theory developed 
above  to more general parabolic equations, considering
the initial--boundary value problem
\begin{equation}\label{1.eq1-general}
 u_t+\A u= 0, \inn\Om,\quad
u=0,\on
\partial\Om, \for t\ge0, \with
u(0)=v,\inn\Om,
\end{equation}
where $\A u=-\nabla\cdot(\all\nabla u)+ \qc u$, with $\al$ a
smooth symmetric,  positive definite $2 \times 2$ matrix  function on $\overline \Om$  and 
$\qc$  a non--negative smooth function.

Here, let $u_h(t)\in S_h$,  denote 
the standard Galerkin finite element approximation of $u(t)$, defined by
\begin{equation}\label{1.fem-general}
 (u_{h,t}, \chi) + a( u_h, \chi) =0,\quad \forall \chi\in S_h, \for t
\ge 0, \with u_h(0)=v_h,
\end{equation}
where $v_h\in S_h$ is an approximation of $v$ and 
\begin{equation}\label{a:form}
{
a( w, \varphi) =(\all \nabla w,\nabla \varphi)
             + (\qc w,  \varphi), \for  w, \varphi  \in H^1_0. 
}
\end{equation}
In a straight--forward  way the estimates \eqref{1.sm}--\eqref{1.hsm}  
extend to  the solution of \eqref{1.fem-general}.

The natural generalization of the finite volume method
\eqref{fv} would now be to find $\tu_h(t)\in S_h$  such that
\begin{equation}\label{1.fv-general-2}
 \lla \tu_{h,t}, \chi \rra + a_h(\tu_h, \I_h\chi) =0, 
\quad \forall \chi \in S_h,\for
 t\ge0, \with  \tu_h(0)=v_h,
\end{equation}
where, instead of \eqref{a-h0}, one uses the bilinear 
defined by
\begin{equation}\label{1.a-h}
a_h(\vv,\eta)=\sum_{z\in Z_h^0} \eta(z)
     \Big ( - \int_{\partial V_z}  (\all \nabla \vv) \cdot n \, d\sigma  + \int_{ V_z} 
\qc \vv \,dx \Big ), \ \forall\vv \in S_h,\, \eta\in Y_h.
\end{equation}
It is known that, in general, the bilinear form 
$a_h(\psi,J_h\chi)$,  is nonsymmetric on $S_h$ but it is not far from being symmetric, or 
$|a_h(\chi,\I_h\psi)-a_h(\psi,\I_h\chi)|\le Ch\|\nabla \chi\|\,\|\nabla\psi\|$,
cf. \cite{cl00}. Also, if $\all$ and $\qc$ are constants  over each 
$\K\in \T_h$, then, see, e.g. \cite{Bank-rose,huang-xi},
\begin{equation}\label{a-symmetry}
a_h(\psi, \I_h\chi)=(\all \nabla \psi, \nabla \chi) + 
(\qc  \psi, \I_h\chi),\quad\forall \psi,\chi\in S_h,
\end{equation} 
and thus $a_h(\psi,\I_h\chi)$ is symmetric, since as we shall show
 $(\qc  \psi, \I_h\chi)=(\qc  \chi, \I_h\psi)$. Therefore, since 
symmetry is important in our analysis, we introduce the modified 
bilinear form
\begin{equation}\label{1.a-h-modified}
\tah(\vv,\eta)=\sum_{z\in Z_h^0} \eta(z)
     \Big ( - \int_{\partial V_z}  (\alb \nabla \vv) \cdot n \, d\sigma  
+ \int_{ V_z} \qbar \vv \,dx \Big ),
\quad \forall\vv \in S_h,\ \eta\in Y_h,
\end{equation}
where, for $ z \in \K$, $ \K \in \T_h$, 
$\alb(z)=\all(z_\K)$ and  $\qbar(z)=  \qc(z_\K)$, with
$z_\K$ the barycenter of $\K$. 
This choice of
$\tah(\cdot,\cdot)$ leads to the  finite volume element
method, to find $\tu_h(t) \in S_h $ such that
\begin{equation}\label{1.fv-general-22}
 \lla \tu_{h,t}, \chi \rra + \tah(\tu_h, \I_h\chi) =0, 
\quad \forall \chi \in S_h,\for t\ge0, \with  \tu_h(0)=v_h,
\end{equation}
and for this the desired analogues of the estimates 
\eqref{lm-estimates-1} are established in Theorems
\ref{gen-smooth}--\ref{gen-sym}.

The following is an outline of the paper. In Section
\ref{sec:prelim}, we introduce notation and give some preliminary
material needed for the analysis of the finite volume element method.
Further, we derive smooth and nonsmooth initial data  estimates for
the gradient of the error in the standard Galerkin method. In Section 
\ref{sec:smooth} we derive the error estimates \eqref{lm-estimates-1} 
discussed above, under the different assumptions on smoothness of data and 
the triangulations $\{\T_h\}$. In Section \ref{sec:specialmeshes} we show that assumption
\eqref{higher_error} is valid for symmetric meshes, and discuss
the corresponding properties for almost symmetric and piecewise almost symmetric meshes. 
In Section \ref{sec:counterexample} we present two nonsymmetric triangulations
in two space dimensions for which optimal order $L_2$--convergence for
nonsmooth data does not hold. In Section \ref{fully} we
consider briefly the application 
to the fully discrete backward Euler and
Crank--Nicolson  finite volume  methods. Finally, Section
\ref{sec:general} contains the  extension
 of Section \ref{sec:smooth}
to more general parabolic equations.


\section{Preliminaries}\label{sec:prelim}


In this section we show a smoothing property for the finite volume element method, and discuss 
the quadrature associated with this method. We also derive some estimates for the gradient of 
the error in the standard Galerkin finite element method which will be needed later.

We first recall that for the standard Galerkin method, one may introduce the discrete 
Laplacian $\De_h: S_h\to S_h$ by
\[
-(\De_h\psi,\chi)=(\nabla
\psi,\nabla\chi),\quad\forall\psi,\chi\in S_h,
\]
and write the  problem \eqref{fem}  as
\begin{equation}\label{fem-operator}
u_{h,t}-\De_hu_h=0, \for t\ge0,\with u_h(0)=v_h.
\end{equation}
Letting $\{\la^h_j\}_{j=1}^{N_h}$, $\{\phi_j^h\}_{j=1}^{N_h}$, where
$N_h=\dim S_h$, denote
the eigenvalues, in increasing order, and the corresponding
 eigenfunctions of $-\De_h$, orthonormal with respect to $(\cdot,
\cdot)$, we have for the solution operator
$E_h(t)=e^{\De_h t}$ of \eqref{fem-operator}, by
eigenfunction expansion,
\begin{equation*}
u_h(t)=E_h(t)v_h=\sum_{j=1}^{N_h}e^{-\la^h_jt}(v_h,\phi_j^h)\phi_j^h,
 \for t\ge0 .
\end{equation*}
The following smoothing property analogous to \eqref{1.smooth}
holds for $v_h\in S_h$ and $t>0$
\begin{equation}\label{fem-reg}
\|\nabla^pD_t^\ell E_{h}(t)v_h\|\le Ct^{-\ell-(p-q)/2}\|\nabla^q
v_h\|,\quad \ell\ge0,\ p,q=0,1,\ 2\ell+p\ge q,
\end{equation}
with $D_t=\partial/\partial t$.

Turning to the finite volume  method \eqref{vvv}, we now introduce the
discrete Laplacian $\wtDe_h:\,S_h\to S_h,$ corresponding to the
inner product $\lla\cdot,\cdot\rra$ in \eqref{1.fvip}, by
\begin{equation}\label{bDe_h-def}
-\lla\wtDe_h\psi,\chi\rra=(\nabla\psi,\nabla\chi),
\quad\forall\psi,\chi\in S_h.
\end{equation}
The finite volume method \eqref{vvv} can then be written in operator
form as
\begin{equation}\label{lumped}
\tu_{h,t}-\wtDe_h\tu_h=0, \for t \ge0, \with \tu_h(0)=v_h.
\end{equation}
For the solution operator $\wtE_h(t)=e^{\wtDe_h t}$  of
\eqref{lumped} we have
\begin{equation}\label{bEh}
\tu_h(t)=\wtE_h(t)v_h=\sum_{j=1}^{N_h}e^{-\tlj t}\lla
v_h,\tpj\rra\tpj, \for t\ge0,
\end{equation}
where $\{\tlj\}_{j=1}^{N_h}$ and $\{\tpj\}_{j=1}^{N_h}$ are the
eigenvalues, in increasing order, and the corresponding
eigenfunctions, orthonormal with
respect to $\lla\cdot,\cdot\rra$, of the positive definite operator
$-\wtDe_h$. For $\wtE_h(t)$  the following analogue of \eqref{fem-reg}
holds, cf. \cite[Lemma 2.1]{clt11}.
\begin{lemma}\label{bEh-estimates} For  $\wtE_h$
defined by \eqref{bEh} we have, for $v_h\in S_h$ and $t >0$
\[
\|\nabla^pD_t^\ell \wtE_{h}(t)v_h\|\le Ct^{-\ell-(p-q)/2}\|\nabla^q
v_h\|, \, 
\ell\ge0,\ p,q=0,1,\ 2\ell+p\ge q.
\]
\end{lemma}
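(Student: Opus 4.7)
The plan is to mimic the standard spectral-theoretic proof of \eqref{fem-reg}, working this time with the eigensystem $\{\tlj,\tpj\}$ of $-\wtDe_h$ in the discrete inner product $\lla\cdot,\cdot\rra$ and converting to $\|\cdot\|$ and $\|\nabla\cdot\|$ via the two identities established in the introduction.

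\smallskip

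The first step is to differentiate \eqref{bEh} term by term. Since $\wtDe_h$ commutes with $\wtE_h(t)$, we have $D_t^\ell \wtE_h(t)v_h = \wtDe_h^\ell \wtE_h(t)v_h$, hence
\begin{equation*}
D_t^\ell \wtE_h(t)v_h=\sum_{j=1}^{N_h}(-\tlj)^\ell e^{-\tlj t}\lla v_h,\tpj\rra\,\tpj.
\end{equation*}
Writing $c_j=\lla v_h,\tpj\rra$, Parseval in the inner product $\lla\cdot,\cdot\rra$ gives $\tribar D_t^\ell \wtE_h(t)v_h\tribar^2=\sum_j \tlj^{2\ell} e^{-2\tlj t}|c_j|^2$. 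Next I would use the two identities that tie the norms to the eigenvalues: the norm equivalence $\|\chi\|\le C\tribar\chi\tribar$ handles the case $p=0$, while the defining relation \eqref{bDe_h-def} gives $\|\nabla\chi\|^2=-\lla\wtDe_h\chi,\chi\rra$, so that $\|\nabla D_t^\ell \wtE_h(t)v_h\|^2=\sum_j \tlj^{2\ell+1}e^{-2\tlj t}|c_j|^2$ for the case $p=1$. In the same way, $\tribar v_h\tribar^2=\sum_j |c_j|^2$ and $\|\nabla v_h\|^2=\sum_j \tlj |c_j|^2$, which, together with the equivalence of norms, provides the right-hand sides for $q=0$ and $q=1$ respectively.

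\smallskip

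With these expansions in hand, the stated bound reduces, in every one of the finitely many cases $(\ell,p,q)$ allowed by $2\ell+p\ge q$, to showing
\begin{equation*}
\sum_j \tlj^{2\ell+p}\,e^{-2\tlj t}|c_j|^2 \le C\,t^{-2\ell-(p-q)}\sum_j \tlj^{q}|c_j|^2,
\end{equation*}
which in turn follows termwise from the elementary inequality $x^{s}e^{-2xt}\le C_s t^{-s}$ for $x>0$, $t>0$ and $s=2\ell+p-q\ge0$. This is exactly the same reduction used to prove \eqref{fem-reg}; the only nontrivial ingredients are spectral expansion, positivity of the $\tlj$, and the two identifications of $\|\cdot\|$ and $\|\nabla\cdot\|$ with spectral sums.

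\smallskip

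I do not expect a real obstacle here: the argument is entirely parallel to the one for the standard Galerkin semigroup $E_h(t)$, with $(\cdot,\cdot)$ replaced by $\lla\cdot,\cdot\rra$. The only point that requires a little care is the switch between $\tribar\cdot\tribar$ and $\|\cdot\|$ on the left-hand side when $p=0$, and between $\tribar\cdot\tribar$ and $\|\cdot\|$ on the right-hand side when $q=0$; both are handled by the uniform norm equivalence recorded before \eqref{1.fvip}. Everything else is the standard calculus bound $x^{s}e^{-xt}\le C_s t^{-s}$.
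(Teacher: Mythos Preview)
Your proposal is correct and is essentially the paper's own argument: the paper introduces $\widetilde G_h=(-\wtDe_h)^{1/2}$ to write the four cases $p,q\in\{0,1\}$ in a single line, but the content is exactly the spectral expansion, the identity $\|\nabla\chi\|^2=\lla(-\wtDe_h)\chi,\chi\rra=\tribar\widetilde G_h\chi\tribar^2$, the norm equivalence $\tribar\cdot\tribar\sim\|\cdot\|$ for $p=0$ and $q=0$, and the elementary bound $\lambda^{s}e^{-2\lambda t}\le C_st^{-s}$ that you describe. (One trivial slip: the norm equivalence is stated just \emph{after} \eqref{1.fvip}, not before.)
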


\begin{proof} Introducing the square root $\widetilde
G_h=(-\wtDe_h)^{1/2}:S_h\to S_h$, of $-\wtDe_h$, we get
\[
\|\nabla v_h\|^2=\lla(-\wtDe_h)v_h,v_h\rra=\sum_{j=1}^{N_h}\tlj \lla v_h,\tpj\rra^2
=\tribar\widetilde G_h
v_h\tribar^2 .
\]
Since the norms $\tribar\cdot\tribar$ and $\|\cdot\|$ are equivalent
on $S_h$ we find for $t>0$
\begin{align*}
\ \ \|\nabla^pD_t^\ell  \wtE_{h}(t)v_h\|^2 &\le C\tribar\widetilde
G_h^pD_t^\ell \wtE_{h}(t)v_h\tribar^2
=C\sum_{j=1}^{N_h}(\tlj)^{2\ell+p-q}e^{-2\tlj t} (\tlj)^q\lla
v_h,\tpj\rra^2
\\
&\le C\,t^{-(2\ell+p-q)}\tribar\widetilde G_h^q v_h\tribar^2 \le
C\,t^{-(2\ell+p-q)}\|\nabla^q v_h\|^2. 
\qedhere
\end{align*}
\end{proof}
The quadrature error functional $\vep_h(\cdot,\cdot)$ defined by
\eqref{veph-def} has an important 
role in our analysis below. For this reason we recall the following 
lemma, cf. \cite{chatzipa-l-thomee04}.
\begin{lemma}\label{quad-error-lemma} For  the
error functional $\vep_h$, defined by \eqref{veph-def}, we have 
\begin{equation*}
|\vep_h(f, \psi)| \le C h^{p+q} \| \nabla^p f\|\,
\|\nabla^q\psi\|,\quad \forall 
f\in H^1,\ \psi\in S_h,  \andy\,  p,q=0,1.
\end{equation*}
\end{lemma}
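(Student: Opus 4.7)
The plan is to localize the quadrature error to a single triangle $\K \in \T_h$, derive a cancellation identity that reduces $\vep_h(f,\psi)$ to a form involving mean values, and then apply Cauchy--Schwarz together with standard Poincaré/approximation estimates.

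The starting point is the elementwise identity
\[
\vep_h(f,\psi)=\sum_{\K\in\T_h}\int_\K f\,(\I_h\psi-\psi)\,dx,
\]
which comes directly from the definition \eqref{veph-def}, since $(\I_h\psi)|_{\K_z}=\psi(z)$ for each vertex $z\in Z_h(\K)$. The key geometric observation is that the three subregions $\K_z$ arising from joining the barycenter $z_\K$ to the edge midpoints all have equal area $|\K|/3$. Hence, when $\psi$ is linear on $\K$,
\[
\int_\K \I_h\psi\,dx=\tfrac{|\K|}{3}\sum_{z\in Z_h(\K)}\psi(z)=\int_\K \psi\,dx,
\]
where the second equality is the elementary quadrature identity for linears on a triangle. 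Therefore $\int_\K(\I_h\psi-\psi)\,dx=0$, and we may replace $f$ by $f-\bar f_\K$ in the localized identity, where $\bar f_\K=|\K|^{-1}\int_\K f\,dx$, obtaining
\[
\vep_h(f,\psi)=\sum_{\K\in\T_h}\int_\K (f-\bar f_\K)(\I_h\psi-\psi)\,dx.
\]

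Next I would establish the two building-block estimates on each triangle: first, the Poincaré inequality
\[
\|f-\bar f_\K\|_{L_2(\K)}\le C h_\K\|\nabla f\|_{L_2(\K)},
\]
and second, the oscillation bound for $\I_h\psi$. Since on each subregion $\K_z$ we have $\psi-\I_h\psi=\psi(x)-\psi(z)=\nabla\psi\cdot(x-z)$ for $\psi$ linear on $\K$, it follows that
\[
\|\I_h\psi-\psi\|_{L_2(\K)}\le C h_\K\|\nabla\psi\|_{L_2(\K)},
\]
and also the $L_2$-stability bound $\|\I_h\psi-\psi\|_{L_2(\K)}\le C\|\psi\|_{L_2(\K)}$, obtained from an inverse estimate on the finite-dimensional space of linears on $\K$.

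Combining these with Cauchy--Schwarz on each $\K$ and summing covers all four cases. For $p=q=1$ we use the Poincaré bound on $f$ and the gradient bound on $\I_h\psi-\psi$, producing the $h^2$ factor. For $p=1,q=0$ we use Poincaré on $f$ and the stability bound on $\I_h\psi-\psi$, yielding $h$. For $p=0,q=1$ we bound $f$ by itself and use the gradient bound on $\I_h\psi-\psi$ (here the cancellation identity is not needed and we may return to the raw form $\int_\K f(\I_h\psi-\psi)\,dx$). Finally, for $p=q=0$ a direct Cauchy--Schwarz argument with the stability of $\I_h$ gives the $O(1)$ bound.

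The only subtle step is the cancellation $\int_\K(\I_h\psi-\psi)\,dx=0$, which depends crucially on the construction of the control volumes via the barycenter rather than, say, the circumcenter; everything else is a routine assembly of Poincaré and inverse estimates.
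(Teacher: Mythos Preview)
Your proposal is correct and follows essentially the same approach as the paper's proof: the paper observes that $J_h\psi-\psi$ is orthogonal to the piecewise constants (exactly your cancellation $\int_\K(J_h\psi-\psi)\,dx=0$), replaces $f$ by $f-\bar P_hf$ with $\bar P_h$ the $L_2$-projection onto piecewise constants (which on each $\K$ is precisely your mean $\bar f_\K$), and then invokes $\|J_h\psi-\psi\|\le Ch\|\nabla\psi\|$ and $\|f-\bar P_hf\|\le Ch\|\nabla f\|$. Your version is simply the elementwise unpacking of the paper's argument, with the added (correct) remark that the equal-area decomposition via the barycenter is what makes the cancellation work.
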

\begin{proof}
Since $\int_\tau(J_h\psi-\psi)\,dx=0$ for $\psi$ linear in $\tau$,
for any $\tau\in\T_h$, see \cite{cl00},
we have that $J_h\psi-\psi$ is orthogonal to $\bar S_h$, the set
of piecewise constants on $\T_h$. Hence
\[
\vep_h(f,\psi)=(f,J_h\psi-\psi)=(f-\bar P_hf,J_h\psi-\psi),
\]
where $\bar P_h$ is the orthogonal projection onto $\bar S_h$.
The lemma now easily follows since
$\|J_h\psi-\psi\|\le Ch\|\nabla\psi\|$ and
$\|\bar P_hf-f\|\le Ch\|\nabla f\|$.
\end{proof}

The following  estimate holds for the quadrature error operator
$Q_h$ in \eqref{q_h-def}.
\begin{lemma}\label{q_h-stability}
 Let $\wtDe_h$ and $Q_h$ be the operators defined by
\eqref{bDe_h-def} and \eqref{q_h-def}. Then
\begin{equation*}
\|\nabla Q_h\chi\|+h\|\wtDe_h Q_h\chi\|\le
Ch^{p+1}\|\nabla^p \chi\|,\quad\forall\chi\in S_h,\  p=0,1.
\end{equation*}
\end{lemma}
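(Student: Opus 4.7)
The plan is to prove the two bounds separately, both by testing the defining equation of $Q_h$ against a well-chosen element of $S_h$ and applying Lemma \ref{quad-error-lemma}. The key mechanism is that $Q_h$ is defined as a ``Ritz-like'' perturbation whose size is controlled by the quadrature error $\vep_h$, so both estimates will reduce to bounds on $\vep_h$ for the appropriate choices of arguments.

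For the gradient bound, I would set $\chi$ in the definition \eqref{q_h-def} equal to $Q_h\psi$ itself, obtaining
\[
\|\nabla Q_h\psi\|^2=(\nabla Q_h\psi,\nabla Q_h\psi)=\vep_h(\psi,Q_h\psi).
\]
Applying Lemma \ref{quad-error-lemma} with the indices $p$ (on the first slot) and $1$ (on the second slot) gives
\[
|\vep_h(\psi,Q_h\psi)|\le Ch^{p+1}\|\nabla^p\psi\|\,\|\nabla Q_h\psi\|,
\]
which yields $\|\nabla Q_h\psi\|\le Ch^{p+1}\|\nabla^p\psi\|$ after dividing by $\|\nabla Q_h\psi\|$, uniformly in $p\in\{0,1\}$.

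For the second term, I would combine the definition of $\wtDe_h$ in \eqref{bDe_h-def} with the definition of $Q_h$ to rewrite, for any $\eta\in S_h$,
\[
-\lla\wtDe_h Q_h\psi,\eta\rra=(\nabla Q_h\psi,\nabla\eta)=\vep_h(\psi,\eta).
\]
Choosing $\eta=\wtDe_h Q_h\psi$ and taking absolute values gives
\[
\tribar\wtDe_h Q_h\psi\tribar^2\le|\vep_h(\psi,\wtDe_h Q_h\psi)|.
\]
Now I would apply Lemma \ref{quad-error-lemma} with indices $p$ on the first slot and $q=0$ on the second slot, and then use the norm equivalence $\|\cdot\|\sim\tribar\cdot\tribar$ on $S_h$ to cancel one power of $\|\wtDe_h Q_h\psi\|$, arriving at $\|\wtDe_h Q_h\psi\|\le Ch^p\|\nabla^p\psi\|$. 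Multiplying by $h$ and adding to the first estimate yields the claim.

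There is no real obstacle here: the proof is essentially a two-line Galerkin energy argument in each case, and everything I need (the defining relations, the quadrature bound in Lemma \ref{quad-error-lemma}, and the norm equivalence $\|\cdot\|\sim\tribar\cdot\tribar$) is already in hand. The only point requiring mild care is keeping track of which slot of $\vep_h$ carries which derivative, since the statement asks for a uniform bound covering both $p=0$ and $p=1$.
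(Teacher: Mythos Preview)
Your proof is correct and follows essentially the same route as the paper's: test \eqref{q_h-def} with $Q_h\chi$ for the gradient bound, test with $\wtDe_h Q_h\chi$ (via \eqref{bDe_h-def}) for the second bound, then apply Lemma~\ref{quad-error-lemma} with $q=1$ and $q=0$ respectively, finishing with the equivalence $\|\cdot\|\sim\tribar\cdot\tribar$ on $S_h$. The only cosmetic difference is the swapped roles of the letters $\chi$ and $\psi$.
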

\begin{proof}
By \eqref{q_h-def} and Lemma \ref{quad-error-lemma}, with $\psi=Q_h\chi$ and
$q=1$,    it follows easily  that
 \begin{equation*}\label{q_h-estimates}
\|\nabla Q_h\chi\|^2 =\vep_h(\chi,Q_h\chi)\le
Ch^{p+1}\|\nabla^p\chi\|\, \|\nabla Q_h\chi\|, \for p=0,1,
 \end{equation*}
which shows the desired estimate for $\|\nabla Q_h\chi\|$. Also, by the
definition of $\wtDe_h$,  Lemma \ref{quad-error-lemma} with $q=0$ shows, for $p=0,1,$
 \[\tribar\wtDe_h Q_h\chi\tribar^2
=-(\nabla Q_h\chi,\nabla\wtDe_h Q_h\chi)=-\vep_h(\chi,\wtDe_h Q_h\chi)\le
Ch^p\|\nabla^p \chi\|\,\|\wtDe_h Q_h\chi\|.
\]
Since the norms $\tribar\cdot\tribar$ and 
$\|\cdot\|$ are equivalent
on $S_h$, this implies the  bound for remaining term $\|\wtDe_h Q_h\chi\|$.
\end{proof}

In addition to the orthogonal $L_2$--projection $P_h$, 
our error analysis will use the Ritz projection $R_h:H^1_0\to S_h$,
defined by
\[
(\nabla R_hw,\nabla\chi)=(\nabla w,\nabla\chi),\quad \forall \chi\in
S_h.
\]
It is well--known
that $R_h$ satisfies
\begin{equation}\label{rh-bound}
\|R_h w-w\|+h\|\nabla(R_hw-w)\|\le Ch^q| w|_q, \for w\in \dot H^q,\
q=1,2.
\end{equation}

We close with some estimates for the gradient of the
error, slightly generalizing those of \cite[Theorem 2.1]{clt11}.

\begin{theorem}\label{SG-H1}
Let $u$  and $u_h$ be the solutions of \eqref{eq1} and \eqref{fem-operator}. Then, for $t>0$,
\[
\|\nabla(u_h(t) - u(t))\| \le
\begin{cases}
Ch|v|_2,  &\ifff\ \|\nabla(v_h-v)\|\le Ch|v|_2,
\\
Cht^{-1/2}|v|_1,\  &\ifff\ \|v_h-v\|\le Ch|v|_1,
\\
Cht^{-1}\|v\|,\  &\ifff\ v_h=P_hv.
\end{cases}
\]
\end{theorem}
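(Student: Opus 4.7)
The strategy is the classical splitting $u_h(t)-u(t)=\theta(t)+\rho(t)$, with
$\rho=R_h u-u$ and $\theta=u_h-R_h u\in S_h$. The $\rho$-part is immediate: \eqref{rh-bound} gives $\|\nabla\rho(t)\|\le Ch|u(t)|_2$, and combining this with the smoothing estimate $|E(t)v|_2\le Ct^{-(2-q)/2}|v|_q$ from \eqref{1.smooth} directly yields the three right-hand-side bounds $Ch|v|_2$, $Cht^{-1/2}|v|_1$, $Cht^{-1}\|v\|$ on $\|\nabla\rho\|$.

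For $\theta$, I would derive the error equation using $\Delta_h R_h u=P_h\Delta u=P_h u_t$ and $u_{h,t}=\Delta_h u_h$, obtaining $\theta_t-\Delta_h\theta=(P_h-R_h)u_t$ in $S_h$ with $\theta(0)=v_h-R_h v$. Duhamel's formula yields
\[
\theta(t)=E_h(t)\theta(0)+\int_0^t E_h(t-s)(P_h-R_h)u_t(s)\,ds ,
\]
and the gradient-smoothing estimate $\|\nabla E_h(\tau)\psi\|\le C\tau^{-1/2}\|\psi\|$ from \eqref{fem-reg} is the main tool throughout. In case~1 (smooth data), $\|\nabla\theta(0)\|\le\|\nabla(v_h-v)\|+\|\nabla(v-R_h v)\|\le Ch|v|_2$ combined with the $H^1$-stability of $E_h$ controls the first term, while the Duhamel integral is closed via $\|(P_h-R_h)u_t(s)\|\le Ch|u_t(s)|_1\le Chs^{-1/2}|v|_2$ and the Beta-function identity $\int_0^t(t-s)^{-1/2}s^{-1/2}\,ds=\pi$.

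The main obstacle is case~3 ($v\in L_2$, $v_h=P_h v$), because the initial contribution $\|\nabla E_h(t)(P_h-R_h)v\|$ only yields $Ct^{-1/2}\|v\|$. For $t>h^2$ I would exploit the semigroup property and write
\[
\theta(t)=E_h(t/2)\theta(t/2)+\int_{t/2}^t E_h(t-s)(P_h-R_h)u_t(s)\,ds ,
\]
then invoke the already-established $L_2$ nonsmooth bound \eqref{1.nsm}, $\|\theta(t/2)\|\le Ch^2 t^{-1}\|v\|$, to obtain $\|\nabla E_h(t/2)\theta(t/2)\|\le Ct^{-1/2}\cdot Ch^2 t^{-1}\|v\|=Ch^2 t^{-3/2}\|v\|\le Cht^{-1}\|v\|$; the integral on $(t/2,t)$ is bounded similarly using $\|(P_h-R_h)u_t(s)\|\le Ch^2|u_t(s)|_2\le Ch^2 t^{-2}\|v\|$ for $s\sim t$, again yielding $Ch^2 t^{-3/2}\|v\|$. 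For $t\le h^2$ the trivial bound $\|\nabla(u_h-u)\|\le Ct^{-1/2}\|v\|\le Cht^{-1}\|v\|$ suffices, from the gradient smoothing of both $E(t)$ and $E_h(t)$. Case~2 then follows by interpolation between cases~1 and~3 for the choice $v_h=P_h v$, and the general $v_h$ with $\|v_h-v\|\le Ch|v|_1$ is recovered by noting $\|\nabla E_h(t)(v_h-P_h v)\|\le Ct^{-1/2}\|v_h-P_h v\|\le Cht^{-1/2}|v|_1$. The key technical point is aligning the $h^2 t^{-3/2}$ rate from the semigroup splitting with the target $h t^{-1}$ via the already-known quadratic $L_2$ nonsmooth estimate at time $t/2$.
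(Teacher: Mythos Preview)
Your overall architecture is sound and is essentially what the proof in \cite{clt11} (to which the paper defers) does: the $\theta+\rho$ splitting, the error equation $\theta_t-\Delta_h\theta=(P_h-R_h)u_t$, and Duhamel combined with the smoothing of $E_h$. Your treatments of Case~1 and Case~3 are correct; in particular the semigroup trick at $t/2$ together with the already known $L_2$ nonsmooth estimate \eqref{1.nsm} is a clean way to get $Ch^2t^{-3/2}\le Cht^{-1}$ for $t\ge h^2$, with the trivial bound covering $t\le h^2$. The paper's own proof is much shorter only because it cites \cite[Theorem~2.1]{clt11} for the choice $v_h=R_hv$ and then handles the more general $v_h$ by bounding $\nabla E_h(t)(v_h-R_hv)$ via the stability and smoothing in \eqref{fem-reg}; you are in effect reconstructing the cited argument.

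There is, however, a genuine gap in your handling of Case~2. Your interpolation between Cases~1 and~3 is carried out for the operator $v\mapsto\nabla(E_h(t)P_hv-E(t)v)$, so the $\dot H^2$ endpoint requires Case~1 with $v_h=P_hv$, i.e., $\|\nabla(P_hv-v)\|\le Ch|v|_2$. This is \emph{not} available here: it is essentially the $H^1$--stability of $P_h$, which the theorem does not assume (the paper invokes it only later, in Theorem~\ref{lumped-L2-norm-msmooth}, under an extra hypothesis). The fix is simply to avoid interpolation and argue directly, as the paper implicitly does by reducing to $v_h=R_hv$: with $\theta(0)=v_h-R_hv$ one has $\|\theta(0)\|\le Ch|v|_1$, hence $\|\nabla E_h(t)\theta(0)\|\le Ct^{-1/2}\|\theta(0)\|\le Cht^{-1/2}|v|_1$, and for the Duhamel integral one splits at $t/2$, integrates by parts on $(0,t/2)$, and uses $\|(P_h-R_h)u(s)\|\le Ch|u(s)|_1\le Ch|v|_1$ together with $\|\nabla E_h'(t-s)\chi\|\le C(t-s)^{-3/2}\|\chi\|$ to close the estimate. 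This direct route needs no assumption on $P_h$ beyond $L_2$--orthogonality.
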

\begin{proof} 
In \cite[Theorem 2.1]{clt11} this was shown with $v_h=R_h v$ in the first two estimates, 
and thus  it remains to bound $\nabla E_h(t)(v_h -R_h v)$. With
$\vth:=v_h -R_h v$ we find easily,
by Lemma \ref{bEh-estimates}, for smooth data, 
 $\|\nabla E_h(t)\vth(0)\|\le\|\nabla\vth(0)\|
\le Ch|v|_2$, and for mildly nonsmooth data, 
 $\|\nabla E_h(t)\vth(0)\|\le Ct^{-1/2}\|\vth(0)\| \le Ct^{-1/2}h|v|_1.$
\end{proof}


\section{Smooth and nonsmooth initial data error estimates}
\label{sec:smooth}

In this section we derive optimal order error estimates for the
finite volume element method \eqref{vvv}, with initial data $v$ in
$\Hdot^2$, $\Hdot^1$ and $L_2$. For $v\in \Hdot^2$, the error estimate
is the same as that for  the standard Galerkin finite element method,
and this is also the case for $v\in \dot H^1$, provided
the family of finite element spaces is quasi--uniform.
In the case  $v\in L_2$, with discrete initial data $v_h=P_hv$, in order 
to derive an optimal order estimate analogous to \eqref{1.nsm},
we need to impose  condition \eqref{higher_error} 
for  the quadrature error
operator $Q_h$. In Section \ref{sec:specialmeshes} we  verify this condition for symmetric 
meshes.
In the general case we are only able to show a
non--optimal order $O(h)$ error bound in $L_2$, whereas for the
gradient of the error an optimal order $O(h)$ bound still holds.

The estimates and their proofs are analogous to those for the lumped mass
method derived in \cite{clt11}, since the operators $\wtE_h$,
$\wtDe_h$ and $Q_h$, defined in Section \ref{sec:prelim}, have
properties similar to those of the corresponding operators for the lumped mass 
method. References to \cite{clt11} will therefore be given  in some of the proofs below.
We begin with smooth initial data, $v\in \dot H^2$.


\begin{theorem}\label{lumped-L2-norm-smooth}
Let $u$  and $\tu_h$  be the solutions of \eqref{eq1} and 
\eqref{lumped}. Then 
\[
\| \tu_h(t)-u(t)\| 
\le Ch^2|v|_2, ~~~\ifff\ \|v_h-v\|\le Ch^2|v|_2, \for\, t \ge 0.
\]
\end{theorem}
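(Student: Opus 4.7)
The plan is to prove the estimate by comparison with a well-chosen standard Galerkin solution, using the Ritz projection of $v$ as the reference initial datum rather than $v_h$ itself. Specifically, let $w_h$ solve \eqref{fem-operator} with $w_h(0) = R_h v$, and split
\[
\tilde u_h(t) - u(t) = \zeta(t) + \bigl(w_h(t)-u(t)\bigr), \qquad \zeta := \tilde u_h - w_h.
\]
By \eqref{rh-bound} we have $\|R_h v - v\| \le Ch^2 |v|_2$, so the standard Galerkin smooth-data estimate \eqref{1.sm} applies and gives $\|w_h(t)-u(t)\| \le Ch^2|v|_2$ for all $t\ge 0$; it remains to bound $\|\zeta(t)\|$.

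Subtracting \eqref{fem} (with $u_h$ replaced by $w_h$) from \eqref{vvv}, and using the identity $\langle \chi,\psi\rangle = (\chi,\psi) + \varepsilon_h(\chi,\psi)$ exactly as in the derivation of \eqref{1.del}, the difference $\zeta$ satisfies
\[
\langle \zeta_t,\chi\rangle + (\nabla \zeta,\nabla \chi) = -\varepsilon_h(w_{h,t},\chi), \qquad \forall \chi\in S_h,
\]
with initial datum $\zeta(0) = v_h - R_h v$. By the hypothesis and \eqref{rh-bound}, $\|\zeta(0)\| \le \|v_h - v\| + \|R_h v - v\| \le Ch^2|v|_2$. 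Testing with $\chi = \zeta$ and applying Lemma \ref{quad-error-lemma} with $p = q = 1$ yields $|\varepsilon_h(w_{h,t},\zeta)| \le Ch^2\|\nabla w_{h,t}\|\,\|\nabla \zeta\|$, and Young's inequality then gives
\[
\tfrac{d}{dt}\tribar \zeta\tribar^2 + \|\nabla\zeta\|^2 \le Ch^4 \|\nabla w_{h,t}\|^2.
\]

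Integrating in time and using the equivalence of $\tribar\cdot\tribar$ and $\|\cdot\|$ on $S_h$ gives $\|\zeta(t)\|^2 \le C\|\zeta(0)\|^2 + Ch^4\int_0^t\|\nabla w_{h,s}\|^2\,ds$. The remaining integral is controlled by the standard energy identity: since $w_{h,ss} = \Delta_h w_{h,s}$,
\[
\int_0^t \|\nabla w_{h,s}\|^2\,ds = \tfrac12\bigl(\|w_{h,t}(0)\|^2 - \|w_{h,t}(t)\|^2\bigr) \le \tfrac12\|\Delta_h R_h v\|^2 \le \tfrac12|v|_2^2,
\]
where the final step uses $\Delta_h R_h v = P_h(\Delta v)$, an immediate consequence of the Ritz identity. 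Combining these bounds gives $\|\zeta(t)\|\le Ch^2|v|_2$, and together with the Galerkin estimate above this proves the theorem.

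The main delicate point — and the reason for taking $w_h(0) = R_h v$ rather than $v_h$ — is controlling $\int_0^t\|\nabla w_{h,s}\|^2 ds$ uniformly in $t$ by $|v|_2^2$. With the more natural choice $w_h(0) = v_h$ (so that $\zeta(0) = 0$) the analogous bound would require $\|\Delta_h v_h\|\le C|v|_2$, which does not follow from the hypothesis $\|v_h - v\| \le Ch^2|v|_2$ without invoking an inverse inequality and hence quasiuniformity of $\{\T_h\}$. Using $R_h v$ as the comparison initial datum converts this difficulty into the automatic identity $\Delta_h R_h v = P_h\Delta v$, at the modest cost of a nonzero but well-controlled $\zeta(0)$, making the argument work for arbitrary regular families $\{\T_h\}$.
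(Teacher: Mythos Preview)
Your proof is correct but takes a different route from the paper's. Both arguments compare with the standard Galerkin solution started at $R_hv$ and both ultimately rely on the identity $\Delta_h R_h v = P_h\Delta v$ to get the final $|v|_2$ bound; where they diverge is in the treatment of the difference $\zeta$ (the paper's $\delta$). The paper rewrites the error equation in operator form $\delta_t-\wtDe_h\delta=\wtDe_hQ_hu_{h,t}$, invokes Duhamel's principle, and combines the smoothing estimate $\|\wtE_h(t)\wtDe_hQ_h\chi\|\le Ch^2t^{-1/2}\|\nabla\chi\|$ (from Lemmas~\ref{bEh-estimates} and~\ref{q_h-stability}) with the pointwise bound $\|\nabla u_{h,t}(s)\|\le Cs^{-1/2}|v|_2$ to integrate the convolution $\int_0^t(t-s)^{-1/2}s^{-1/2}\,ds$. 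You instead use a direct energy argument: test with $\zeta$, bound $\vep_h(w_{h,t},\zeta)$ via Lemma~\ref{quad-error-lemma} with $p=q=1$, absorb $\|\nabla\zeta\|^2$ by Young's inequality, and close with the energy identity $\int_0^t\|\nabla w_{h,s}\|^2\,ds\le\tfrac12\|\Delta_hR_hv\|^2$. Your approach is more elementary for this single result --- it needs neither $Q_h$ nor the smoothing Lemma~\ref{bEh-estimates} --- while the paper's Duhamel framework is what carries over to the nonsmooth data Theorems~\ref{lumped-L2-norm-msmooth} and~\ref{lumped-nonsmooth-opt} and to Lemma~\ref{lumped-L2-norm-nonsmooth}, where the operator $Q_h$ becomes essential. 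Your closing remark on why $R_hv$ rather than $v_h$ must seed the comparison is exactly right and matches the paper's unspoken rationale.
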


\begin{proof}
Since, by \eqref{1.sm}, the corresponding error bound holds for the solution $u_h$ of
the standard Galerkin method,  
it suffices to consider the difference $\de=\tu_h-u_h $. 
Also, by the stability estimates of Lemma \ref{bEh-estimates}, we may assume that $v_h=R_hv$.
By the definition \eqref{q_h-def} of $ Q_h$, 
$\de$  satisfies \eqref{1.del}, and hence
\begin{equation}\label{delta-eq}
\de_t-\wtDe_h\de=\wtDe_hQ_hu_{h,t}, \for t\ge0, \with \de(0)=0,
\end{equation}
where $u_h$ is the solution of \eqref{fem}. By Duhamel's principle this shows
\begin{equation}\label{delta-sol}
\delta(t)=\int_0^t\wtE_h(t-s)\wtDe_hQ_hu_{h,t}(s)\,ds.
\end{equation}
Using the fact that $\wtE_h(t)\wtDe_h=D_t\wtE_h(t)$, and Lemmas
\ref{bEh-estimates} and \ref{q_h-stability}, we easily get
\begin{equation}
\begin{split}
\label{bEh-bDeh-Qh}
\|\wtE_h(t) \wtDe_hQ_h\chi\| 
 \le
& Ct^{-1/2} \|\nabla Q_h\chi\| 
\le Ch^2t^{-1/2}\|\nabla\chi\|,\for \chi\in S_h,
\end{split}
\end{equation}
and hence
\[
\|\de(t)\|
\le Ch^2\int_0^t(t-s)^{-1/2} \|\nabla u_{h,t}(s)\| \,ds.
\]
Here, since $\De_hR_h=P_h\De$, we obtain, by first applying Lemma
\ref{bEh-estimates},
\[
\|\nabla u_{h,t}(s)\| \le Cs^{-1/2} \| u_{h,t}(0)\|= Cs^{-1/2} \| \De_h R_hv\| 
\le Cs^{-1/2} \| \De v\|
= Cs^{-1/2} |v|_2,
\]
and hence
\[
\|\de(t)\| 
\le Ch^2\int_0^t(t-s)^{-1/2} s^{-1/2}\,ds \,|v|_2 =C\,h^2|v|_2,
\]
which completes  the proof.
\end{proof}

We now consider mildly nonsmooth initial data, $v\in \dot H^1$.
Here we shall need to assume the stability of $P_h$ in $\dot H^1$, or 
$\|\nabla P_h w\|\le C|w|_1 $, which does not hold for arbitrary  families of triangulations. 
However, a  sufficient 
condition  for such  stability of $P_h$ is the global quasi--uniformity   of $\{\T_h\}$.
Indeed,  this assumption implies  the inverse inequality
$\| \nabla \chi \| \le C h^{-1} \|\chi\|$, which combined with the error bound 
$\|R_h w- w\|\le Ch| w |_1$, shows  the desired stability of $P_h$.

\begin{theorem}\label{lumped-L2-norm-msmooth}
Let $u$ and $\tu_h$  be the solutions of \eqref{eq1} and 
\eqref{lumped}. Then for $t>0$
\[
\| \tu_h(t)-u(t)\|\le Ch^2t^{-1/2}|v|_1,  \ \ifff v_h=P_hv \andy\  
\|\nabla P_hv\|\le C|v|_1. 
\]
\end{theorem}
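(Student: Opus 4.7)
The plan is to follow exactly the template of Theorem~\ref{lumped-L2-norm-smooth}. Since the hypothesis $v_h=P_hv$ is precisely what enables the standard Galerkin bound \eqref{1.hsm}, by the triangle inequality it suffices to estimate $\de=\tu_h-u_h$. This $\de$ satisfies \eqref{delta-eq} with $\de(0)=0$, so Duhamel's formula gives
\[
\de(t)=\int_0^t\wtE_h(t-s)\,\wtDe_hQ_hu_{h,t}(s)\,ds.
\]
The standard inputs are the kernel bound \eqref{bEh-bDeh-Qh}, the energy bound $\|\nabla u_h(s)\|\le\|\nabla v_h\|\le C|v|_1$, and the smoothing bound $\|\nabla u_{h,t}(s)\|\le Cs^{-1}|v|_1$, which follows from \eqref{fem-reg} with $\ell=p=q=1$ together with the stability hypothesis $\|\nabla P_hv\|\le C|v|_1$.

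The main obstacle is the non-integrable $s^{-1}$ singularity of $\|\nabla u_{h,t}(s)\|$ at the origin: a naive combination with \eqref{bEh-bDeh-Qh} gives the integrand $Ch^2(t-s)^{-1/2}s^{-1}|v|_1$, whose integral over $[0,t]$ diverges. My plan is to split the integral at $s=t/2$. On $[t/2,t]$ the naive route works, since $s^{-1}\le 2/t$ and $\int_{t/2}^t(t-s)^{-1/2}\,ds=\sqrt{2t}$ combine to yield $Ch^2t^{-1/2}|v|_1$.

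On $[0,t/2]$ I would remove the singularity by writing $u_{h,t}=D_su_h$ and integrating by parts in $s$. This produces two boundary terms, $\wtE_h(t/2)\wtDe_hQ_hu_h(t/2)$ and $\wtE_h(t)\wtDe_hQ_hv_h$, each bounded by $Ch^2t^{-1/2}|v|_1$ via \eqref{bEh-bDeh-Qh} applied to $\chi=u_h(t/2)$ and $\chi=v_h$ respectively, together with a remainder of the form $\int_0^{t/2}\wtDe_h^2\wtE_h(t-s)Q_hu_h(s)\,ds$. For this remainder I would apply Lemma~\ref{bEh-estimates} with $(\ell,p,q)=(2,0,1)$ to obtain $\|\wtDe_h^2\wtE_h(t-s)\psi\|\le C(t-s)^{-3/2}\|\nabla\psi\|$, and then Lemma~\ref{q_h-stability} with $p=1$ to get $\|\nabla Q_hu_h(s)\|\le Ch^2|v|_1$. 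Since $(t-s)^{-3/2}$ is integrable on $[0,t/2]$ with $\int_0^{t/2}(t-s)^{-3/2}\,ds=O(t^{-1/2})$, the remainder is also $O(h^2t^{-1/2}|v|_1)$. Adding the three contributions and invoking \eqref{1.hsm} completes the proof.
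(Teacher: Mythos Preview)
Your proposal is correct and follows essentially the same approach as the paper: reduce to $\de=\tu_h-u_h$ via \eqref{1.hsm}, split the Duhamel integral at $s=t/2$, treat $[t/2,t]$ directly using \eqref{bEh-bDeh-Qh} with $\|\nabla u_{h,t}(s)\|\le Cs^{-1}|v|_1$, and on $[0,t/2]$ integrate by parts, bounding the two boundary terms by \eqref{bEh-bDeh-Qh} and the remainder $\int_0^{t/2}\wtDe_h^2\wtE_h(t-s)Q_hu_h(s)\,ds$ via Lemma~\ref{bEh-estimates} with $(\ell,p,q)=(2,0,1)$ and Lemma~\ref{q_h-stability} with $p=1$. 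The paper's proof is identical in structure; it merely records the remainder in the equivalent form $-\int_0^{t/2}D_s\wtE_h(t-s)\wtDe_hQ_hu_h(s)\,ds$ before applying the same lemmas.
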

\begin{proof} 
Since  by  \eqref{1.hsm}, the corresponding error estimate holds for the solution 
$u_h$ of the standard Galerkin method (without the condition on $\nabla P_h$),  
it suffices as above to bound $ \de= \tu_h-u_h$. We use \eqref{delta-sol} to write
\begin{equation}
\label{delta-sol-2} \de(t) =\Bigl\{\int_0^{t/2}+
     \int_{t/2}^t\Bigr\}\wtE_h(t-s)\wtDe_hQ_hu_{h,t}(s)\,ds
     =\de_1(t)
     +\de_2(t).
\end{equation}
Using again \eqref{bEh-bDeh-Qh}, we have, since 
$\|\nabla u_{h,t}(s)\| \le C  s^{-1}\|\nabla P_hv\| \le C s^{-1}\,|v|_1 $, that
\begin{equation*}
\|\de_2(t)\| 
\le Ch^2\int_{t/2}^t (t-s)^{-1/2} \,\|\nabla u_{h,t}(s)\| \,ds
\le Ch^2t^{-1/2}\,|v|_1.
\end{equation*}
Integrating by parts,  we obtain
\begin{equation}\label{term-I}
\de_1(t)=\Bigl[\wtE_h(t-s)\wtDe_hQ_hu_{h}(s)\Bigr]_{0}^{t/2}
     -\int_0^{t/2}D_s\wtE_h(t-s)\wtDe_hQ_hu_{h}(s)\,ds.
\end{equation}
Employing \eqref{bEh-bDeh-Qh}, Lemmas \ref{bEh-estimates} and \ref{q_h-stability}
 we now find, similarly to the above,
\begin{align*}
\|\de_1(t)&\| 
\le  Ch^2t^{-1/2}(\|\nabla u_h(t/2)\|+ \|\nabla P_hv\|)
\\
&+Ch^2\int_{0}^{t/2}(t-s)^{-3/2}\,\|\nabla u_h(s)\|\,ds
  \le Ch^2t^{-1/2}|v|_1.
\end{align*}
Together these estimates complete the proof. 
\end{proof}

The analogous result and its proof also hold for the lumped mass method, which should replace
the case $q=1$ in \cite[Theorem 3.1]{clt11}, since \eqref{1.hsm} does not hold for 
$v_h=R_hv$.

Next, we turn to the nonsmooth initial data error estimate.
\begin{theorem}\label{lumped-nonsmooth-opt}
Let $u$ and $\tu_h$  be the solutions of \eqref{eq1} and  
\eqref{lumped}. If \eqref{higher_error} holds and $v_h =P_hv$, then 
\[
\| \tu_h(t) -  u(t)\| \le Ch^2t^{-1}\| v \|, \for
t>0.
\]
\end{theorem}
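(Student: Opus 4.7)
The plan is to mimic the splitting technique of Theorem \ref{lumped-L2-norm-msmooth}. Since \eqref{1.nsm} with $v_h=P_hv$ gives $\|u_h(t)-u(t)\|\le Ch^2t^{-1}\|v\|$ for the standard Galerkin solution $u_h$, it suffices to bound $\de=\tu_h-u_h$. As in the previous theorems, $\de$ satisfies \eqref{1.del}, so the representation \eqref{delta-sol} gives
\begin{equation*}
\de(t)=\int_0^t\wtE_h(t-s)\wtDe_hQ_hu_{h,t}(s)\,ds=\de_1(t)+\de_2(t),
\end{equation*}
split at $s=t/2$ as in \eqref{delta-sol-2}.

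For $\de_2(t)$ (the integral over $[t/2,t]$), I would reuse the bound \eqref{bEh-bDeh-Qh}, i.e., $\|\wtE_h(r)\wtDe_hQ_h\chi\|\le Ch^2r^{-1/2}\|\nabla\chi\|$, together with the standard Galerkin smoothing estimate $\|\nabla u_{h,t}(s)\|\le Cs^{-3/2}\|v_h\|\le Cs^{-3/2}\|v\|$ from \eqref{fem-reg} (using $L_2$-stability of $P_h$). The integral $\int_{t/2}^t(t-s)^{-1/2}s^{-3/2}\,ds\le Ct^{-1}$ then gives $\|\de_2(t)\|\le Ch^2t^{-1}\|v\|$ without requiring assumption \eqref{higher_error}.

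For $\de_1(t)$, I would integrate by parts in $s$, using $D_s\wtE_h(t-s)=-\wtDe_h\wtE_h(t-s)$, to move the time derivative off $u_h$:
\begin{equation*}
\de_1(t)=\wtE_h(t/2)\wtDe_hQ_hu_h(t/2)-\wtE_h(t)\wtDe_hQ_hv_h+\int_0^{t/2}\wtDe_h^2\wtE_h(t-s)Q_hu_h(s)\,ds.
\end{equation*}
Here the assumption \eqref{higher_error} is essential. Combined with Lemma \ref{bEh-estimates} (with $\ell=1$, $p=q=0$, yielding $\|\wtE_h(r)\wtDe_h\phi\|\le Cr^{-1}\|\phi\|$) and the Galerkin smoothing $\|u_h(t/2)\|\le C\|v\|$, the two boundary terms are each bounded by $Ch^2t^{-1}\|v\|$. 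For the remaining integral, Lemma \ref{bEh-estimates} with $\ell=2$ gives $\|\wtDe_h^2\wtE_h(t-s)\phi\|\le C(t-s)^{-2}\|\phi\|$, and applying \eqref{higher_error} to $\chi=u_h(s)$ together with $\|u_h(s)\|\le C\|v\|$ produces integrand $\le Ch^2(t-s)^{-2}\|v\|$, whose integral over $[0,t/2]$ is $O(h^2t^{-1}\|v\|)$. Combining the estimates for $\de_1$ and $\de_2$ with \eqref{1.nsm} completes the proof.

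The main obstacle is precisely the boundary term $\wtE_h(t)\wtDe_hQ_hv_h$ arising from the integration by parts. Since $v\in L_2$ only, we cannot afford to differentiate $u_h$ in time inside this part of the integral, so the IBP is forced upon us; but then we need an $L_2$ bound on $Q_hv_h$, and Lemma \ref{q_h-stability} combined with the Poincaré inequality gives only $\|Q_h\chi\|\le C\|\nabla Q_h\chi\|\le Ch\|\chi\|$, which is one power of $h$ short. This is exactly what the hypothesis \eqref{higher_error} is designed to supply.
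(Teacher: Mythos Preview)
Your proof is correct and is essentially the same argument the paper uses. The paper simply packages everything except the critical boundary term into Lemma~\ref{lumped-L2-norm-nonsmooth} (proved by reference to \cite[Theorem 4.1]{clt11}), which states that $\|\tu_h(t)-u(t)+\wtE_h(t)\wtDe_hQ_hv_h\|\le Ch^2t^{-1}\|v\|$; the theorem then follows by bounding the isolated term via \eqref{higher_error} exactly as you do. Your splitting at $t/2$, integration by parts, and treatment of the three pieces of $\de_1$ reproduce precisely what the proof of that lemma (in \cite{clt11}) consists of.
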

\begin{proof} This  follows easily from the fact that for 
$Q_h$ satisfying  \eqref{higher_error} we have, 
\begin{equation}\label{opt-assumption}
\|\wtE_h(t)\wtDe_hQ_h P_h v\|\le Ct^{-1}\|Q_h P_h v\|\le
Ch^2t^{-1}\|v\|,  \for t>0.
\end{equation}
This inequality is the necessary and sufficient condition for 
desired bound 
to hold by the 
following lemma, which is proved in the same way as
\cite[Theorem 4.1]{clt11}.
\end{proof}
\begin{lemma}
\label{lumped-L2-norm-nonsmooth} Let $u$ and  $\tu_h$  be the solutions of
\eqref{eq1} and 
\eqref{lumped}.  Then 
\begin{equation*}
\| \tu_h(t) -  u(t)+\wtE_h(t)\wtDe_hQ_h v_h\| \le Ch^2t^{-1}\| v
\|,\  \ifff v_h =P_h v, \for  t>0,.
\end{equation*}
\end{lemma}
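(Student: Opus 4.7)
The plan is to decompose
\[
\tu_h(t) - u(t) + \wtE_h(t)\wtDe_h Q_h v_h
 = \bigl(\delta(t) + \wtE_h(t)\wtDe_h Q_h v_h\bigr) + \bigl(u_h(t) - u(t)\bigr),
\]
with $\delta = \tu_h - u_h$, and bound each summand by $Ch^2 t^{-1}\|v\|$. The second summand is immediately controlled by the standard Galerkin nonsmooth-data estimate \eqref{1.nsm}, whose applicability is exactly the reason the hypothesis $v_h = P_h v$ is needed. The real task is the first summand, attacked via the Duhamel representation \eqref{delta-sol}.

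I would split at $s = t/2$, writing $\delta = \delta_1 + \delta_2$ with $\delta_1 = \int_0^{t/2}$ and $\delta_2 = \int_{t/2}^t$. For $\delta_2$, the direct bound \eqref{bEh-bDeh-Qh} combined with the Galerkin smoothing $\|\nabla u_{h,t}(s)\| \le Cs^{-3/2}\|v\|$ (from \eqref{fem-reg} with $v_h=P_h v$) produces an integrand of size $Ch^2(t-s)^{-1/2}s^{-3/2}\|v\|$, whose integral over $[t/2,t]$ is $O(h^2 t^{-1}\|v\|)$.

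For $\delta_1$, I would integrate by parts in $s$ using $u_{h,t}=D_s u_h$ together with $D_s\wtE_h(t-s) = -\wtDe_h\wtE_h(t-s)$. The boundary value at $s=0$ is exactly $-\wtE_h(t)\wtDe_h Q_h v_h$, which cancels the term subtracted on the left-hand side of the lemma; this cancellation is the structural reason the lemma is stated in this precise form. The remainder is
\[
\delta_1(t) + \wtE_h(t)\wtDe_h Q_h v_h
 = \wtE_h(t/2)\wtDe_h Q_h u_h(t/2)
 + \int_0^{t/2}\wtDe_h^2 \wtE_h(t-s)\,Q_h u_h(s)\,ds.
\]
The boundary term at $s=t/2$ is handled by a further application of \eqref{bEh-bDeh-Qh} with $\tau=t/2$ and $\|\nabla u_h(t/2)\|\le Ct^{-1/2}\|v\|$, giving $Ch^2 t^{-1}\|v\|$.

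The main obstacle is the integral term, and the key move is distributing the two $\wtDe_h$'s correctly. I would apply Lemma \ref{bEh-estimates} with $(\ell,p,q)=(2,0,1)$, which reads $\|\wtDe_h^2 \wtE_h(\tau)\xi\| = \|D_t^2\wtE_h(\tau)\xi\|\le C\tau^{-3/2}\|\nabla \xi\|$, combined with Lemma \ref{q_h-stability} in the form $\|\nabla Q_h \chi\|\le Ch^2\|\nabla\chi\|$, to obtain
\[
\|\wtDe_h^2 \wtE_h(t-s)\,Q_h u_h(s)\|\le Ch^2(t-s)^{-3/2}s^{-1/2}\|v\|,
\]
whose integral over $[0,t/2]$ equals $Ch^2 t^{-1}\|v\|$. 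Naive alternatives fail: putting both $\wtDe_h$'s on the semigroup gives a non-integrable $(t-s)^{-2}$ kernel, while moving both onto $Q_h$ only yields an $O(h)$ factor from Lemma \ref{q_h-stability}. The success of the plan hinges on choosing the balance $(\ell,p,q)=(2,0,1)$ in Lemma \ref{bEh-estimates}, which just barely produces an integrable kernel with the full $h^2$ saving.
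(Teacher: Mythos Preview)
Your proposal is correct and follows essentially the same route as the proof in \cite[Theorem 4.1]{clt11} to which the paper refers: split $\delta$ at $t/2$, integrate $\delta_1$ by parts so that the boundary term at $s=0$ cancels $\wtE_h(t)\wtDe_hQ_hv_h$, and estimate the remaining integral by combining Lemma~\ref{bEh-estimates} with $(\ell,p,q)=(2,0,1)$ and Lemma~\ref{q_h-stability} with $p=1$ --- exactly the balance used in \eqref{term-I} of Theorem~\ref{lumped-L2-norm-msmooth}.
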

Condition \eqref{higher_error} will be discussed in more detail
in Section  \ref{sec:specialmeshes} below. Note that, by Lemma
\ref{q_h-stability}, without additional assumptions on the mesh, we
 have
\[\| Q_h \chi \| \le
C\|\nabla  Q_h \chi \| \le C h \|\chi\|,  \quad  \forall \chi\in
S_h,
\]
and that the lower order error estimate of the following theorem always holds.
The proof is the same as that of \cite[Theorem 4.3]{clt11}. We shall show
in Section \ref{sec:counterexample} that a $O(h)$ bound is the best possible for 
general triangulation families $\{\T_h\}$.

\begin{theorem}\label{t4.3old}
 Let $u$ and $\tu_h$  be the solutions of \eqref{eq1} and 
\eqref{lumped}. Then 
\begin{equation*}\label{lumped-nonsmooth-estimate-nonopt}
\| \tu_h(t) -  u(t)\| \le Cht^{-1/2}\| v \|,\quad \ifff v_h = P_h v, \for t>0.
\end{equation*}
\end{theorem}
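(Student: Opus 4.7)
The plan is to combine Lemma \ref{lumped-L2-norm-nonsmooth} (which isolates the obstruction $\wtE_h(t)\wtDe_h Q_h v_h$) with the unconditional gradient bound for $Q_h$ from Lemma \ref{q_h-stability}. Without \eqref{higher_error}, we only know $\|\nabla Q_h\chi\|\le Ch\|\chi\|$, and so we gain one power of $h$ rather than two, with the price being a factor of $t^{-1/2}$ coming from the parabolic smoothing of $\wtE_h(t)\wtDe_h$.

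First I would invoke Lemma \ref{lumped-L2-norm-nonsmooth} with $v_h=P_hv$, which reduces the task to estimating $\|\wtE_h(t)\wtDe_h Q_h P_h v\|$ and adding a term of order $h^2t^{-1}\|v\|$. Writing $\wtE_h(t)\wtDe_h=D_t\wtE_h(t)$ and applying Lemma \ref{bEh-estimates} with $\ell=1$, $p=0$, $q=1$ (so that $2\ell+p\ge q$), I obtain, for $\chi\in S_h$,
\[
\|\wtE_h(t)\wtDe_h\chi\|\le Ct^{-1/2}\|\nabla\chi\|,\for t>0.
\]
Then, taking $\chi=Q_hP_hv$ and using Lemma \ref{q_h-stability} with $p=0$ together with the $L_2$-stability of $P_h$, I get
\[
\|\nabla Q_hP_hv\|\le Ch\|P_hv\|\le Ch\|v\|,
\]
and hence $\|\wtE_h(t)\wtDe_h Q_h P_h v\|\le Cht^{-1/2}\|v\|$.

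Combining these bounds through the triangle inequality yields
\[
\|\tu_h(t)-u(t)\|\le Ch^2t^{-1}\|v\|+Cht^{-1/2}\|v\|.
\]
For $t\ge h^2$ the first term is dominated by the second. For $t\le h^2$, the factor $ht^{-1/2}\ge 1$ is already large, and the trivial stability bounds $\|\tu_h(t)\|\le C\|v_h\|\le C\|v\|$ (from Lemma \ref{bEh-estimates} with $p=q=\ell=0$) and $\|u(t)\|\le\|v\|$ (from \eqref{1.smooth}) give $\|\tu_h(t)-u(t)\|\le C\|v\|\le Cht^{-1/2}\|v\|$. In either case the desired estimate follows.

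There is no serious obstacle: the argument is a short consequence of the two lemmas listed, and the only mild care needed is the small-$t$ regime, which is handled by the trivial stability bound. The whole point of the theorem is to show that Lemma \ref{q_h-stability} suffices for a first-order estimate even when the second-order hypothesis \eqref{higher_error} fails.
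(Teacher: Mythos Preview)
Your proof is correct and follows the route the paper implies: use Lemma \ref{lumped-L2-norm-nonsmooth} to reduce to bounding $\|\wtE_h(t)\wtDe_hQ_hP_hv\|$, then combine the smoothing estimate of Lemma \ref{bEh-estimates} (with $\ell=1,\ p=0,\ q=1$) and the unconditional bound $\|\nabla Q_h\chi\|\le Ch\|\chi\|$ from Lemma \ref{q_h-stability}; the paper itself only cites \cite[Theorem~4.3]{clt11}, but the remark immediately preceding the theorem points at exactly this combination. Your treatment of the regime $t\le h^2$ via the trivial stability bounds is the standard way to absorb the residual $h^2t^{-1}$ term.
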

We end this section by stating optimal order estimates for the
gradient of the error. Note that no additional assumption on $\{\T_h\}$ is required.
\begin{theorem}
\label{t4.3}
 Let $u$ and $\tu_h$  be the solutions of \eqref{eq1}
and \eqref{lumped}. Then, for $t>0$,
\[
\|\nabla(\tu_h(t) - u(t))\| \le
\begin{cases}
Ch|v|_2,  &\ifff\ \|\nabla(v_h-v)\|\le Ch|v|_2,
\\
Cht^{-1/2}|v|_1,\  &\ifff\ \|v_h-v\|\le Ch|v|_1,
\\
Cht^{-1}\|v\|,\  &\ifff\ v_h=P_hv.
\end{cases}
\]
\end{theorem}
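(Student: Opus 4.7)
\medskip\noindent\textbf{Proof plan.} The natural route is the same splitting used throughout the section: write
$\tu_h(t)-u(t)=\delta(t)+(u_h(t)-u(t))$, where $u_h$ denotes the standard Galerkin solution of \eqref{fem} with the same initial value $v_h$ and $\delta=\tu_h-u_h$. Theorem \ref{SG-H1} already delivers $\|\nabla(u_h-u)\|$ at each of the three rates claimed. Hence it suffices to bound $\|\nabla\delta(t)\|$ by $Ch|v|_2$, $Cht^{-1/2}|v|_1$, $Cht^{-1}\|v\|$ in the three cases. From \eqref{1.del}--\eqref{delta-eq}, Duhamel gives
$\delta(t)=\int_0^t\wtE_h(t-s)\wtDe_h Q_h u_{h,t}(s)\,ds$.

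The workhorse inequality I will use, obtained by combining Lemma \ref{bEh-estimates} (with $\ell=0$, $p=1$, $q=0$) with Lemma \ref{q_h-stability} (with $p=1$), is
\[
\|\nabla\wtE_h(\tau)\wtDe_h Q_h\chi\|\le C\tau^{-1/2}\|\wtDe_h Q_h\chi\|\le Ch\tau^{-1/2}\|\nabla\chi\|.
\]
An alternate form will also be needed in the nonsmooth case; rewriting $\wtE_h(\tau)\wtDe_h=D_\tau\wtE_h(\tau)$ and using Lemma \ref{bEh-estimates} with $\ell=1,p=1,q=1$ together with Lemma \ref{q_h-stability} with $p=0$ gives
\[
\|\nabla\wtE_h(\tau)\wtDe_h Q_h\chi\|\le C\tau^{-1}\|\nabla Q_h\chi\|\le Ch\tau^{-1}\|\chi\|.
\]

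For the smooth case, stability (Lemma \ref{bEh-estimates}) reduces us to $v_h=R_hv$; then $\De_hR_hv=P_h\De v$ gives $\|\nabla u_{h,t}(s)\|\le Cs^{-1/2}|v|_2$, and the first key estimate plus $\int_0^t(t-s)^{-1/2}s^{-1/2}\,ds=\pi$ yields $\|\nabla\delta(t)\|\le Ch|v|_2$. For the mildly nonsmooth case, again reduce to $v_h=R_hv$ (using $\|\nabla\wtE_h(t)(v_h-R_hv)\|\le Ct^{-1/2}\|v_h-R_hv\|\le Cht^{-1/2}|v|_1$) and split $\delta=\delta_1+\delta_2$ at $t/2$ as in \eqref{delta-sol-2}. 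The tail $\delta_2$ is treated directly by the first key estimate with $\|\nabla u_{h,t}(s)\|\le Cs^{-1}|v|_1$. For $\delta_1$, integrate by parts as in \eqref{term-I} and bound each of the three resulting pieces by $Cht^{-1/2}|v|_1$, using Lemmas \ref{bEh-estimates} and \ref{q_h-stability} together with the smoothing bound $\|\nabla u_h(s)\|\le C|v|_1$.

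For the nonsmooth case, $v_h=P_hv$. The same splitting works, with $\delta_2$ handled using $\|\nabla u_{h,t}(s)\|\le Cs^{-3/2}\|v\|$ from \eqref{fem-reg}. In $\delta_1$, after the integration by parts \eqref{term-I}, the boundary contribution at $s=0$ equals $\nabla\wtE_h(t)\wtDe_h Q_hP_hv$, which I control by the \emph{second} form of the key estimate, giving $Cht^{-1}\|v\|$; the boundary term at $s=t/2$ and the remaining integral are handled by the first form together with the smoothing bound $\|\nabla u_h(s)\|\le Cs^{-1/2}\|v\|$ and the integrable singularity $\int_0^{t/2}(t-s)^{-3/2}s^{-1/2}\,ds\le Ct^{-1}$. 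The principal obstacle is exactly at this boundary term: because $\|\nabla P_hv\|$ is not controlled by $\|v\|$, the first key estimate is useless at $s=0$, and switching to the $Ch\tau^{-1}\|\chi\|$ variant (which trades a factor of $\tau^{-1/2}$ against a gradient on $\chi$) is what makes the nonsmooth case go through.
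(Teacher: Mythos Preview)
Your proposal is correct and follows essentially the same route as the paper's proof, which reduces the first two cases to $v_h=R_hv$ via Lemma~\ref{bEh-estimates} and then defers to \cite[Theorem~3.1]{clt11}, and the nonsmooth case to \cite[Theorem~4.4]{clt11}; the Duhamel representation \eqref{delta-sol}, the splitting at $t/2$, and the integration by parts \eqref{term-I} that you use are precisely the ingredients of those referenced proofs. Your identification of the two ``key inequalities'' (the $Ch\tau^{-1/2}\|\nabla\chi\|$ form via $\|\wtDe_hQ_h\chi\|$ and the $Ch\tau^{-1}\|\chi\|$ form via $\|\nabla Q_h\chi\|$), and in particular the observation that the $s=0$ boundary term in the nonsmooth case forces the second form, is exactly the mechanism that makes the argument go through without any assumption such as \eqref{higher_error}.
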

\begin{proof}
 For the first two estimates it suffices, by the stability and smoothness
estimates of Lemma \ref{bEh-estimates}, to consider $v_h = R_h v$. For this choice of the
initial data the proofs are identical to those in \cite[Theorem 3.1]{clt11}. In the 
nonsmooth data case, the proof is the same as that of \cite[Theorem 4.4]{clt11}.
\end{proof}


\section{Symmetric and almost symmetric triangulations} 
\label{sec:specialmeshes}


In this section we first show that for families of triangulations $\{\T_h\}$ that 
are {\it symmetric}, in a sense to be defined below,  assumption \eqref{higher_error} is
satisfied and therefore, by Theorem \ref{lumped-nonsmooth-opt}, the optimal order 
nonsmooth data error estimate holds. We shall then 
relax the symmetry requirements and consider {\it almost symmetric} families of 
triangulations, consisting of $O(h^2)$ perturbations of symmetric triangulations. 
In this case we show that \eqref{higher_error} is satisfied with  an additional 
logarithmic factor and, as a consequence, an almost optimal order nonsmooth data error 
estimate holds. Finally for the less restrictive class of {\it piecewise almost symmetric} 
families $\{\T_h\}$ we derive a $O(h^{3/2})$ order nonsmooth data error estimate.

In addition to the quadrature error
operator $Q_h$ defined in \eqref{veph-def} we shall work
with the symmetric operator
$M_h:S_h\to S_h$, defined by
\begin{equation}\label{Mh-def0}
\vep_h( \w, \chi)= [\psi,M_h\chi],
\quad\forall \psi,\chi\in S_h, 
\end{equation}
where we use the inner product 
\begin{equation}\label{4.ipdef}
 [\psi,\chi]=
\sum_{z \in Z_h^0}\w(z)\chi(z), 
\quad\forall \psi,\chi\in S_h.
\end{equation}
To determine the form of this operator, we introduce some notation.
For $z\in Z_h^0$ an interior vertex of   $\T_h$, 
we define the patch
$\Pi_z = \{ \cup \K: \tau \in \T_h, \, z\in \partial \K  \}$, 
 where for 
simplicity we have assumed that $\tau=\bar\tau$. Further, for $z$ a
vertex of $\K\in \T_h$, we
 denote by $z^{\K}_{+}$ and $z^{\K}_{-}$ the other two vertices of
$\K$. We
then define,
\begin{equation}\label{4.delta}
 M_h^{\Pi_z} \chi 
:= -\frac{1}{54}\sum_{\K \subset \Pi_z} |\K| ( \chi(z^{\K}_{+})-2
\chi(z) +\chi(z^{\K}_{-})),
\end{equation}
for which the following holds.

\begin{lemma}\label{lemma-Mh-def0}
 For the operator $M_h$ defined by
\eqref{Mh-def0} we have, for $z\in Z_h^0$,
\begin{equation}\label{Mh-definition}
 M_h \chi(z) 
= M_h^{\Pi_z} \chi 
\with M_h^{\Pi_z} \chi 
\text{ given by \eqref{4.delta}}.
\end{equation}
\end{lemma}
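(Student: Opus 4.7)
The plan is to read off the nodal values of $M_h\chi$ by testing the defining relation \eqref{Mh-def0} against the hat function $\phi_z\in S_h$ at each interior vertex $z\in Z_h^0$. Since $\phi_z(z')=\delta_{zz'}$, the discrete inner product in \eqref{4.ipdef} gives $[\phi_z,M_h\chi]=M_h\chi(z)$, so
\begin{equation*}
M_h\chi(z)=\vep_h(\phi_z,\chi)=(\phi_z,\I_h\chi)-(\phi_z,\chi),
\end{equation*}
and the right-hand side reduces to a sum over the triangles $\tau\subset\Pi_z$ because $\phi_z$ is supported on $\Pi_z$.

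On each such $\tau$, with vertices $z,z_+^\tau,z_-^\tau$, the exact mass-matrix formula for piecewise linears gives
\begin{equation*}
(\phi_z,\chi)_\tau=\frac{|\tau|}{12}\bigl(2\chi(z)+\chi(z_+^\tau)+\chi(z_-^\tau)\bigr).
\end{equation*}
For the dual-mesh side I would use that the three medians of $\tau$ meet at the barycenter $z_\tau$ and partition $\tau$ into six triangles of equal area $|\tau|/6$, and that each control-volume piece $\tau_{z'}:=V_{z'}\cap\tau$ is the union of the two of these sub-triangles adjacent to $z'$. Since $\phi_z$ is affine with values $1$ at $z$, $1/2$ at the midpoints of edges through $z$, $0$ at $z_\pm^\tau$, and $1/3$ at $z_\tau$, integrating $\phi_z$ over each sub-triangle amounts to averaging its three vertex values and multiplying by $|\tau|/6$. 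Collecting the contributions yields
\begin{equation*}
\int_{\tau_z}\phi_z\,dx=\tfrac{11}{54}|\tau|,\qquad \int_{\tau_{z_+^\tau}}\phi_z\,dx=\int_{\tau_{z_-^\tau}}\phi_z\,dx=\tfrac{7}{108}|\tau|,
\end{equation*}
and, using that $\I_h\chi\equiv\chi(z')$ on each $V_{z'}$ (contributions from boundary vertices are harmless since $\chi\in S_h$ vanishes on $\partial\Om$),
\begin{equation*}
(\phi_z,\I_h\chi)_\tau=\tfrac{11}{54}|\tau|\chi(z)+\tfrac{7}{108}|\tau|\bigl(\chi(z_+^\tau)+\chi(z_-^\tau)\bigr).
\end{equation*}

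Subtracting, the per-triangle contribution collapses to $-\frac{|\tau|}{54}\bigl(\chi(z_+^\tau)-2\chi(z)+\chi(z_-^\tau)\bigr)$, and summing over $\tau\subset\Pi_z$ reproduces \eqref{4.delta}. The main obstacle is purely combinatorial bookkeeping: correctly identifying the six equal-area sub-triangles of each $\tau$, recognizing which two compose each dual-cell piece $\tau_{z'}$, and reading off the three affine values of $\phi_z$ at the relevant midpoints and barycenter. Once these local integrals are pinned down, no symmetry or quasi-uniformity of $\{\T_h\}$ is invoked; the identity is purely geometric.
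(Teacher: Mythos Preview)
Your proof is correct and follows essentially the same route as the paper: both compute the local integrals $\int_\tau \psi\,\I_h\chi\,dx$ and $\int_\tau \psi\chi\,dx$ on each triangle and subtract, obtaining the per-triangle contribution $-\tfrac{|\tau|}{54}(\chi(z_+^\tau)-2\chi(z)+\chi(z_-^\tau))$. The only cosmetic difference is that the paper keeps $\psi\in S_h$ general, derives the formula $\int_\tau\psi\,\I_h\chi\,dx=\tfrac{|\tau|}{108}\sum_j\psi_j(22\chi_j+7\chi_{j-1}+7\chi_{j+1})$, and then reorganizes the sum over triangles into a sum over vertices, whereas you specialize to $\psi=\phi_z$ from the outset and compute directly on $\Pi_z$; your six-sub-triangle bookkeeping is exactly what the paper hides behind the phrase ``after simple calculations.''
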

\begin{proof}
 In view of \eqref{veph-def}, we may write
\begin{equation}\label{4.4prime}
 \vep_h( \w, \chi)= (\w,
\I_h\chi)-(\w, \chi) = \sum_{\K \in \T_h } \int_\K (
\w\I_h\chi-\w \chi)\,dx.
\end{equation}
\begin{figure}[t]
\centering{
\includegraphics[width=0.28\textwidth]{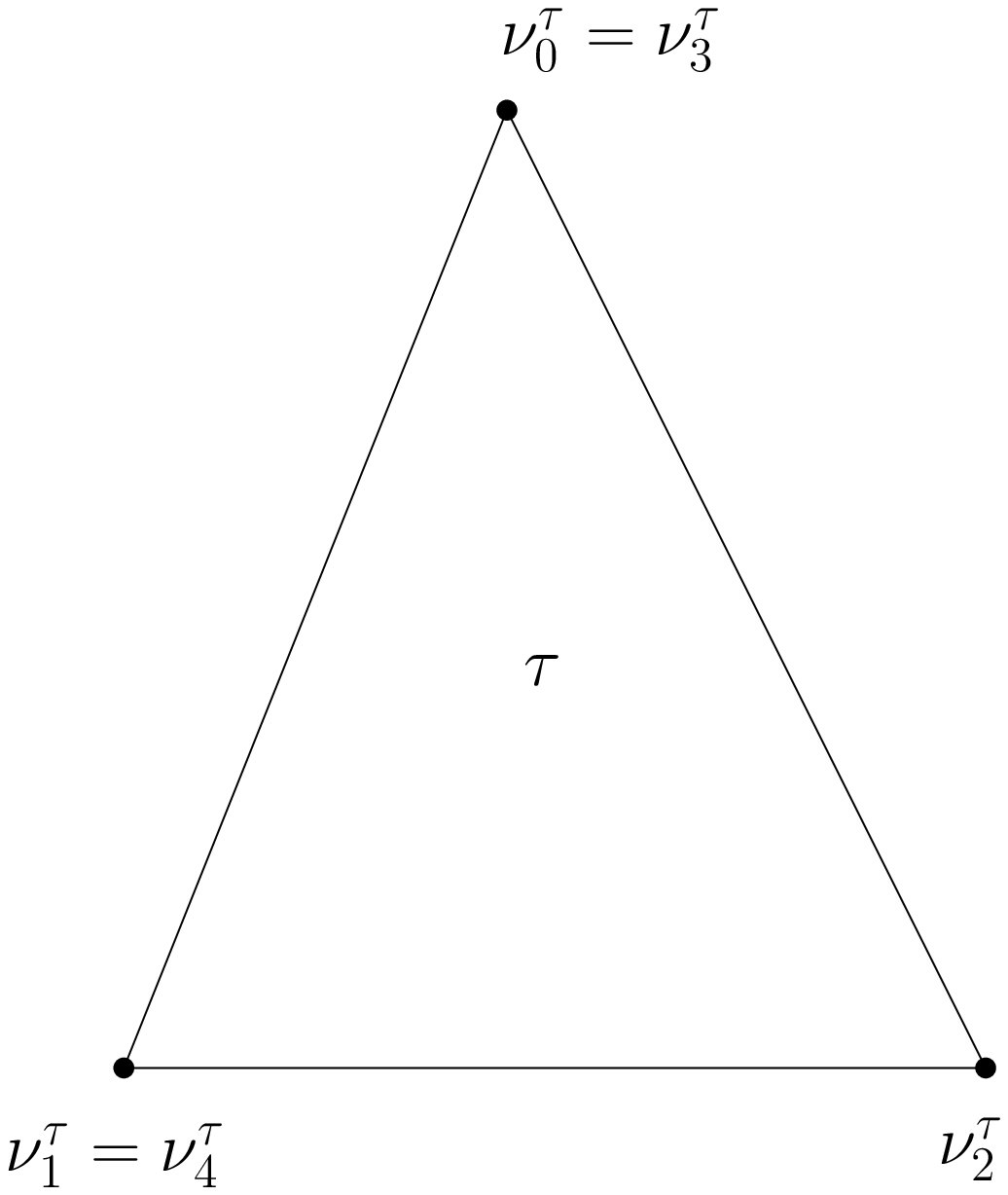}
\hspace{1cm}
\includegraphics[width=0.3\textwidth]{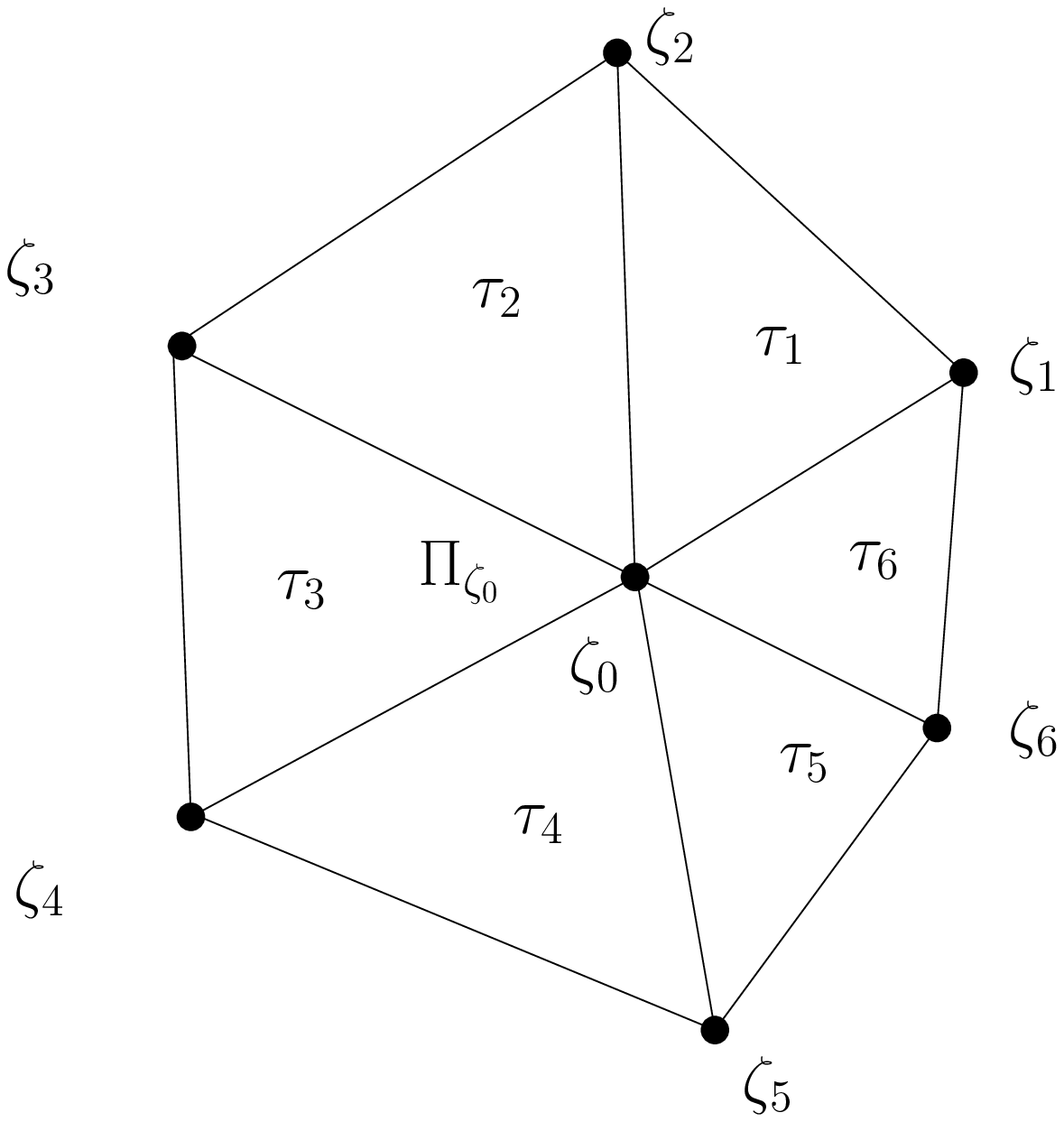}
}
\caption{Left: A triangle $\tau$. Right: A patch $\Pio0$ around a
vertex
$\z_0$} \label{fig:tri}
\end{figure}
For $\K\in \T_h$ we denote its vertices by
$\nu_1^\tau,\nu_2^\tau,\nu_3^\tau$ and set
$\nu_4^\tau=\nu_1^\tau$, $\nu_0^\tau=\nu_3^\tau$,
 see Figure \ref{fig:tri}.
Writing $w_j=w(\nu_j^\tau)$ for a function $w$ on $\tau$, we obtain,
after  simple calculations,
\begin{equation}\label{4.5}
\int_\K \psi\I_h\chi\, dx
=\dfrac{|\K|}{108}\sum_{j=1}^3\psi_j(22\chi_j+7\chi_{j-1}+7\chi_{j+1}),
\end{equation}
and
\[
\int_\K\psi\chi\,dx
=\dfrac{|\K|}{12}\sum_{j=1}^3\psi_j(2\chi_j+\chi_{j-1}+\chi_{j+1}).
\]
Thus
$$
\int_\K ( \w\I_h\chi-\w \, \chi)\,dx
=-\frac{|\K|}{54}\sum_{j=1}^3 \psi_j
 (\chi_{j+1}-2\chi_j+\chi_{j-1}).
$$
Summation over $\tau\in \T_h$, \eqref{Mh-def0}  and \eqref{4.4prime} show
\[
[\psi,M_h\chi]=\sum_{z\in Z_h^0}\psi(z) M_h^{\Pi_z}\chi,
\quad\forall \psi,\chi\in S_h.
\]
This implies \eqref{Mh-definition} and thus completes the proof.
\end{proof}

\begin{figure}[t]
\centering{
\includegraphics*[width=0.7\textwidth]{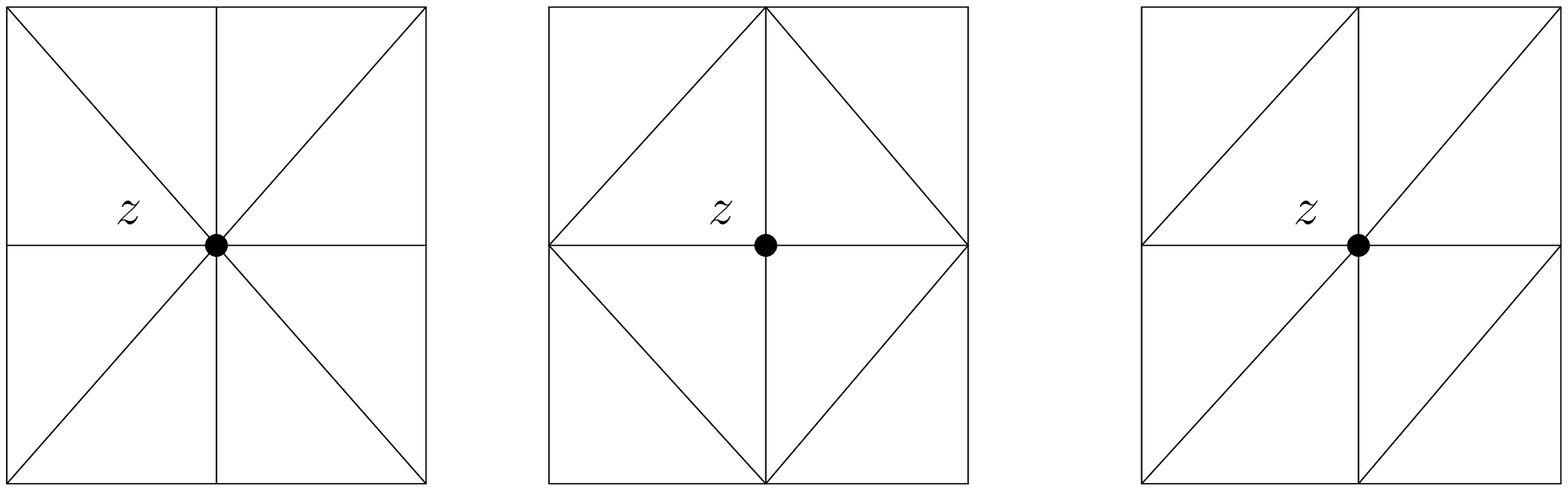}
}
\caption{Patches which are symmetric with respect to the vertex
$z$} \label{fig:s-meshes-2}
\end{figure}
We say that  $\T_h$ is {\it symmetric at $z\in Z_h^0$,} if the
corresponding patch $\Pi_z$ is symmetric  around $z$, in the sense
that if $x\in \Pi_z$, then $z-(x-z)=2z-x \in \Pi_z$.
We say that $\T_h$ is {\it symmetric} if it is symmetric at
each $z \in Z^0_h$. The patch $\Pio0$ in Figure \ref{fig:tri} is
nonsymmetric with respect to $\z_0$, whereas triangulations which are built up 
of either of the patches shown in Figure \ref{fig:s-meshes-2} are
symmetric. Symmetric triangulations exist only for special domains,
such as parallelograms, but not for general polygonal domains.

 We now show the sufficiency of symmetry of  $\{\T_h\}$ for condition
\eqref{higher_error} for the operator $Q_h$, and hence, by Theorem
\ref{lumped-nonsmooth-opt},
for the nonsmooth data error estimate.
\begin{theorem}\label{verify}
If the family $\{\T_h\}$ is symmetric, then \eqref{higher_error} holds.
\end{theorem}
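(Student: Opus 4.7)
The plan is an Aubin--Nitsche duality argument built on the explicit vertex formula of Lemma~\ref{lemma-Mh-def0}, in which the symmetry of $\{\T_h\}$ is used solely to cancel the first--order contributions to $M_h\chi(z)$. Write $\|Q_h\psi\|=\sup_{\|g\|=1}(Q_h\psi,g)$; for given $g\in L_2$ let $w\in\dot H^2$ solve $-\De w=g$, so $|w|_2\le C\|g\|$. Since $Q_h\psi\in S_h$ and $R_h$ is the Ritz projection,
\[
(Q_h\psi,g)=(\nabla Q_h\psi,\nabla w)=(\nabla Q_h\psi,\nabla R_hw)=\vep_h(\psi,R_hw)=[\psi,M_hR_hw],
\]
so the task reduces to proving $|[\psi,M_hR_hw]|\le Ch^2\|\psi\|\,|w|_2$.

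Next I would rewrite $M_h$ to expose the role of symmetry. Enumerate the neighbors $z_1,\dots,z_m$ of $z\in Z_h^0$ cyclically and set $\K_i=\mathrm{conv}(z,z_i,z_{i+1})$, $a_i=|\K_{i-1}|+|\K_i|$. Regrouping Lemma~\ref{lemma-Mh-def0} around each $z_i$, and using $\sum_i a_i=2|\Pi_z|$, gives
\[
M_h\chi(z)=-\tfrac1{54}\sum_{i=1}^m a_i\bigl(\chi(z_i)-\chi(z)\bigr).
\]
If $\Pi_z$ is symmetric around $z$, then $m$ is even, $z_{i+m/2}=2z-z_i$, and reflection forces $|\K_{i+m/2}|=|\K_i|$, hence $a_{i+m/2}=a_i$. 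Pairing $i$ with $i+m/2$ collapses the sum to
\[
M_h\chi(z)=-\tfrac1{54}\sum_{i=1}^{m/2}a_i\bigl[\chi(z_i)+\chi(2z-z_i)-2\chi(z)\bigr],
\]
a weighted sum of \emph{symmetric} second differences at $z$ in which the first--order variation of $\chi$ has vanished. This is the sole place where the symmetry assumption enters and it supplies the extra factor of $h$ relative to the crude bound of Lemma~\ref{q_h-stability}.

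Applied to $\chi=R_hw$ and split as $R_hw=w+(R_hw-w)$, each symmetric second difference of the smooth part would be bounded by an averaged form of the integral Taylor remainder along the chord $[2z-z_i,z_i]$, giving a patchwise estimate of the type $|w(z_i)+w(2z-z_i)-2w(z)|\le Ch\,\|w\|_{H^2(\widehat\Pi_z)}$ for a mildly enlarged patch $\widehat\Pi_z$, while $R_hw-w$ contributes the same majorant via~\eqref{rh-bound}. Since $a_i\le Ch^2$, this yields the pointwise estimate $|M_hR_hw(z)|\le Ch^3\|w\|_{H^2(\widehat\Pi_z)}$. Cauchy--Schwarz, together with the norm equivalence $\|\psi\|^2\sim h^2\sum_z\psi(z)^2$ on $S_h$ and the finite overlap of the $\widehat\Pi_z$, then gives $|[\psi,M_hR_hw]|\le (Ch^{-1}\|\psi\|)(Ch^3|w|_2)=Ch^2\|\psi\|\,|w|_2$, completing the verification of~\eqref{higher_error}.

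The main obstacle is the patchwise averaged $L_2$-bound on the symmetric second differences of $R_hw$, without any logarithmic loss. Since $\dot H^2$ does not control $D^2w$ pointwise in two dimensions, both the Taylor remainder for $w$ and the Ritz--projection error must be handled through mollified or averaged integral quantities (e.g.\ via Hardy--Littlewood maximal--function bounds for the line integrals of $D^2w$ over the chords), so that the final summation over patches is controlled cleanly by $|w|_2$ alone.
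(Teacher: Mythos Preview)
Your overall strategy---duality, the explicit vertex formula for $M_h$, and the observation that symmetry of $\Pi_z$ collapses $M_h\chi(z)$ into a weighted sum of symmetric second differences annihilating affine functions---is exactly the paper's. The difference is a single choice that creates the obstacle you flag at the end and that, as written, constitutes a genuine gap.

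You pass from $(\nabla Q_h\psi,\nabla w)$ to $(\nabla Q_h\psi,\nabla R_hw)$ via the Ritz projection and then need a \emph{patchwise} bound on $M_hR_hw(z)$ in terms of $\|w\|_{H^2(\widehat\Pi_z)}$. Splitting $R_hw=w+(R_hw-w)$, the first part is fine (and, incidentally, needs no maximal--function machinery: the functional $w\mapsto w(z_i)+w(2z-z_i)-2w(z)$ is bounded on $H^2$ of the patch and vanishes on $P_1$, so Bramble--Hilbert plus scaling gives $\le Ch\,|w|_{H^2(\Pi_z)}$ directly). The second part, however, cannot be controlled locally: $R_h$ is a global operator, and $(R_hw-w)(z)$ is not bounded by any norm of $w$ restricted to a neighborhood of $z$. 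So the claim that ``$R_hw-w$ contributes the same majorant via~\eqref{rh-bound}'' fails---\eqref{rh-bound} is a global $L_2$/$H^1$ bound, not a pointwise local one, and no amount of averaging over chords repairs this.

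The fix is exactly what the paper does: work with the \emph{interpolant} rather than the Ritz projection. Restrict the sup to $\chi\in S_h$ (legitimate since $Q_h\psi\in S_h$), let $-\Delta\varphi=\chi$, and split $(\nabla Q_h\psi,\nabla\varphi)$ using $I_h\varphi$ instead of $R_h\varphi$. The cross term $(\nabla Q_h\psi,\nabla(\varphi-I_h\varphi))$ is bounded trivially by Lemma~\ref{q_h-stability} and the interpolation error, and the main term becomes $[\psi,M_hI_h\varphi]$. Now $M_hI_h\varphi(z)$ involves only the \emph{actual point values} of $\varphi$ at the vertices of $\Pi_z$, so the Bramble--Hilbert argument gives $|M_hI_h\varphi(z)|\le Ch^3\|\varphi\|_{H^2(\Pi_z)}$ purely locally, with no global projection error to contend with. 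The summation over $z$ then proceeds exactly as you describe.
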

\begin{proof}
The proof, by duality, follows that of \cite[Theorem 5.1]{clt11}. For given 
$\chi \in S_h$ we define $\vfy=\vfy_\chi\in \dot H^1$ as the solution of the Dirichlet
problem $-\Delta \vfy=\chi$ in $\Om$, $\vfy=0$ on $\partial\Om$. Since $\Om$ is convex, 
we have $\vfy \in \dot H^2$ and $|\vfy|_{2} \le C \|\chi\|$. With $\Id_h$  the finite 
element interpolation operator into $S_h$, we  have, for any $\psi \in S_h$,
\begin{align}
 \label{l2-norm}
\| Q_h \psi \| & =\sup_{\chi \in S_h} \frac{(Q_h \psi,
\chi)}{\|\chi\|}
  =\sup_{\chi \in S_h}
\frac{(\nabla Q_h \psi, \nabla \vfy)}{\|\chi\|}
\\
  &\le\sup_{\chi \in S_h}
 \frac{|(\nabla \ww, \nabla (\vfy - \Id_h \vfy))|}{\|\chi\|}
  +\sup_{\chi \in S_h}
\frac{|(\nabla \ww, \nabla \Id_h \vfy)|}{\|\chi\|}=I+II.
\notag
\end{align}
By the obvious error estimate for $\Id_h$ and Lemma
\ref{q_h-stability}, with $p=0$, we find
\begin{equation}\label{4.I}
|I|\le Ch  \sup_{\chi \in S_h}\frac{\|\nabla Q_h\psi\|\,|\vfy|_2}{\|\chi\|}
\le Ch^2\|\psi\|.
\end{equation}
To estimate $II$, we employ \eqref{q_h-def} and \eqref{Mh-def0} to  
rewrite the numerator 
in the form
\begin{equation}\label{4.eh}
(\nabla \ww, \nabla \Id_h \vfy)= \vep_h( \w, \Id_h \vfy)= 
[\w, M_h \Id_h\vfy]. 
\end{equation} 

To bound $M_hI _h\vfy$,  we consider an arbitrary vertex $z=\z_0\in
Z_h^0$.
Let $\Pio0$ be the corresponding patch of $\T_h$,
with vertices $\{\z_j\}_{j=1}^K,$ numbered counter--clockwise, 
with
$\z_{j+K}=\z_j$ for all $j$. Also denote by
$\{\tau_j\}_{j=1}^K$, the triangles of $\T_h$ 
in $\Pio0$, with $\tau_j$ having vertices $\z_0,\, \z_j,\,
\z_{j+1}$, and set $\tau_0=\tau_K$ (see Figure \ref{fig:tri}). Then 
Lemma \ref{lemma-Mh-def0} implies
\begin{equation}\label{Mh-def}
 M_h \Id_h \varphi(\z_0) =M_h^{\Pio0} \Id_h \varphi 
= -\frac1{54}\sum_{j=1}^K\om_j(\varphi(\z_j)-\varphi(\z_0)),
\end{equation}
with $ \om_j=|\K_{j-1}|+|\K_j| $.
By assumption, the  patch $\Pio0$ is symmetric and hence, by
\eqref{Mh-def},  we can express $M_h\Id_h \vfy(\z_0)$ as a
linear combination of terms of the form
$\vfy(\z_j)-2\vfy(\z_0)+\vfy(\z_j')$, where $\z_0$ is the midpoint of
the vertices $\z_j$ and $\z_j'$
of $\Pio0$. Hence $M_h\Id_h\vfy(\z_0)=0$ for $\vfy$ linear in
$\Pio0$ and, as in \cite{clt11}, we may apply the Bramble--Hilbert
lemma to obtain
\begin{equation}\label{5.delst}
|M_h \Id_h\vfy(\z_0) |\le C h^2 |\Pio0
|^{1/2}\|\vfy\|_{H^2(\Pio0)}
\le C h^3 \|\vfy\|_{H^2(\Pio0)}.
\end{equation}
Employing this estimate for all patches $\Pi_z$ of $\T_h$, we obtain,
for any $\psi \in S_h$,
\begin{equation}\label{eh-bound}
\begin{split}
 |[\w, M_h \Id_h\vfy]|
\le C h^3
 \sum_{z \in Z_h^0}
|\w(z)|\,
\|\vfy\|_{H^2(\Pi_z)}
 \le Ch^2
\|\psi \| \, | \vfy |_{2} \le  Ch^2 \|\psi \|\,\|\chi\|.
\end{split}\end{equation}
Hence, in view of \eqref{l2-norm} and  \eqref{4.eh}, we obtain
$|II|\le
Ch^2\|\psi\|.$
 Together with \eqref{4.I} this completes the proof.
\end{proof} 
%
\begin{figure}[t]
\centering{
\includegraphics*[width=0.45\textwidth]{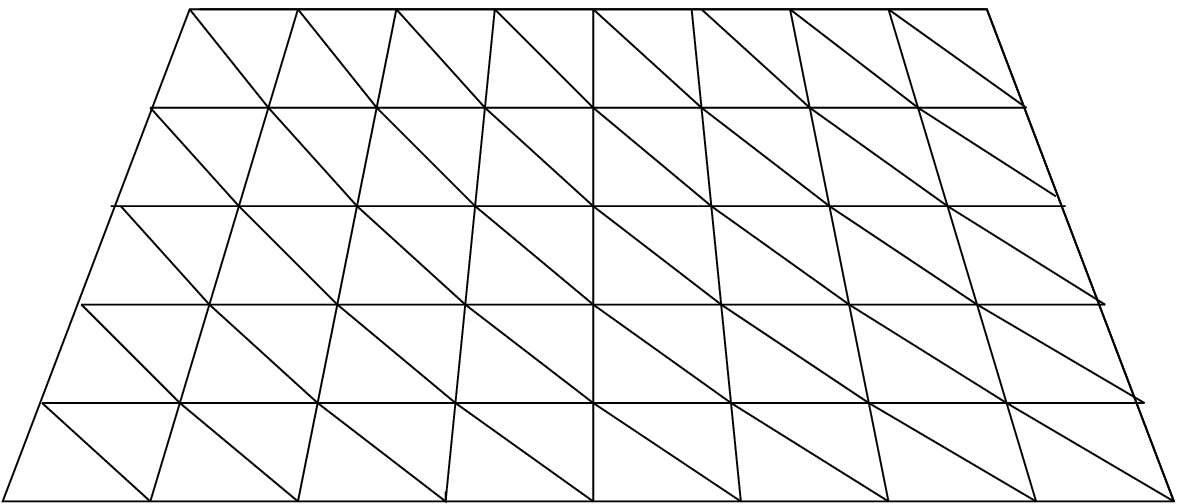}\hspace{.5cm}
\includegraphics*[width=0.45\textwidth]{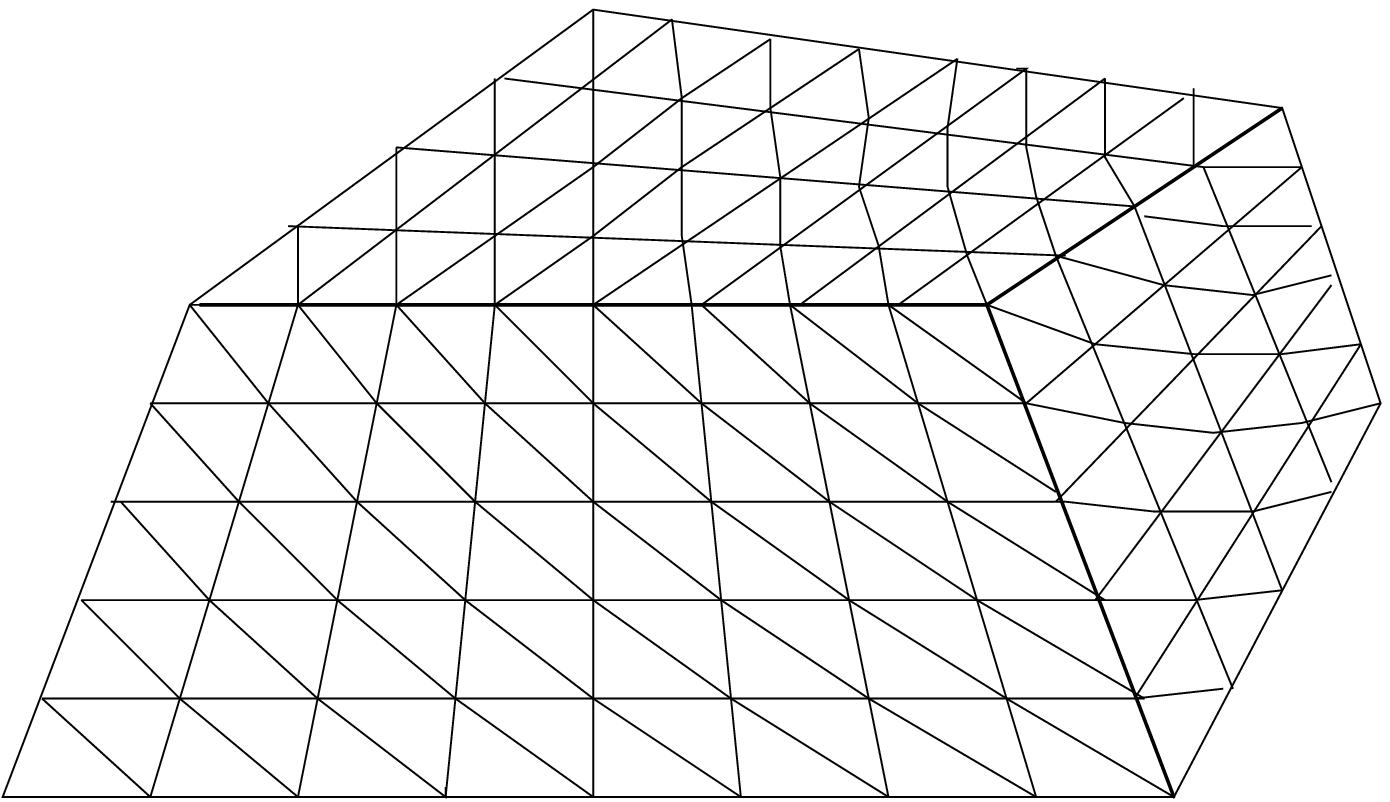}
} \caption{{\it Left}: An almost symmetric triangulation. 
 {\it Right}: A piecewise almost
symmetric triangulation.} \label{fig:piecewise-sym}
\end{figure}
We now want to slightly weaken the assumption about
symmetry.
We say that a family of triangulations $\{\T_h\}$ is  {\it almost
symmetric} if each $\T_h$ is a perturbation by $O(h^2)$ of a symmetric
triangulation, uniformly in $h$, in the sense that with each patch
$\Pi_z$
of $\T_h$ there  is an associated  symmetric patch from which $\Pi_z$
is obtained by moving each of its vertices  by $O(h^2)$. Such
triangulations exist for any convex quadrilateral, cf. 
 Figure \ref{fig:piecewise-sym}.
We  note that various special triangulations have been 
used in the past for obtaining higher order accuracy for the gradient of the
 finite element solution (super--convergent rates of $O(h^2)$ or 
$O(h^2 \ell_h)$), see, e.g., 
\cite{Krizek-Neit, Lin-Xu, Wahlbin-book}. For example, the strongly regular 
triangulations from \cite{Lin-Xu}, requiring that any two adjacent triangles 
form almost a parallelogram (a deviation of a parallelogram by $O(h^2)$), are 
almost symmetric meshes in our terminology.
We shall show that, in this case, we have almost optimal
order convergence for nonsmooth initial data.

\begin{theorem}\label{verify1}
If the  family $\{\T_h\}$ is almost symmetric, then
\begin{equation}\label{5.qlog}
\|Q_h\psi\|\le C h^{2}
\ell_h^{1/2}\,
\|\psi\|,\quad\forall\psi\in S_h,
\where \ell_h=1+|\log h|.
\end{equation}
Hence, for the solution of \eqref{vvv}, with $v_h=P_hv$, we have
\begin{equation}\label{lumped-nonsmooth-est-opt1}
\| \tu_h(t) -  u(t)\| \le Ch^2
\ell_h^{1/2}\,
t^{-1}\| v \|, \for t>0.
\end{equation}
\end{theorem}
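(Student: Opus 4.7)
The plan is to follow the duality strategy of Theorem \ref{verify}, carefully isolating the failure of exact symmetry and tracking the logarithmic factor it introduces. Given $\chi\in S_h$, let $\vfy\in\dot H^2$ solve $-\Delta\vfy=\chi$ with $|\vfy|_2\le C\|\chi\|$, and split
\[
\|Q_h\psi\|=\sup_{\chi\in S_h}\frac{(\nabla Q_h\psi,\nabla\vfy)}{\|\chi\|}\le I+II
\]
exactly as in \eqref{l2-norm}. The term $I$ still contributes $Ch^2\|\psi\|$ via \eqref{4.I}, independently of any symmetry hypothesis. For $II$ we again use $(\nabla Q_h\psi,\nabla I_h\vfy)=[\psi,M_hI_h\vfy]$ and study $M_h^{\Pi_z}I_h\vfy$ patch by patch.

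The new ingredient is a patchwise estimate that quantifies the defect from symmetry. Let $\hat\Pi_z$ denote the exactly symmetric patch with which the almost symmetric patch $\Pi_z$ is associated, with vertices $\{\hat z_j\}$ and triangle areas $\{\hat\omega_j\}$; the almost symmetry hypothesis gives $|z_j-\hat z_j|\le Ch^2$ and therefore $|\omega_j-\hat\omega_j|\le Ch^3$. Decompose
\begin{align*}
M_h^{\Pi_z}I_h\vfy
&=\ -\tfrac{1}{54}\sum_j\hat\omega_j\bigl(\vfy(\hat z_j)-\vfy(z_0)\bigr)\\
&\quad -\tfrac{1}{54}\sum_j\Bigl[(\omega_j-\hat\omega_j)\bigl(\vfy(z_j)-\vfy(z_0)\bigr)+\hat\omega_j\bigl(\vfy(z_j)-\vfy(\hat z_j)\bigr)\Bigr].
\end{align*}
The first sum vanishes on linear functions thanks to the exact symmetry of $\hat\Pi_z$, and the Bramble--Hilbert argument from Theorem \ref{verify} bounds it by $Ch^3\|\vfy\|_{H^2(\hat\Pi_z)}$. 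The second sum is of size $Ch^4\|\nabla\vfy\|_{L^\infty(\Pi_z^*)}$, by combining the bounds on $|\omega_j-\hat\omega_j|$ and $|z_j-\hat z_j|$ with $|\vfy(z_j)-\vfy(z_0)|\le Ch\|\nabla\vfy\|_{L^\infty(\Pi_z^*)}$ and $|\vfy(z_j)-\vfy(\hat z_j)|\le Ch^2\|\nabla\vfy\|_{L^\infty(\Pi_z^*)}$, where $\Pi_z^*$ is a fixed neighborhood of $\Pi_z$.

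Summing over $z\in Z_h^0$ as in \eqref{eh-bound}, Cauchy--Schwarz together with the norm equivalence $(\sum_z|\psi(z)|^2)^{1/2}\le Ch^{-1}\|\psi\|$ gives the clean symmetric bound $Ch^2\|\psi\|\,|\vfy|_2\le Ch^2\|\psi\|\,\|\chi\|$ exactly as in Theorem \ref{verify}, together with a perturbation contribution of the form
\[
Ch^4\sum_z|\psi(z)|\,\|\nabla\vfy\|_{L^\infty(\Pi_z^*)}\le Ch^2\|\psi\|\,\|\nabla\vfy\|_{L^\infty(\Omega)},
\]
after crudely bounding each local norm by the global one and using that the number of patches is $O(h^{-2})$. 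The main obstacle is then to control $\|\nabla\vfy\|_{L^\infty(\Omega)}$ by $\ell_h^{1/2}\|\chi\|$. For $\chi\in S_h$ on the convex polygonal domain $\Omega$, I would combine Calder\'on--Zygmund $W^{2,p}$ regularity, the two--dimensional Morrey embedding $W^{2,p}\hookrightarrow W^{1,\infty}$ valid for $p>2$, and the inverse inequality $\|\chi\|_{L^p}\le Ch^{-(1-2/p)}\|\chi\|$ in $S_h$; optimizing the resulting bound over $p\sim\ell_h$ produces $\|\nabla\vfy\|_{L^\infty(\Omega)}\le C\ell_h^{1/2}\|\chi\|$ and completes the proof of \eqref{5.qlog}.

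Finally, \eqref{lumped-nonsmooth-est-opt1} follows immediately from \eqref{5.qlog} and Lemma \ref{lumped-L2-norm-nonsmooth}: with $v_h=P_hv$, Lemma \ref{bEh-estimates} yields
\[
\|\wtE_h(t)\wtDe_hQ_hP_hv\|\le Ct^{-1}\|Q_hP_hv\|\le Ch^2\ell_h^{1/2}t^{-1}\|v\|,
\]
and combining with Lemma \ref{lumped-L2-norm-nonsmooth} exactly as in the proof of Theorem \ref{lumped-nonsmooth-opt} produces the claimed error bound.
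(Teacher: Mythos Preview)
Your decomposition of $M_h^{\Pi_z}I_h\vfy$ into a symmetric part plus an $O(h^4\|\nabla\vfy\|_{L^\infty})$ perturbation is correct, but the argument you give for the key global bound $\|\nabla\vfy\|_{L^\infty(\Omega)}\le C\ell_h^{1/2}\|\chi\|$ does not work as written. Combining $W^{2,p}$ regularity, the Morrey embedding $W^{2,p}\hookrightarrow W^{1,\infty}$, and the inverse inequality $\|\chi\|_{L^p}\le Ch^{2/p-1}\|\chi\|$ produces a bound whose optimum in $p$ is attained at $p-2\sim 1/\ell_h$, \emph{not} at $p\sim\ell_h$: the Morrey constant blows up like $(p-2)^{-1/2}$ as $p\to 2^+$, and this must be balanced against $h^{2/p-1}\approx h^{-(p-2)/2}$, forcing $p$ barely above $2$. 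With $p\sim\ell_h$ the factor $h^{2/p-1}$ is essentially $h^{-1}$ and the bound is useless. There is a second difficulty with large $p$: on a general convex polygon the Calder\'on--Zygmund estimate $\|\vfy\|_{W^{2,p}}\le C\|\chi\|_{L^p}$ is only available for $p$ below a finite threshold determined by the largest interior angle, so taking $p\sim\ell_h$ is not even admissible. The correct choice $p\approx 2$ avoids both issues; alternatively, a Green's function argument (split $\int\nabla_xG(x,y)\chi(y)\,dy$ at scale $|x-y|=h$, use $|\nabla_xG|\le C/|x-y|$, and bound the far part by Cauchy--Schwarz) gives the $\ell_h^{1/2}$ factor directly.

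The paper takes a different and more local route: rather than passing through $\|\nabla\vfy\|_{L^\infty(\Omega)}$, it proves the patchwise estimate $|M_hI_h\vfy(\zeta_0)|\le Ch^3\ell_h^{1/2}\|\vfy\|_{H^2(\Pi_{\zeta_0})}$ by means of a scaled H\"older--Sobolev inequality (Lemma~\ref{5.sob}), which bounds differences $|\vfy(z')-\vfy(z)|$ on each patch by $C\ep^{-1/2}|z'-z|^{1-\ep}h^{-1+\ep}\|\vfy\|_{H^2(\Pi_{\zeta_0})}$, and then sets $\ep=\ell_h^{-1}$. Summing the local $H^2$ norms with bounded overlap recovers $\|\vfy\|_{H^2(\Omega)}\le C\|\chi\|$, so only the $H^2$ regularity available on any convex domain is used. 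Once your optimization is repaired your global approach also reaches the goal, but the paper's local argument is cleaner and sidesteps the regularity issues entirely.
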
 In the proof we shall need the 
following Sobolev type inequality,
where  the $|\cdot|_{H^k}$ denote 
seminorms with only the derivatives of highest order $k$.
\begin{lemma}\label{5.sob}
Let $B$ be a fixed  bounded
 domain, satisfying the cone property. Then we have, for
 $0<\ep<1$,
\[
\sup_{z,z'\in B,\,z'\ne z}\frac{|\vfy(z')-\vfy(z)|}{|z'-z|^{1-\ep}}
\le
C\ep^{-1/2}
\big(|\vfy|_{H^{1}(B)}
+|\vfy|_{H^{2}(B)}\big),
\quad\forall\vfy\in H^2(B).
\]
\end{lemma}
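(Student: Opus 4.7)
The plan is to combine Morrey's embedding with a sharp tracking of constants in the two--dimensional Sobolev embedding applied to $\nabla\vfy$, and then to optimize the exponent by choosing $p = 2/\ep$.

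First, I would invoke Morrey's inequality on the fixed domain $B$: for every $p>2$ there is a constant $C_p$, bounded as $p\to\infty$ (in fact, $C_p\to$ const as $p\to\infty$, since the standard proof yields a factor $\bigl(p/(p-2)\bigr)^{1-1/p}$), such that
\[
|\vfy(z')-\vfy(z)|\le C_p\,|z'-z|^{1-2/p}\,\|\nabla\vfy\|_{L^p(B)},\qquad\forall z,z'\in B.
\]
Setting $p=2/\ep$ (so that $1-2/p=1-\ep$) makes the constant $C_p$ harmless, so the Hölder seminorm of $\vfy$ of exponent $1-\ep$ is controlled by $\|\nabla\vfy\|_{L^{2/\ep}(B)}$.

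Second, I would use the two--dimensional Sobolev/Trudinger--type estimate: for $B\subset\R^2$ satisfying the cone property and all $p\in[2,\infty)$,
\[
\|w\|_{L^p(B)}\le C\sqrt{p}\,\|w\|_{H^1(B)},\qquad\forall w\in H^1(B).
\]
Applied to $w=\partial_i\vfy$ for $i=1,2$, this yields
\[
\|\nabla\vfy\|_{L^p(B)}\le C\sqrt{p}\,\bigl(|\vfy|_{H^1(B)}+|\vfy|_{H^2(B)}\bigr).
\]
Choosing $p=2/\ep$ produces the factor $\sqrt{p}=\sqrt{2/\ep}$, which is precisely the $\ep^{-1/2}$ that appears in the statement.

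Combining these two ingredients and dividing by $|z'-z|^{1-\ep}$ before taking the supremum over $z\neq z'$ finishes the proof. The main obstacle is the $\sqrt{p}$--growth of the constant in the Sobolev embedding into $L^p$ in dimension two; this is classical but not entirely routine, and can be obtained either by expanding the exponent in the Trudinger inequality $\int_B e^{c|w|^2/\|w\|_{H^1}^2}\,dx\le C|B|$ in a Taylor series in $p/2$ and using Stirling, or by a direct Gagliardo--Nirenberg interpolation $\|w\|_{L^p}\le C p^{1/2}\|w\|_{L^2}^{2/p}\|w\|_{H^1}^{1-2/p}$. The boundedness of the Morrey constant $C_p$ as $p\to\infty$ is, by comparison, a straightforward consequence of the standard proof of Morrey's inequality on domains with the cone property.
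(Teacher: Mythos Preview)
Your proposal is correct and follows essentially the same route as the paper's own proof: Morrey's inequality with $p=2/\ep$ (with constant bounded as $p\to\infty$, cited from Adams), combined with the two-dimensional Sobolev embedding $\|w\|_{L^p(B)}\le C\sqrt{p}\,\|w\|_{H^1(B)}$ applied to $w=\nabla\vfy$, the latter obtained for general $H^1(B)$ via a bounded extension to $H^1_0$ of a larger domain. Your additional remarks on how to obtain the $\sqrt{p}$ constant (Trudinger expansion or Gagliardo--Nirenberg) go slightly beyond what the paper records, but the overall strategy is identical.
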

\begin{proof}
We find from \cite[pp. 109--110]{Adams}, for $\ep$ small,
with $C$ independent of $\ep$,
\begin{equation}\label{4.eps}
\sup_{z,z'\in B,\,z'\ne z}\frac{|\vfy(z')-\vfy(z)|}{|z'-z|^{1-\ep}}
\le C\|\nabla\vfy\|_{L_p(B)}, \with p=2/\ep,\quad\forall\,\vfy\in
W_p^1(B).
\end{equation}
We shall also apply the Sobolev inequality,
with explicit dependence on $p$,
\begin{equation}\label{4.sob}
\|\vfy\|_{L_p(B)}\le C\,p^{1/2}\|\vfy\|_{H^1(B)}, \for p<\infty,
\quad\forall\, \vfy\in H^1(B).
\end{equation}
For $\vfy \in H_0^1(B)$ a proof was sketched in \cite[Lemma
6.4]{Thomee06}.
For the general case of $\vfy \in H^1(B)$, we make a bounded extension
of $ \vfy$
from $H^1(B)$ to $H_0^1(\wt B)$, with $\wt B\subset\overline B$,
cf., \cite[IV]{Adams} and apply \eqref{4.sob} to $H^1(\wt B)$
to complete the proof.

Employing \eqref{4.sob} yields
\[
\|\nabla\vfy\|_{L_p(B)}\le C\,p^{1/2}\big(|\vfy|_{H^1(B)}
+|\vfy|_{H^2(B)}\big),\quad \forall \vfy\in H^2(B).
\]
 Combining this with \eqref{4.eps}, using $p^{1/2}=(2/\ep)^{1/2}$,
 completes the proof.
\end{proof}

\begin{proof}[Proof of Theorem \ref{verify1}]
The proof proceeds as that of Theorem \ref{verify}, starting with
\eqref{l2-norm} and noting that the bound \eqref{4.I}  for $I$
remains valid.
In order to bound $II$, we follow the steps above, but now, instead of
\eqref{5.delst}, we show 
\begin{equation}\label{4.almsym}
|M_h \Id_h\vfy(\z_0)|\le
Ch^3\ell_h^{1/2}
 \|\vfy\|_{H^2(\Pio0)}.
\end{equation}

Using \eqref{4.almsym} as \eqref{5.delst} in \eqref{eh-bound}, we find
\begin{equation}\label{4.17}
|[\w, M_h \Id_h\vfy]|\le Ch^2\ell_h^{1/2}\|\psi\|\,\|\chi\|,
\quad\forall\psi,\chi\in S_h,
\end{equation}
and hence $|II|\le Ch^2\ell_h^{1/2}\|\psi\|.$ Together with \eqref{4.I}, this completes 
the proof of \eqref{5.qlog}.
The error estimate \eqref{lumped-nonsmooth-est-opt1} now follows from 
Lemma \ref{lumped-L2-norm-nonsmooth} and
\begin{equation*}
\|\wtE_h(t)\wtDe_hQ_hP_hv\|
\le Ct^{-1}\|Q_hP_hv\|
\le Ch^2\ell_h^{1/2}t^{-1}\|v\|, \for t>0.
\end{equation*}

It remains to show \eqref{4.almsym}. 
Let  $\widetilde \Pi_{\zeta_0'}$ be the
symmetric patch associated  with 
$\Pi_{\zeta_0}$ by the definition
of almost symmetric.  After a 
preliminary translation  of $\widetilde \Pi_{\zeta_0'}$ 
by $O(h^2)$,  we may assume 
that  $\zeta_0'=\zeta_0$.
Further, without loss of 
generality, we may assume that 
$\widetilde \Pi_{\zeta_0} \subset \Pi_{\zeta_0}$. 
In fact, if this is not the case originally, it will
be satisfied by shrinking $\widetilde \Pi_{\zeta_0}$ 
by a  suitable factor $1-ch^2$ with $c\ge0$. 
Starting with
$\widetilde \Pi_{\zeta_0}$ we may now move the 
vertices one by one  by $O(h^2)$ to obtain
$\Pi_{\zeta_0}$ in 
a finite number of steps, through a sequence
of intermediate patches $\wh\Pi_{\zeta_0} \subset \Pi_{\zeta_0}$.
 
Applying \eqref{Mh-def} we will show that for each of these
\begin{equation}\label{5.fyps}
| M_h^{\widehat\Pi_{\zeta_0}} \Id_h\vfy | \le C_\ep h^{3-\ep}
\|\vfy\|_{H^2(\Pi_{\zeta_0})}, 
\where C_\ep=C\ep^{-1/2},\ \ep>0,
\end{equation}
which implies \eqref{4.almsym},  by taking
$\ep=\ell_h^{-1}$ and  $\widehat\Pi_{\zeta_0}=\Pi_{\zeta_0}$.

Since \eqref{5.fyps} holds for the symmetric patch 
$\wt\Pi_{\zeta_0}$, by \eqref{5.delst}, 
it remains to  show that if it holds for
a given patch  $\widehat\Pi_{\zeta_0}$
then it also holds for the next  patch in the sequence.
Assuming thus that \eqref{5.fyps} 
holds for $\widehat\Pi_{\zeta_0}$, 
we consider the effect 
of moving one of its vertices, $\z_2$,
say, to $\z_2'$,
with $|\z_2'-\z_2|=O(h^2)$.

Applying Lemma \ref{5.sob} to the function $\vfy(h\cdot)$, 
with $B$  suitable, we obtain
\begin{align}\label{5.winf}
\sup_{z,z'\in\Pi_{\zeta_0},\,z'\ne
z}\frac{|\vfy(z')-\vfy(z)|}{|z'-z|^{1-\ep}}
&
 \le C_\ep h^{-1+\ep}(|\vfy|_{H^{1}(\Pi_{\zeta_0})}
+h|\vfy|_{H^{2}(\Pi_{\zeta_0})})
\\
&
\le C_\ep h^{-1+\ep}\|\vfy\|_{H^{2}(\Pi_{\zeta_0})}.
\notag
\end{align}
Moving only the vertex $\z_2$  in
$\widehat\Pi_{\zeta_0}$ 
changes 
only the triangles $\tau_1$ and $\tau_2$ and thus the 
terms corresponding to $j=1,2,3$ in \eqref{Mh-def}.
 
Letting $\tau_1'$ and $\tau_2'$ be the new
triangles,  the change in the term with $j=1$ is
then bounded,  since 
$||\tau_1'|-|\tau_1||\le Ch^3$,  by
\begin{align}
|(\om_1'-\om_1)\big(\vfy(\z_1)-\vfy(\z_0)\big)|
 &  \le C\big||\tau_1'|-|\tau_1|\big|h^{1-\ep}
\frac{|\vfy(\z_1)-\vfy(\z_0)|}{|\z_1-\z_0|^{1-\ep}} \notag \\
 &  \le C_\ep h^3\|\vfy\|_{H^{2}(\Pi_{\zeta_0})}, \notag
\end{align}
and thus by the right hand side of \eqref{5.fyps}.
The change in the term with $j=3$ is bounded in the same way. 
For $j=2$ the change is bounded by the modulus of
\begin{align}
\om_2'\big(\vfy(\z_2')-\vfy(\z_0)\big)
 & -\om_2\big(\vfy(\z_2)-\vfy(\z_0)\big) \notag   \\
 & =(\om_2'-\om_2)\big(\vfy(\z_2)-\vfy(\z_0)\big)
+\om_2'\big(\vfy(\z_2')-\vfy(\z_2)\big).  \notag 
\end{align}
The first term on the right is bounded as the terms with $j=1,3$, 
and the second is bounded, using \eqref{5.winf}, since
$|\z_2'-\z_2|\le Ch^2$, in the following way
 \[ 
|\om_2'\big(\vfy(\z_2')-\vfy(\z_2)\big)|  
    \le 
C_\ep h^2|\z_2'-\z_2|^{1-\ep} h^{-1+\ep}
\|\vfy\|_{H^{2}(\Pi_{\zeta_0})}
    \le C_\ep h^{3-\ep}\|\vfy\|_{H^{2}(\Pi_{\zeta_0})}.
\]
This 
shows that \eqref{5.fyps} remains valid after moving $\z_2$, 
which concludes the proof.
\end{proof}


More generally, 
we shall consider  families of {\it piecewise almost symmetric} 
triangulations $\{\T_h\}$, in which
 $\Om$ is partitioned into a fixed set of subdomains
  $\{\Om_k\}_{k=1}^K$, and each of these is supplied with an almost symmetric family 
$\{\T_h(\Om_k)\}$ so that
 $\T_h=\cup_{k=1}^K\T_h(\Om_k)$.  
Such families may be constructed for any convex polygonal
domain, cf.
Figure \ref{fig:piecewise-sym},
by successively refining an initial  coarse mesh, a procedure routinely 
used in  computational practice. For such meshes we show  the following result.

\begin{theorem}\label{th4.3}
If the family $\{\T_h\}$ is piecewise almost symmetric, then
\begin{equation}\label{5.qlog-1}
\|Q_h\psi\|\le C h^{3/2}
\|\psi\|,\quad\forall\psi\in S_h.
\end{equation}
Hence, for the solution of \eqref{lumped} with $v_h=P_hv$, we have
\begin{equation}\label{lumped-nonsmooth-est-opt2}
\| \tu_h(t) -  u(t)\| \le Ch^{3/2}
t^{-1}\| v \|, \for t>0.
\end{equation}
\end{theorem}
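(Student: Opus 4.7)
The plan is to follow the duality strategy of Theorems \ref{verify} and \ref{verify1}, but to split the vertex sum in \eqref{4.eh} according to whether the patch $\Pi_z$ lies entirely in one subdomain $\Om_k$ (and therefore fits into the almost symmetric family $\T_h(\Om_k)$) or straddles an interface of the partition. For given $\chi\in S_h$ I would let $\vfy=(-\De)^{-1}\chi\in\dot H^2$, so that $|\vfy|_2\le C\|\chi\|$. The splitting $\|Q_h\psi\|\le I+II$ from \eqref{l2-norm} gives $|I|\le Ch^2\|\psi\|$ exactly as in \eqref{4.I}, and I would rewrite the numerator of $II$ via \eqref{4.eh} as $[\psi,M_hI_h\vfy]=\sum_{z\in Z_h^0}\psi(z)M_h^{\Pi_z}I_h\vfy$.

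The vertices then split into \emph{interior} vertices (those $z$ with $\Pi_z\subset\Om_k$ for some $k$) and \emph{interface} vertices (whose patches cross some boundary $\partial\Om_k$). The interface vertices are confined to a strip $\Si_h$ of width $O(h)$ around $\Ga=\cup_k\partial\Om_k$, so there are only $O(h^{-1})$ of them. For interior vertices the almost symmetric bound \eqref{4.almsym} from Theorem \ref{verify1} applies to the local family $\T_h(\Om_k)$, and the same Cauchy--Schwarz summation as in \eqref{4.17} contributes $Ch^2\ell_h^{1/2}\|\psi\|\,\|\chi\|$, which is even better than the target $Ch^{3/2}$.

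The interface vertices dominate. For each such $z$ only the fact that $M_h^{\Pi_z}$ annihilates constants is available, and a Bramble--Hilbert / scaling argument on the reference patch gives the weaker estimate $|M_h^{\Pi_z}I_h\vfy|\le Ch^2|\vfy|_{H^1(\Pi_z)}$. Applying Cauchy--Schwarz over interface vertices, together with the inverse--inequality bound $\big(\sum_{z\in\Si_h}|\psi(z)|^2\big)^{1/2}\le Ch^{-1}\|\psi\|$ and the key strip estimate $|\vfy|_{H^1(\Si_h)}\le Ch^{1/2}\|\chi\|$, gives a contribution of order $Ch^2\cdot h^{-1}\cdot h^{1/2}\|\psi\|\,\|\chi\|=Ch^{3/2}\|\psi\|\,\|\chi\|$. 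Combining these with $|I|$ yields \eqref{5.qlog-1}, and the error estimate \eqref{lumped-nonsmooth-est-opt2} follows from Lemma \ref{lumped-L2-norm-nonsmooth} together with
\[
\|\wtE_h(t)\wtDe_hQ_hP_hv\|\le Ct^{-1}\|Q_hP_hv\|\le Ch^{3/2}t^{-1}\|v\|.
\]

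The main obstacle is the strip estimate $\|\na\vfy\|_{L_2(\Si_h)}\le Ch^{1/2}\|\chi\|$ for $\vfy\in\dot H^2$ with $-\De\vfy=\chi$. This I would establish by trace theory: $\Ga$ is a union of line segments, so $\na\vfy|_\Ga\in H^{1/2}(\Ga)\hookrightarrow L_2(\Ga)$ with $\|\na\vfy\|_{L_2(\Ga)}\le C\|\vfy\|_{H^2}\le C\|\chi\|$. Parametrizing $\Si_h$ by arc length $s$ along $\Ga$ and signed normal distance $t\in(-h,h)$, the fundamental theorem of calculus $\na\vfy(s,t)=\na\vfy(s,0)+\int_0^t\partial_t\na\vfy(s,\tau)\,d\tau$ and the Cauchy--Schwarz inequality give
\[
\int_{\Si_h}|\na\vfy|^2\,dx\le 2h\|\na\vfy\|^2_{L_2(\Ga)}+Ch^2\|\na^2\vfy\|^2_{L_2(\Si_h)}\le Ch\|\chi\|^2,
\]
which is the required bound. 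Everything else in the argument is routine once this scale is in hand.
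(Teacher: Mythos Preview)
Your proposal is essentially the paper's own proof: the same duality split $I+II$ from \eqref{l2-norm}, the same decomposition of the vertex sum $[\psi,M_hI_h\vfy]$ into interior (almost symmetric) patches handled by \eqref{4.almsym} and interface patches handled by a cruder bound, and the same strip estimate $\|\vfy\|_{H^1(\Om_S)}\le Ch^{1/2}\|\vfy\|_{H^2}$, which the paper merely states while you supply a trace--theoretic justification.

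One minor imprecision: your claimed interface bound $|M_h^{\Pi_z}I_h\vfy|\le Ch^2|\vfy|_{H^1(\Pi_z)}$ does not follow from Bramble--Hilbert alone, because the functional $\vfy\mapsto M_h^{\Pi_z}I_h\vfy$ involves point values of $\vfy$ and is therefore not bounded on $H^1$ in two dimensions. A correct scaling/Bramble--Hilbert argument (with the functional bounded only on $H^2$ of the reference patch, annihilating constants) yields
\[
|M_h^{\Pi_z}I_h\vfy|\le Ch^2\big(|\vfy|_{H^1(\Pi_z)}+h\,|\vfy|_{H^2(\Pi_z)}\big),
\]
which is exactly the estimate \eqref{4.3.2} that the paper derives by first bounding $|M_hI_h\vfy(\zeta_0)|\le Ch^2\|\nabla I_h\vfy\|_{L_2(\Pi_{\zeta_0})}$ and then invoking the interpolation error for $I_h$. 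The additional term $h|\vfy|_{H^2(\Pi_z)}$ is harmless: summed over the $O(h^{-1})$ interface vertices it contributes only $Ch^2\|\psi\|\,\|\chi\|$. With this correction your argument goes through and coincides with the paper's.
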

\begin{proof} 
Following again the steps in the proof of
Theorem \ref{verify}, we note that  \eqref{4.I} still holds, and it remains
 to bound $II$. For each internal vertex
$\zeta_0$ of one of the $\T_h(\Om_k)$,  the corresponding patch
$\Pio0$ is a $O(h^2)$ perturbation of a  symmetric patch, and thus \eqref{4.almsym} holds.
For  $\zeta_0\in Z_h^0$ a vertex on the boundary of two of the
$\T_h(\Om_k)$ we see that by  \eqref{Mh-def}
\[
 |M_h \chi(\zeta_0)|\le Ch^3\max_{x\in\Pio0}|\nabla\chi(x)|\le
Ch^2\|\nabla\chi\|_{L_2(\Pio0)},
\]
 and by the use of approximation properties of
the interpolation operator $\Id_h$ we get
\begin{equation}\label{4.3.2}
|M_h \Id_h\vfy(\zeta_0)|\le Ch^2\| \Id_h \vfy\|_{H^1(\Pio0)}
\le Ch^2 \big ( \|\vfy\|_{H^1(\Pio0)} + h |\vfy |_{H^2(\Pio0)} \big ).
\end{equation}
Using \eqref{4.almsym} and \eqref{4.3.2}  as earlier \eqref{5.delst} in \eqref{eh-bound},
 we conclude
\begin{align*}
|[\w, M_h \Id_h\vfy]|
\le Ch^2\ell_h^{1/2}\|\psi\|\,|\vfy|_2
+Ch\|\psi\|\,\|\vfy\|_{H^1(\Om_S)},
\end{align*}
where $\Om_S$ is a strip of width $O(h)$ around the interface between
the 
subdomains $\Om_k$ of $\Om$. Using now the inequality
$\|\vfy\|_{H^1(\Om_S)}\le Ch^{1/2}\|\vfy\|_{H^2(\Om)}
\le Ch^{1/2}\|\chi\|$,
we get
\begin{equation}\label{4.21}
|[\w, M_h \Id_h\vfy]|
\le Ch^{3/2}\|\psi\|\,\|\chi\|, \quad\forall\psi,\chi\in S_h,
\end{equation}
and hence
$|II|\le Ch^{3/2}\|\psi\|$. Together with \eqref{4.I}, this completes the proof of \eqref{5.qlog-1}.
The error estimate \eqref{lumped-nonsmooth-est-opt2} now follows by
Lemma \ref{lumped-L2-norm-nonsmooth} and
\[
\|\wtE_h(t)\wtDe_hQ_hP_hv\|
\le Ct^{-1}\|Q_hP_hv\|
\le Ch^{3/2}t^{-1}\|v\|,\for t>0.
\qedhere
\]
\end{proof}

We remark that the operator $M_h$ used here, modulo a constant
factor, is the same as the operator $\De_h^*$ in \cite{clt11}. The arguments
in the proofs of Theorems \ref{verify1} and \ref{th4.3} therefore show that the following
result holds for the lumped mass method.

\begin{corollary} \label{4.1}
Assume that $\{\T_h\}$ is almost or piecewise almost symmetric.
Then the nonsmooth data error estimates  for the lumped mass method,
corresponding to \eqref{5.qlog} 
and \eqref{5.qlog-1}, 
respectively, hold.
\end{corollary}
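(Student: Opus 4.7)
The plan is to mimic the duality argument of Theorems \ref{verify1} and \ref{th4.3}, substituting the lumped mass operators for those of the finite volume element method. The key observation, already flagged in the remark preceding the corollary, is that the operator $\De_h^*$ of \cite{clt11}, which represents the lumped mass quadrature functional in the form $\vep_h^\circ(\psi,\chi)=[\psi,\De_h^*\chi]$, coincides with $M_h$ of \eqref{Mh-definition} up to a multiplicative constant. Consequently the pointwise estimates \eqref{4.almsym} and \eqref{4.3.2} on $M_h \Id_h\vfy$, which are the only ingredients in the proofs of Theorems \ref{verify1} and \ref{th4.3} that used the almost or piecewise almost symmetry of $\{\T_h\}$, carry over without change with $\De_h^*$ in place of $M_h$.

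First I would recall from \cite{clt11} the lumped mass quadrature error operator $Q_h^\circ:S_h\to S_h$, defined by $(\nabla Q_h^\circ\psi,\nabla\chi)=\vep_h^\circ(\psi,\chi)$ for all $\chi\in S_h$, together with the lumped mass analogues of Lemmas \ref{quad-error-lemma} and \ref{q_h-stability}, which give in particular $\|\nabla Q_h^\circ\psi\|\le Ch\|\psi\|$. Next I would write $\|Q_h^\circ\psi\|=\sup_{\chi\in S_h}(Q_h^\circ\psi,\chi)/\|\chi\|$ and, for each test $\chi\in S_h$, introduce $\vfy\in\Hdot^2$ solving $-\De\vfy=\chi$, so that $|\vfy|_2\le C\|\chi\|$. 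Splitting as in \eqref{l2-norm},
\begin{equation*}
(\nabla Q_h^\circ\psi,\nabla\vfy)
=(\nabla Q_h^\circ\psi,\nabla(\vfy-\Id_h\vfy))
+(\nabla Q_h^\circ\psi,\nabla \Id_h\vfy)
=I+II,
\end{equation*}
the interpolation term $I$ is handled exactly as in \eqref{4.I} and gives $|I|\le Ch^2\|\psi\|\,\|\chi\|$. For $II$ I would rewrite $(\nabla Q_h^\circ\psi,\nabla \Id_h\vfy)=\vep_h^\circ(\psi,\Id_h\vfy)=[\psi,\De_h^*\Id_h\vfy]$, so that the pointwise bound \eqref{4.almsym} (in the almost symmetric case), or \eqref{4.almsym} combined with \eqref{4.3.2} and the strip estimate leading to \eqref{4.21} (in the piecewise almost symmetric case), applied with $\De_h^*$ in place of $M_h$, yields $|II|\le Ch^2\ell_h^{1/2}\|\psi\|\,\|\chi\|$ or $|II|\le Ch^{3/2}\|\psi\|\,\|\chi\|$ respectively. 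This produces the lumped mass analogues of \eqref{5.qlog} and \eqref{5.qlog-1}. Finally, inserting these bounds into the lumped mass analogue of Lemma \ref{lumped-L2-norm-nonsmooth} (proved in \cite{clt11} as Theorem~4.1 there), together with the smoothing estimate for the lumped mass solution operator, delivers the claimed error estimates.

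The main potential obstacle is not mathematical but notational: one must verify that the identification of $\De_h^*$ with a constant multiple of $M_h$ is exact, so that the constants in \eqref{4.almsym} and \eqref{4.3.2} only change by a harmless factor. Given the explicit formulas for both operators (cf.\ Lemma \ref{lemma-Mh-def0} here and the corresponding computation in \cite{clt11}), this is a routine check, after which every step of Theorems \ref{verify1} and \ref{th4.3} transfers mechanically with $\lla\cdot,\cdot\rra$, $\wtDe_h$, $Q_h$ replaced by $(\cdot,\cdot)_h$, $\wtDe_h^\circ$, $Q_h^\circ$, so no new idea is required.
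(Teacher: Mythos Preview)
Your proposal is correct and follows essentially the same approach as the paper: the paper simply observes that $M_h$ coincides with the operator $\De_h^*$ of \cite{clt11} up to a constant factor, so the arguments of Theorems \ref{verify1} and \ref{th4.3} transfer verbatim to the lumped mass setting. Your write-up spells out those transferred steps in more detail than the paper does, but the idea is identical.
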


We finish this section by remarking that, in one space dimension,
the full $O(h^2)$ $L_2$ norm bound \eqref{higher_error}
 for $Q_h$ holds also
for almost symmetric partitions, without a logarithmic factor.
Let  $\Omega =(0,1)$ be partitioned by $0=x_0 < x_1 <
\dots < x_{N_h+1}=1$. Denote now $\T_h=\{\tau_i\}_{i=1}^{N_h+1}$, with
$\tau_i=[x_{i-1},x_i]$, and let $S_h$ be the set of
the continuous piecewise linear functions over  $\T_h$, vanishing 
at $x=0,1$. We set $h_i=x_i-x_{i-1}$ and $h=\max_i h_i$. The control
volumes are $V_i= (x_i-h_i/2,x_i+h_{i+1}/2)$ and
$J_h\psi(x)=\psi(x_i)$ for $x \in V_i$. We say that $\T_h$ is almost
symmetric if $|h_{i+1}-h_i|\le Ch^2$ for all $i$.

Simple calculations show, with $(\chi,\psi)=\int_0^1\chi\psi\,dx$ and
$\lla\chi,\psi\rra=(\chi,J_h\psi)$, for $\chi,\psi\in S_h$,
\[
\vep_h(\psi,\chi)=
\lla\psi,\chi\rra-(\psi,\chi)
=-  \dfrac1{24}
\sum_{i=1}^{N_h}\psi_i
\big (h_{i+1}(\chi_{i+1}-\chi_{i})-
h_i(\chi_{i}-\chi_{i-1}) \big ),
\]
where $w_i=w(x_i)$ for a function $w$ on $\Om$,
and the one--dimensional version of \eqref{Mh-def} at
$x_i$  becomes
\[
M_h\Id_h\vfy(x_i)= -\dfrac1{24}\big(h_{i+1}(\vfy_{i+1}-\vfy_{i})+
h_i(\vfy_{i-1}-\vfy_{i})\big),\quad i=1,\dots, N_h.
\]
The crucial step to prove \eqref{higher_error} is then to show an
analogue of \eqref{5.delst}, in this case
\begin{equation}\label{5.est}
| M_h \Id_h\vfy(x_i) | \le C h^{5/2}
\|\vfy\|_{H^2(\Pioi)}, \quad i=1,\dots,N_h,
 \with \Pioi=\tau_i\cup \tau_{i+1}, 
\end{equation}
from which \eqref{higher_error} follows as earlier.
Using the Taylor formula
\[
\vfy(x)=\vfy(x_i)+(x-x_i)\vfy'(x_i)+\int_{x_i}^x(x-y)\,\vfy''(y)\,dy,
\]
we find easily
\[
M_h\Id_h\vfy(x_i)=-\frac1{24}
(h_{i+1}^2-h_i^2)\vfy'(x_i)+O\big(h^{5/2}\,\|\vfy''\|_{L_2(\Pioi)}\big),
\quad i=1,\dots, N_h.
\]
By the almost symmetry,
$|h_{i+1}^2-h_i^2|\le C h^3$ and by the
Sobolev type inequality
\[
|\vfy'(x_i)|\le Ch^{-1/2}
\big(\|\vfy'\|_{L_2(\Pioi)}
+h\|\vfy''\|_{L_2(\Pioi)}\big)\le Ch^{-1/2}\|\vfy\|_{H^2(\Pioi)},
\]
for $i=1, \dots, N_h$, we now conclude that \eqref{5.est} holds.


\section{Examples of nonoptimal nonsmooth initial data estimates}\label{sec:counterexample}


In this section we present two examples where the necessary and sufficiency
 condition \eqref{opt-assumption} for an optimal $O(h^2)$ 
nonsmooth data error estimate for $t>0$ is not
satisfied. In the first example we construct a family of nonsymmetric meshes
$\{\T_h \}$ for which the norm on the left hand side of
\eqref{opt-assumption}
is bounded below by $ch$,
thus showing that the first order error bound of
Theorem \ref{t4.3} is the best possible. In the second example we
exhibit a piecewise symmetric mesh for which this norm
is bounded below by $ch^{3/2}$, implying that the error estimate of 
Theorem \ref{th4.3} is best possible.

\begin{figure}[t]
\centering{
\includegraphics*[height=0.23\textheight,width=0.6\textwidth]
{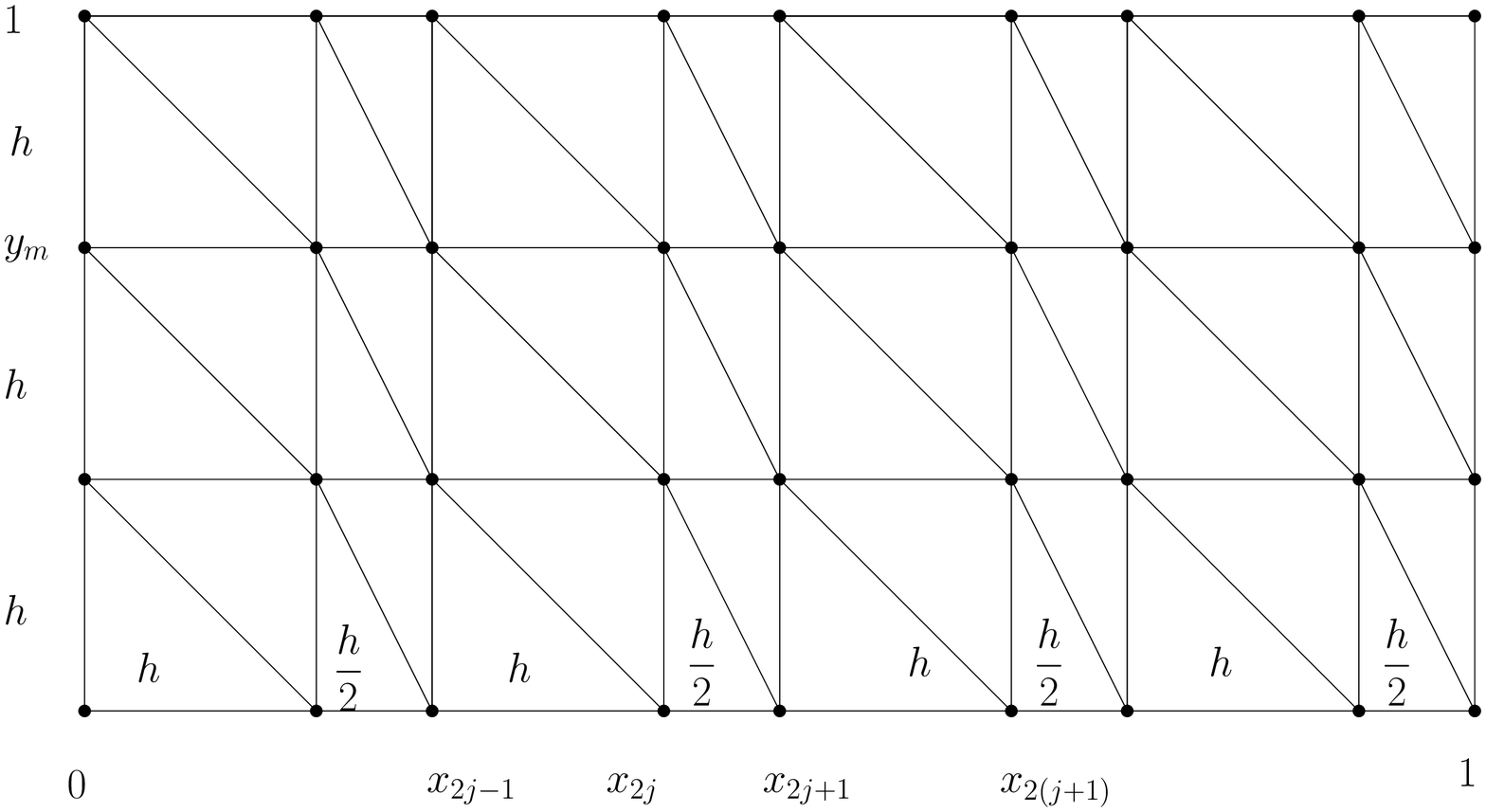}\hspace{5pt}
\includegraphics*[width=0.3\textwidth]{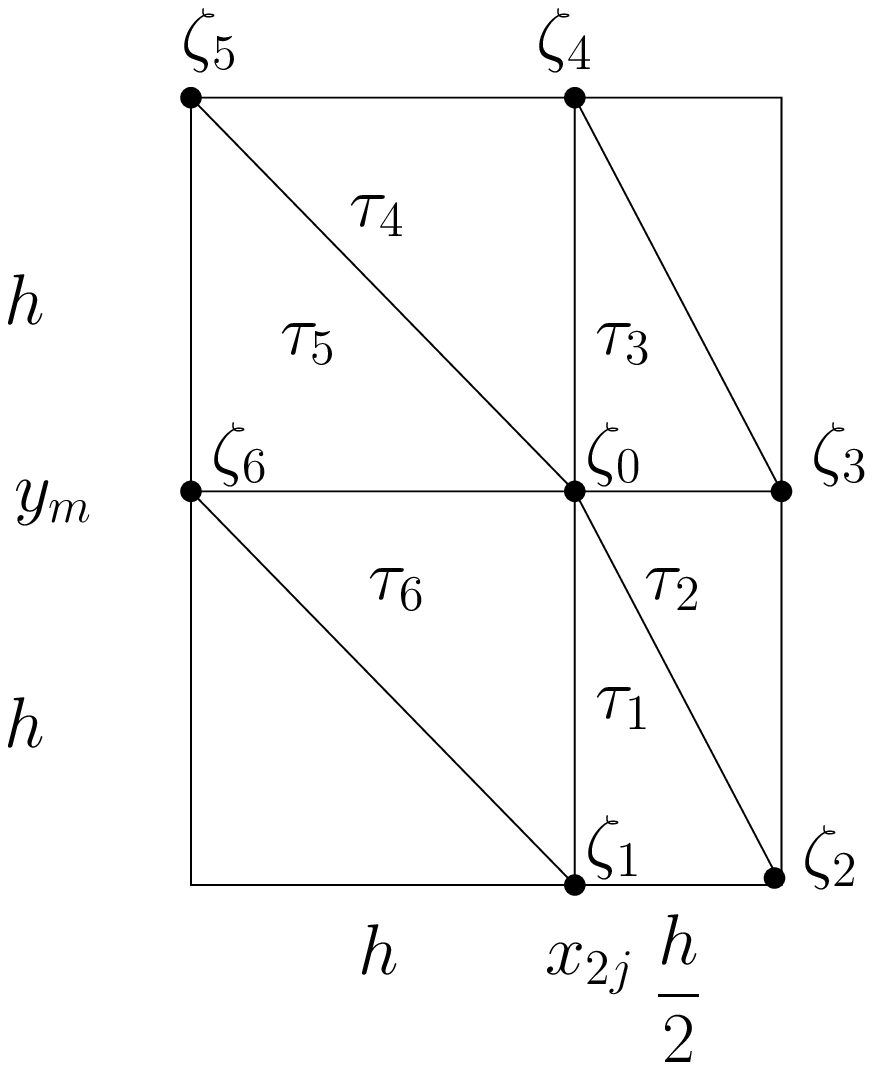}
} \caption{{\it Left}: A nonsymmetric mesh.
{\it Right}: A nonsymmetric patch $\Pio0$, around $\z_0$. } \label{fig:nonsym-mesh3}
\end{figure}

In our first example we choose $\Om=(0,1)\times(0,1)$ and introduce a 
quasiuniform family of triangulations $\{\T_h\}$ of $\Om$ as follows. Let $N$ be a 
positive integer divisible by $4$,  $h=4/(3N)$, $x_0=0$, and set, for 
$j=1,\dots,N$ and
$m=0,1,\dots,M=\frac34N$,
\begin{equation}\label{mesh-3}\begin{split}
&x_j=x_{j-1}+\begin{cases} \tfrac12 h,\ &\for j \text{ odd},\\
 h,\ &\for j \text{ even},
 \end{cases}\ \text{ and }\quad y_m=mh.
\end{split}\end{equation}
We split the rectangle
$(x_j,x_{j+1})\times (y_{m},y_{m+1})$ into two triangles 
by connecting the nodes $(x_j,y_m)$ and $(x_{j+1},y_{m-1})$, see 
\mbox{Figure~\ref{fig:nonsym-mesh3}.} This defines a triangulation $\T_h$ that is
not symmetric at any vertex.

Let now  $\z_0=(x_{2j},y_m)$, $\z_0\in Z_h^0$, and let $\Pio0$ be the 
corresponding nonsymmetric patch shown in Figure \ref{fig:nonsym-mesh3},
with vertices $\{\z_j\}_{j=1}^6$.  Let $\tau_j$ be the triangle 
in $\Pio0$ with vertices $\z_0$, $\z_j$, $\z_{j+1}$, where $\z_7=\z_1$.
We then  have $|\tau_j|=\tfrac14 h^2$, for $j=1,2,3$, and
$|\tau_j|=\tfrac12 h^2$, for $j=4,5,6$.
Thus, using   \eqref{Mh-def}, for $\psi\in S_h$, we obtain with $\psi_j=\psi(\z_j)$,
\begin{equation}\label{Mh_psi}
\begin{split}
M_h\psi(\z_0)=&-\frac1{54}\sum_{j=1}^6\om_j(\psi_j-\psi_0)
=-\frac1{54}\dfrac{h^2}4\Big(3(\psi_1+\psi_4-2\psi_0)
\\
&+2(\psi_2-\psi_0)+2(\psi_3-\psi_0)+4(\psi_5-\psi_0)
+4(\psi_6-\psi_0)\Big).
\end{split}
\end{equation}
Because $\nabla\psi$ is piecewise constant over $\Pio0$, 
we easily see that \eqref{Mh_psi} implies
\begin{equation}\label{Mh_bound}
|M_h\psi(\z_0)|
\le
Ch^2\|\nabla \psi\|_{L_2(\Pio0)},
\quad\forall \psi\in S_h.
\end{equation}

For a smooth function $\vfy$ we have, by  Taylor expansion, 
\[\vfy(\z_j)-\vfy(\z_0)=\nabla\vfy(\z_0)\cdot(\z_j-\z_0)+O(h^2),
\]
where $\z_j$ is considered as a vector with components its Cartesian coordinates
and the dot denotes the Euclidean inner product in $\R^2$.
Employing  this in \eqref{Mh_psi}, we find, after a simple calculation,
\begin{equation}\label{Mh_psi-1}
 M_h\Id_h\vfy(\z_0)=\dfrac{h^3}{108}\nabla\vfy(\z_0)\cdot(3,-1)
+O(h^4).
\end{equation}
 
Let $\fy_1(x,y)=2\sin(\pi x)\sin(\pi y)$ be the eigenfunction of $-\De$,  
corresponding to the smallest eigenvalue $\la_1=2\pi^2$. We then easily find
that
 $\nabla\fy_1(1/4,1/4)\cdot(3,-1)=2\pi$. Hence, there exists a square 
$\mathcal P=[1/4-d,1/4+d]^2$, with $0<d< 1/4$,  such that 
\begin{equation}\label{fy1-bound}
 \nabla\fy_1(z)\cdot(3,-1)\ge 1,\quad\forall z\in\mathcal P.
\end{equation}
Letting now for $z\in Z_h^0\cap \mathcal P$  we then have that
$M_h\Id_h\fy_1(z)\ge ch^3$, $c>0$, for $h$ small. 
 We shall prove the following proposition.
\begin{proposition}
\label{7.prop2} Let $\T_h$ be defined by \eqref{mesh-3}, 
$\mathcal P_h=\big\{z=(x_{2j},y_m)\in\mathcal P\big\}$ and consider
the initial value problem \eqref{lumped} with 
$v_h=\sum_{z\in\mathcal P_h}\Phi_{z}$,
where $\Phi_{z}\in S_h$ is the nodal basis function of $S_h$ at $z$.
Then we have, for $h$ small,
\[
\|\wtE_h(t)\wtDe_h Q_hv_h\| \ge c(t) h\|v_h\|, \with c(t)>0,\for
t>0.
\]
\end{proposition}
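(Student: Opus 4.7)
The plan is to bound $\|\wtE_h(t)\wtDe_h Q_h v_h\|$ from below by projecting onto the first discrete eigenfunction $\tilde\phi^h_1$ of $-\wtDe_h$. Since $\wtDe_h$ is selfadjoint in $\lla\cdot,\cdot\rra$ and commutes with $\wtE_h(t)$, the eigenrelation $-\wtDe_h\tilde\phi^h_1 = \tilde\lambda^h_1\tilde\phi^h_1$ gives $\wtE_h(t)\wtDe_h\tilde\phi^h_1 = -\tilde\lambda^h_1 e^{-\tilde\lambda^h_1 t}\tilde\phi^h_1$. Combined with the norm equivalence $\|\cdot\|\sim\tribar\cdot\tribar$ and the $\lla\cdot,\cdot\rra$--orthonormality of the $\tpj$'s, this yields
\[
\|\wtE_h(t)\wtDe_h Q_h v_h\| \;\ge\; c\,\tilde\lambda^h_1 e^{-\tilde\lambda^h_1 t}\bigl|\lla Q_h v_h,\tilde\phi^h_1\rra\bigr|.
\]
Since $\tilde\lambda^h_1\to\la_1$ as $h\to 0$, the prefactor is bounded below by $c\,e^{-\la_1 t}$ for $h$ small, so the task reduces to showing $|\lla Q_h v_h,\tilde\phi^h_1\rra|\ge ch$.

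To this end, I would combine the defining identity $(\nabla Q_h v_h,\nabla\chi)=\vep_h(v_h,\chi)=[v_h,M_h\chi]$ from \eqref{q_h-def} and \eqref{Mh-def0} with the variational form $(\nabla\tilde\phi^h_1,\nabla\psi)=\tilde\lambda^h_1\lla\tilde\phi^h_1,\psi\rra$ of the eigenrelation, which follows from \eqref{bDe_h-def}. This gives
\[
\tilde\lambda^h_1\lla Q_h v_h,\tilde\phi^h_1\rra \;=\; [v_h,M_h\tilde\phi^h_1] \;=\; \sum_{z\in\mathcal P_h} M_h\tilde\phi^h_1(z),
\]
where the last step uses that $v_h(z)=1$ for $z\in\mathcal P_h$ and $v_h(z)=0$ for $z\in Z_h^0\setminus\mathcal P_h$.

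The final step is to replace $\tilde\phi^h_1$ by the nodal interpolant $\Id_h\fy_1$, writing $\tilde\phi^h_1=\Id_h\fy_1+r_h$, and to handle the two pieces separately. For the main term, \eqref{Mh_psi-1} together with \eqref{fy1-bound}, as already noted just before the proposition, gives $M_h\Id_h\fy_1(z)\ge ch^3$ for every $z\in\mathcal P_h$ and $h$ small; since the mesh \eqref{mesh-3} yields $|\mathcal P_h|\ge ch^{-2}$, summation produces $\sum_{z\in\mathcal P_h} M_h\Id_h\fy_1(z) \ge ch$. For the remainder, \eqref{Mh_bound} gives $|M_h r_h(z)|\le Ch^2\|\nabla r_h\|_{L_2(\Pi_z)}$, so by Cauchy--Schwarz and the bounded overlap of the patches $\{\Pi_z\}_{z\in\mathcal P_h}$,
\[
\Bigl|\sum_{z\in\mathcal P_h} M_h r_h(z)\Bigr| \;\le\; Ch^2|\mathcal P_h|^{1/2}\|\nabla r_h\| \;\le\; Ch\|\nabla r_h\|.
\]
The main obstacle is therefore to establish $\|\nabla r_h\|\le Ch$, i.e., the $H^1$--convergence of the finite volume first eigenfunction to $\fy_1$ at rate $O(h)$; this can be done by combining the standard estimate $\|\nabla(\fy_1-\Id_h\fy_1)\|\le Ch$ with a finite volume eigenfunction error bound derived in the usual way from the smooth--data estimate of Theorem \ref{lumped-L2-norm-smooth} and the smoothing estimates of Lemma \ref{bEh-estimates}. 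Once this is in hand, the remainder contributes only $O(h^2)$, the main term dominates, and since $\|v_h\|\le C$ the proposition follows with $c(t)=c\,e^{-\la_1 t}$.
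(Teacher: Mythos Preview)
Your argument is correct and follows essentially the same route as the paper: project onto the first discrete eigenmode via Parseval, rewrite the resulting coefficient as $[v_h,M_h\tilde\phi_1^h]$ using \eqref{eq5.4}, split $\tilde\phi_1^h=I_h\fy_1+r_h$, use \eqref{Mh_psi-1}--\eqref{fy1-bound} and the count $|\mathcal P_h|\ge ch^{-2}$ for the main term, and \eqref{Mh_bound} for the remainder. The one point where the paper proceeds differently is the $H^1$ bound $\|\nabla r_h\|\le Ch$: rather than deriving it from the parabolic smooth--data and smoothing estimates as you propose, the paper simply invokes the known finite volume eigenfunction estimate $\|\tilde\phi_1^h-\fy_1\|_{H^1}=O(h)$ from \cite{Liang_MZ} (together with the obvious $\|\fy_1-I_h\fy_1\|_{H^1}=O(h)$), which is more direct.
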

\begin{proof}
Letting $\wt\la^h_j$ and $\tpj$ be the eigenvalues and eigenfunctions of 
$-\wt\De_h$, and using Parseval's relation in $S_h$, equipped with
$\lla\cdot,\cdot\rra$, we have
\begin{equation}\label{Pro5.5}
\tribar\wtE_h(t) \wtDe_hQ_hv_h\tribar^2=\sum_{j=1}^{N_h} e^{-2t\tlj}
\lla\wt\De_hQ_hv_h,\tpj\rra^2
\ge e^{-2t\tilde\la_1^h}
\lla\wt\De_hQ_hv_h,\wt\phi^h_1\rra^2.
\end{equation}
 Combining 
\eqref{bDe_h-def},
 \eqref{q_h-def} and \eqref{Mh-def0}, we find
\begin{equation}\label{eq5.4}
-\lla\wtDe_h Q_h v_h,\psi\rra=(\nabla Q_hv_h,\nabla\psi)
=\vep_h(v_h,\psi)
=[ v_h,M_h\psi],\quad\forall \psi\in S_h.
\end{equation}
Note now that  for $z\in\mathcal P_h$, the corresponding patch $\Pi_z$ has the 
same 
form as the patch $\Pio0$ considered above. Thus employing \eqref{Mh_bound} for 
$\z_0=z$ we get, for $\psi\in S_h$,
\begin{equation}\label{Mh_bound-1}
|[ v_h, M_h\psi]|\le \sum_{z\in\mathcal P_h}|[\Phi_{z},M_h\psi]|
= \sum_{z\in\mathcal P_h}|M_h\psi(z)|\\
\le
Ch\|\nabla\psi\|,
\end{equation}
where in the last inequality we have used the fact that 
the number of points in $\mathcal P_h$ is $O(N^2)=O(h^{-2})$.
We recall from 
\cite{Liang_MZ} that
\begin{equation*}
\|\tilde\phi^h_1 -\fy_1 \|_{H^1} = O(h) \quad \mbox{ and } \quad
\tilde\la_1^h\to \la_1,
 \mbox{ as }  h\to 0, 
\end{equation*}
and, since obviously
$\|\fy_1-\Id_h\fy_1\|_{H^1}=O(h)$, 
 \eqref{Mh_bound-1}  with
$\psi=\tilde\phi_1^h-\Id_h\phi_1$, gives
\begin{equation}\label{Mh_bound-2}
|[ v_h,M_h(\tilde\phi_1^h-\Id_h\phi_1)]|
\le Ch\|\nabla(\tilde\phi_1^h-\Id_h\phi_1)\| \le Ch^2.
\end{equation}
For every $z\in \mathcal P_h$, \eqref{fy1-bound} holds, and thus, using
 \eqref{Mh_psi-1} with $\vfy=\fy_1$ and $\z_0=z$, we obtain,  
 for $h$ small, since  the number of vertices in 
$\mathcal P_h$ is bounded below by $cN^2$,
\[
[ v_h,M_h\Id_h\fy_1]
=\sum_{z\in\mathcal P_h}M_h\Id_h\fy_1(z)\ge ch^3N^2=ch, \with c>0.
\]
Combining  this with \eqref{Mh_bound-2}, we obtain, for $h$ small,
\[
[ v_h,M_h\tilde \fy_1^h]\ge [ v_h,M_h \Id_h\fy_1]
-|[ v_h,M_h(\tilde \fy_1^h-\Id_h\fy_1)]|\ge ch- Ch^2\ge ch, \with c>0.
\]
Since $\tribar v_h\tribar=O(1)$,   \eqref{Pro5.5} and \eqref{eq5.4} now show
\[
\tribar\wtE_h(t) \wtDe_hQ_hv_h\tribar\ge e^{-t\tilde\la_1^h}
[ v_h,M_h\tilde \fy_1^h]\ge
c(t)h\,\tribar v_h\tribar, \for t>0.
\]
Since $\tribar\cdot\tribar$ and $\|\cdot\|$ are 
equivalent norms, the proof is complete.
\end{proof}

It follows from Proposition \ref{7.prop2} and Lemma \ref{lumped-L2-norm-nonsmooth}  that the 
highest order of
convergence that can hold,
 uniformly for all $v\in L_2$, 
 and for any  family of triangulations $\{\T_h\}$, is
$O(h)$, i.e.,  Theorem \ref{t4.3old} is  best possible, in this case.

\begin{figure}[t]
\centering{
\includegraphics*[width=0.6\textwidth]{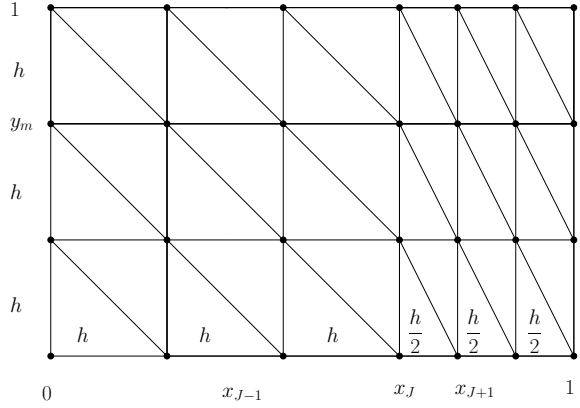}
} \caption{A piecewise symmetric mesh.} \label{fig:nonsym-mesh}
\end{figure}

We now turn to our second example, in which
 $\{\T_h\}$ is a piecewise  symmetric family.
Let again $\Om=(0,1)\times(0,1)$ and consider a 
triangulation $\T_h$ of $\Om$, 
where the nodes $(x_j,y_m)$ are given 
as follows. With $J$ a positive integer, 
 let $N=7J$, $M=4J$ and $h=1/(4J)$, and set for $j=0,\dots,N$ and $m=0,\dots,M$,
\begin{equation}\label{mesh-1}
x_j=\begin{cases}
jh, &\for 0\le j\le J,\\
1/4+(j-J)h/2, &\for J<j\le N,
\end{cases}
\quad \andy \quad 
 y_m=mh,
\end{equation}
 see Figure \ref{fig:nonsym-mesh}.
This time we consider the set of vertices in $\mathcal P$ with $x=1/4$ and 
prove the following proposition.
\begin{proposition}
\label{7.prop} Let $\T_h$ be defined by \eqref{mesh-1} and $\mathcal
P_h^\prime=\big\{z=(x_J,y_m) \in\mathcal P\big\}$. For the initial value problem
\eqref{lumped}, with $v_h=\sum_{z\in\mathcal P_h^\prime}\Phi_{z}$, where
$\Phi_{z}\in S_h$ is the nodal basis function of $S_h$ at  $z$, we
have, for $h$ small,
\[
\|\wtE_h(t)\wtDe_h Q_hv_h\| \ge c(t) h^{3/2}\|v_h\|, \with
c(t)>0,\for t>0.
\]
\end{proposition}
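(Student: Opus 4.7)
The plan is to mimic the proof of Proposition~\ref{7.prop2}, with the following essential changes dictated by the different geometry. Here the only vertices at which $M_h\Id_h\vfy$ fails to be of the super-convergent order $O(h^3 \ell_h^{1/2})\|\vfy\|_{H^2(\Pi_z)}$ established in Theorem~\ref{verify1} are those lying on the interface line $\{x=1/4\}$, since away from this line each patch $\Pi_z$ is a translation/rotation of a symmetric patch. Thus the cancellation machinery of Theorem~\ref{verify1} collapses $[v_h,M_h\Id_h\vfy]$ to a sum over the $O(h^{-1})$ interface nodes in $\mathcal P_h'$.

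First I would analyze an interior interface vertex $\z_0=(1/4,y_m)$. Its patch $\Pio0$ consists of triangles of area $\tfrac12 h^2$ on the left and $\tfrac18 h^2$ on the right, and it is mirror-symmetric about the horizontal line $y=y_m$ but \emph{not} about $x=1/4$. Substituting a Taylor expansion $\vfy(\z_j)=\vfy(\z_0)+\nabla\vfy(\z_0)\cdot(\z_j-\z_0)+O(h^2)$ in the representation \eqref{Mh-def} for $M_h^{\Pio0}\Id_h\vfy$, the horizontal reflection symmetry forces the $\partial_y\vfy(\z_0)$ contributions to cancel, while the $\partial_x\vfy(\z_0)$ contributions survive because the left/right triangles have unequal areas. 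A direct computation then yields
\[
M_h\Id_h\vfy(\z_0)=\kappa\, h^{3}\,\partial_x\vfy(\z_0)+O(h^{4}),
\]
for an explicit nonzero constant $\kappa$. Taking $\vfy=\fy_1=2\sin(\pi x)\sin(\pi y)$, one has $\partial_x\fy_1(1/4,y)=\pi\sqrt 2\,\sin(\pi y)\ge c>0$ on $\mathcal P=[1/4-d,1/4+d]^2$, for a suitable small $d$. Summing over the $\asymp h^{-1}$ vertices of $\mathcal P_h'$ gives the crucial lower bound
\[
[v_h,M_h\Id_h\fy_1]=\sum_{z\in\mathcal P_h'}M_h\Id_h\fy_1(z)\ge c h^{3}\cdot h^{-1}=c h^{2},\quad c>0.
\]

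Next I would control the discrepancy between $\Id_h\fy_1$ and the discrete eigenfunction $\tilde\fy_1^h$. At an interface vertex the diameter of $\Pi_z$ is still $O(h)$, so a local Bramble--Hilbert argument (exactly as for \eqref{Mh_bound}) yields $|M_h\psi(z)|\le Ch^2\|\nabla\psi\|_{L_2(\Pi_z)}$. Summing over the $O(h^{-1})$ nodes in $\mathcal P_h'$ using Cauchy--Schwarz,
\[
|[v_h,M_h\psi]|\le \sum_{z\in\mathcal P_h'}|M_h\psi(z)|\le C h^{2}\bigl(h^{-1}\bigr)^{1/2}\|\nabla\psi\|=Ch^{3/2}\|\nabla\psi\|.
\]
Applying this to $\psi=\tilde\fy_1^h-\Id_h\fy_1$ and using $\|\tilde\fy_1^h-\Id_h\fy_1\|_{H^1}=O(h)$ from \cite{Liang_MZ}, the correction is $O(h^{5/2})=o(h^2)$, so $[v_h,M_h\tilde\fy_1^h]\ge ch^2$ for $h$ small.

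Finally I would close the argument exactly as in Proposition~\ref{7.prop2}. By Parseval and \eqref{eq5.4},
\[
\tribar\wtE_h(t)\wtDe_hQ_hv_h\tribar^{2}\ge e^{-2t\tilde\la_1^h}\,[v_h,M_h\tilde\fy_1^h]^{2}\ge c(t)^{2}h^{4}.
\]
A simple count (disjoint-up-to-$O(1)$ overlap of the hat functions along the interface) gives $\|v_h\|\asymp h^{1/2}$, so the lower bound becomes $\|\wtE_h(t)\wtDe_hQ_hv_h\|\ge c(t)h^{3/2}\|v_h\|$, after using the equivalence of $\|\cdot\|$ and $\tribar\cdot\tribar$ on $S_h$. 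The main obstacle is the first step: one must carefully verify, from the explicit patch geometry at an interface vertex, that the leading $O(h^3)$ coefficient $\kappa$ of $\partial_x\vfy(\z_0)$ in the Taylor expansion of $M_h\Id_h\vfy(\z_0)$ is nonzero; all the remaining steps are robust analogues of the corresponding estimates in Proposition~\ref{7.prop2}.
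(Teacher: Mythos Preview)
Your overall strategy coincides with the paper's: project onto the first discrete eigenfunction via \eqref{Pro5.5}--\eqref{eq5.4}, obtain a lower bound $[v_h,M_h\Id_h\fy_1]\ge ch^2$ from a Taylor expansion at the interface vertices, control $[v_h,M_h(\tilde\fy_1^h-\Id_h\fy_1)]$ by $Ch^{5/2}$ using $|M_h\psi(z)|\le Ch^2\|\nabla\psi\|_{L_2(\Pi_z)}$ summed over $O(h^{-1})$ nodes, and finish with $\tribar v_h\tribar\asymp h^{1/2}$.

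Two concrete inaccuracies in your first step should be fixed. First, the right-hand triangles at an interface vertex have area $\tfrac14 h^2$, not $\tfrac18 h^2$ (the right rectangles are $\tfrac{h}{2}\times h$). Second, the patch is \emph{not} mirror-symmetric about $y=y_m$: the diagonals of the rectangles all run the same way, so the $\partial_y\vfy(\z_0)$ contribution does \emph{not} cancel. The paper avoids redoing this computation by observing that the interface patch is literally the same patch $\Pio0$ as in the first example (Figure~\ref{fig:nonsym-mesh3}, right), so \eqref{Mh_psi}--\eqref{Mh_psi-1} apply verbatim and give
\[
M_h\Id_h\vfy(\z_0)=\tfrac{h^3}{108}\,\nabla\vfy(\z_0)\cdot(3,-1)+O(h^4).
\]
Positivity on $\mathcal P$ then comes from \eqref{fy1-bound}, already established there. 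With this correction your argument is complete and matches the paper's.
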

\begin{proof}
Again, using 
\eqref{Pro5.5} and \eqref{eq5.4},  we have,
\begin{equation}\label{5a}
\tribar\wtE_h(t) \wtDe_hQ_hv_h\tribar^2\ge e^{-2t\tilde\la_1^h}
[v_h,M_h\tilde\fy_1^h]^2.
\end{equation}
For $z\in\mathcal P_h^\prime$, the corresponding patch $\Pi_z$  has the 
same form as the patch $\Pio0$ considered above, see Figure \ref{fig:nonsym-mesh3} (right). 
Thus employing \eqref{Mh_bound} for 
$\z_0=z$ and taking into account that the number of vertices in $\mathcal P_h'$ is $O(N)$
we now obtain,  for $\psi\in S_h$,

\begin{equation*}
|[ v_h, M_h\psi]|\le \sum_{z\in\mathcal
P_h^\prime}|[\Phi_{z},M_h\psi]|
=\sum_{z\in\mathcal P_h^\prime}|M_h\psi(z)|
\le Ch^{3/2}\|\nabla\psi\|.
\end{equation*}
Similarly to  \eqref{Mh_bound-2} this now shows
\begin{equation}\label{Mh_bound-3}
|[ v_h,
M_h(\tilde\fy_1^h-\Id_h\fy_1)]|\le Ch^{5/2},
\end{equation}
and, again using \eqref{Mh_psi-1}, for $h$ small,
\[[ v_h,M_h\Id_h\fy_1]
=\sum_{z\in\mathcal P_h^\prime}M_h\Id_h\fy_1(z)\ge ch^3J=ch^2, \with
c>0.
\]
Combined with \eqref{Mh_bound-3} this gives, for $h$  small,
\begin{equation}\label{5.15}
[ v_h,M_h\tilde \fy_1^h]\ge ch^2- Ch^{5/2}\ge ch^2, \with c>0.
\end{equation}
Since $\tribar v_h\tribar=O(h^{1/2})$ we obtain from \eqref{5a} and \eqref{5.15}
\[
\tribar\wtE_h(t) \wtDe_hQ_hv_h\tribar\ge
c(t)h^2\ge c(t)h^{3/2}\,\tribar v_h\tribar, \for t>0.\qedhere
\]
\end{proof}
It follows from Proposition \ref{7.prop} and Lemma \ref{lumped-L2-norm-nonsmooth} that the 
highest order of
convergence that can hold,
 uniformly for all $v\in L_2$, 
 and for all piecewise symmetric families $\{\T_h\}$,  is
$O(h^{3/2})$, i.e.,  Theorem \ref{th4.3} is  best possible in this regard.

\begin{remark}
Since $M_h$ is proportional to the operator
$\De_h^*$ used in \cite{clt11}, the arguments in this section
also apply to the lumped mass method. In particular,
the analogue of Proposition \ref{7.prop2} then shows that the first order nonsmooth data
estimate for $t>0$ of \cite[Theorem 4.3]{clt11} is best possible for
general
triangulations $\{T_h\}$. Further, the $O(h^{3/2})$ estimate stated in Corollary \ref{4.1} is
best possible for piecewise almost symmetric triangulations. 
Our examples here may be thought of as generalizations to two space
dimensions of the one--dimensional counter--examples in \cite[Section
7]{clt11}.
\end{remark}


\section{Some fully discrete schemes}\label{fully}


In this  section we discuss briefly the generalization of our above 
results for the spatially semidiscrete finite volume method to
some basic fully discrete schemes, namely the
 backward Euler and Crank-Nicolson methods.

With $k>0$, $t_n=n\,k,\ n=0,1,\dots$, the backward Euler finite
volume method approximates $u(t_n)$ by
 $\tU^n\in S_h$ for $n\ge0$ such that,
with  $\bar\partial \tU^n=(\tU^n-\tU^{n-1})/k$,
\[
\lla\bar\partial \tU^n,\chi\rra+(\nabla
\tU^n,\nabla\chi)=0,\quad\forall\chi\in S_h, \for n\ge1,\with \tU^0=v_h,
\]
or, 
\begin{equation}\label{lmvvv}
\bar\partial \tU^n -\wtDe_h\tU^n=0, \for n\ge1,\with \tU^0=v_h.
\end{equation}


Introducing the discrete solution operator $\wtE_{kh}=(I-k\wtDe_h)^{-1}$ we may write
$\tU^n=\wtE_{kh}\tU^{n-1}=\wtE_{kh}^n\tU^0$, $n\ge1$. Using eigenfunction expansion and 
 Parseval's relation, we obtain,  analogously to \cite[Chapter 7]{Thomee06}, the stability property
\begin{equation}\label{Ekh-stability}
 \|\nabla^p\wtE_{kh}^n\chi\|\le C\|\nabla^p\chi\|,\quad\forall\chi\in S_h,\for p=0,1.
\end{equation} 

The estimates that follow and their proofs are analogous to those for the lumped mass
method derived in \cite{clt11}, since the operators $\wtE_h(t)$,
$\wtDe_h$ and $Q_h$, defined in Section \ref{sec:prelim}, have
properties analogous to those of the corresponding operators for the lumped mass 
method. For simplicity we will only sketch the proof of Theorem \ref{lmfully}.

We shall use the following abstract lemma shown in \cite{clt11}, in
the case $\H=S_h$, normed by $\tribar\cdot\tribar$, and  with
$\bA=-\wtDe_h$.
\begin{lemma}
\label{fully-BE} Let $\bA$ be a linear, selfadjoint, positive definite
operator in a Hilbert space $\H$, with compact inverse, let $\bfu=\bfu(t)$
be the solution of
\begin{equation*}
 \bfu'+\bA\bfu=0,\for t>0,\with \bfu(0)=\bfv,
\end{equation*}
and let $\bfU=\{\bfU^n\}_{n=0}^\infty$ be defined by
\begin{equation*}
\bar\partial \bfU^n+\bA\bfU^n=0,\for n\ge1,\with \bfU^0=\bfv.
\end{equation*}
Then, for $p=0,1$, $-1 \le q\le 3,$ with $p+q\ge0$, we have 
\begin{equation*}
\|\bA^{p/2}(\bfU^n-\bfu(t_n))\|\le
 Ckt_n^{-(1-q/2)}\|\bA^{(p+q)/2}\bfv\|,\for n\ge1.
\end{equation*}
\end{lemma}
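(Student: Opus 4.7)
The plan is spectral: since $\bA$ is selfadjoint and positive definite with compact inverse, there is a complete $\H$-orthonormal eigenbasis $\{\fy_j\}_{j\ge 1}$ with eigenvalues $0<\la_1\le\la_2\le\cdots\to\infty$. Writing $\bfv=\sum_j c_j\fy_j$ with $c_j=(\bfv,\fy_j)$, one has $\bfu(t_n)=\sum_j e^{-\la_j t_n} c_j\fy_j$ and, since $\bfU^n=(\Id+k\bA)^{-n}\bfv$, also $\bfU^n=\sum_j r(k\la_j)^n c_j\fy_j$ with $r(\mu):=(1+\mu)^{-1}$. Parseval's identity then gives
\begin{equation*}
\|\bA^{p/2}(\bfU^n-\bfu(t_n))\|^2=\sum_j\la_j^p\bigl[r(k\la_j)^n-e^{-nk\la_j}\bigr]^2 c_j^2,
\end{equation*}
so the entire problem reduces to controlling the scalar quantity $F_n(\mu):=r(\mu)^n-e^{-n\mu}$.

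More precisely, I would try to prove the scalar inequality
\begin{equation*}
|F_n(\mu)|\le C_q\,n^{-(1-q/2)}\mu^{q/2},\quad \mu>0,\ n\ge 1,\ -1\le q\le 3.
\end{equation*}
Granting this and substituting $\mu=k\la_j$, the identities $(k\la_j)^{q/2}=k^{q/2}\la_j^{q/2}$ and $n^{-(1-q/2)}=k^{1-q/2}t_n^{-(1-q/2)}$ bound each summand above by $C^2k^2t_n^{-2(1-q/2)}\la_j^{p+q}c_j^2$. Summation and the identification $\sum_j\la_j^{p+q}c_j^2=\|\bA^{(p+q)/2}\bfv\|^2$, which is meaningful precisely when $p+q\ge 0$, then yields the lemma. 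The extra factor $\la_j^p$ for $p=1$ is absorbed into $\la_j^{p+q}$ on the right, so no separate argument is needed for that case.

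For the scalar inequality I would split by the size of $\mu$. For $\mu\le 1$, the expansion $-\log(1+\mu)=-\mu+\tfrac12\mu^2-\tfrac13\mu^3+\cdots$ gives
\begin{equation*}
r(\mu)^n=e^{-n\mu}\exp\bigl(n\bigl(\tfrac12\mu^2-\tfrac13\mu^3+\cdots\bigr)\bigr),
\end{equation*}
from which an elementary estimate yields $|F_n(\mu)|\le C\,n\mu^2 e^{-cn\mu}$ for some fixed $c>0$. For $\mu\ge 1$ both $r(\mu)^n\le 2^{-n}$ and $e^{-n\mu}\le e^{-n}$ decay geometrically, so $|F_n(\mu)|\le 2^{-n+1}$, which is trivially dominated by $n^{-(1-q/2)}\mu^{q/2}$. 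Finally, writing
\begin{equation*}
n\mu^2 e^{-cn\mu}=n^{-(1-q/2)}\mu^{q/2}\cdot(n\mu)^{2-q/2}e^{-cn\mu}
\end{equation*}
and noting that $(n\mu)^{2-q/2}e^{-cn\mu}$ is bounded uniformly in $n\mu>0$ for every $q\in[-1,3]$ closes the argument. The only real obstacle is the bookkeeping with the exponents: one must check that the range $-1\le q\le 3$ with $p+q\ge 0$ is exactly the range in which the scalar decay of $F_n$ matches the power of $\la_j$ allowed on the right-hand side; the rest of the proof is routine.
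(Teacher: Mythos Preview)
The paper does not actually prove this lemma; it merely states it and cites \cite{clt11} for the proof. Your spectral approach is the standard one and is essentially what one finds in that reference and in \cite[Chapter~7]{Thomee06}: reduce via eigenfunction expansion to a scalar estimate for $F_n(\mu)=r(\mu)^n-e^{-n\mu}$, then verify the required decay in $n$ and $\mu$.

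There is, however, a genuine gap in your large-$\mu$ step. You assert that for $\mu\ge 1$ the bound $|F_n(\mu)|\le 2^{-n+1}$ is ``trivially dominated by $n^{-(1-q/2)}\mu^{q/2}$''. This is false when $q<0$: then $\mu^{q/2}\to 0$ as $\mu\to\infty$, so a bound that is uniform in $\mu$ cannot possibly control all $\mu\ge 1$. The case $q=-1$ (which you need, with $p=1$) therefore remains open in your argument as written. The fix is easy: for $\mu\ge 1$ extract the required negative power of $\mu$ directly from each term. For instance, $(1+\mu)^{-n}\le \mu^{-1/2}(1+\mu)^{-(n-1/2)}\le \mu^{-1/2}2^{-(n-1/2)}$ and $e^{-n\mu}\le C(n\mu)^{-1/2}e^{-n\mu/2}\le C\mu^{-1/2}n^{-1/2}e^{-n/2}$, after which the remaining geometric factors absorb the required $n^{-3/2}$. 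With this repair the proof is complete; your treatment of the small-$\mu$ range and the bookkeeping with exponents are correct.
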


The error estimates of the following theorem for \eqref{lmvvv}
 are of optimal order  under the same assumptions as in Section
\ref{sec:smooth}.

\begin{theorem}\label{lmfully} Let
 $u$  and $\tU$ be the solutions of \eqref{eq1}
and  \eqref{lmvvv}.
 Then, for $n\ge1$,
\begin{equation*}
\| \tU^n-u(t_n)\| \le 
\begin{cases}
C(h^2+k) |v|_2,&\ifff \ \|v_h-v\|\le Ch^2|v|_2,\\
C(h^2+k) t_n^{-1/2}|v|_1,&\ifff \ v_h=P_hv  \andy  \|\nabla P_hv\|\le C|v|_1,\\
C(h^2+k) t_n^{-1}\|v\|,&\ifff\  v_h=P_hv \andy \eqref{higher_error}\ holds.
\end{cases}
\end{equation*}
\end{theorem}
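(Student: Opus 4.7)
The plan is to combine the spatial error estimates of Section~\ref{sec:smooth} with a purely temporal estimate obtained from Lemma~\ref{fully-BE}, using the splitting
\[
\tU^n - u(t_n) = \bigl(\tU^n - \tu_h(t_n)\bigr) + \bigl(\tu_h(t_n) - u(t_n)\bigr),
\]
where $\tu_h$ is the spatially semidiscrete finite volume solution with the same initial data $v_h$. The second term is controlled directly by Theorems~\ref{lumped-L2-norm-smooth}, \ref{lumped-L2-norm-msmooth}, and \ref{lumped-nonsmooth-opt} under the respective hypotheses on $v$, $v_h$, and $\{\T_h\}$, producing the $Ch^2$, $Ch^2 t_n^{-1/2}$, and $Ch^2 t_n^{-1}$ contributions.

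For the temporal part I would apply Lemma~\ref{fully-BE} with $\H = S_h$ equipped with the inner product $\lla\cdot,\cdot\rra$ and the operator $\bA = -\wtDe_h$, which is selfadjoint positive definite on $\H$. Taking $p=0$ and choosing $q=2,1,0$ in the three cases gives
\[
\|\tU^n - \tu_h(t_n)\| \le C k\, t_n^{-(1-q/2)} \|(-\wtDe_h)^{q/2} v_h\|.
\]
It remains to identify the $(-\wtDe_h)^{q/2}$ norm of $v_h$ with a standard norm of $v$. For $q=1$, recall from the proof of Lemma~\ref{bEh-estimates} that $\|(-\wtDe_h)^{1/2} v_h\|_{\tribar\cdot\tribar} = \|\nabla v_h\|$, so the assumed $\dot H^1$ stability of $P_h$ yields $\|\nabla P_h v\| \le C|v|_1$. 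For $q=0$, we have $\|P_h v\| \le \|v\|$ trivially. For $q=2$ and the canonical choice $v_h = R_h v$, the identity $-\lla \wtDe_h R_h v, \chi\rra = (\nabla v, \nabla\chi) = -(\Delta v, \chi)$, together with Cauchy--Schwarz and the equivalence of $\tribar\cdot\tribar$ with $\|\cdot\|$, gives $\|\wtDe_h R_h v\| \le C|v|_2$.

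The main obstacle is the smooth data case, since the hypothesis there is the weaker approximation bound $\|v_h - v\| \le Ch^2|v|_2$ rather than $v_h = R_h v$, and we cannot a priori estimate $\|\wtDe_h v_h\|$ by $|v|_2$ for arbitrary such $v_h$. To handle this I would split $v_h = R_h v + w$ with $w = v_h - R_h v$ and use linearity, so that $\tU^n - \tu_h(t_n)$ decomposes into two parts with initial data $R_h v$ and $w$. The first is treated by Lemma~\ref{fully-BE} with $q=2$ as above. For the second, the triangle inequality $\|w\| \le \|v_h - v\| + \|v - R_h v\| \le Ch^2|v|_2$ combined with the stability bounds \eqref{Ekh-stability} and Lemma~\ref{bEh-estimates} gives $\|\tU^n_w - \tu_h^w(t_n)\| \le C\|w\| \le Ch^2|v|_2$ uniformly in $n$, which is stronger than the $k$-dependent bound that is needed.

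Finally, the mildly nonsmooth and nonsmooth cases require no such splitting because the hypotheses already fix $v_h = P_h v$, so the identification of $\|(-\wtDe_h)^{q/2} v_h\|$ in terms of $|v|_q$ is immediate from the observations above. Adding the spatial and temporal contributions in each case yields the three asserted bounds.
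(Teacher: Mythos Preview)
Your proposal is correct and follows essentially the same approach as the paper: the same splitting $\tU^n-u(t_n)=(\tU^n-\tu_h(t_n))+(\tu_h(t_n)-u(t_n))$, the same appeal to the semidiscrete Theorems~\ref{lumped-L2-norm-smooth}--\ref{lumped-nonsmooth-opt} for the second term, and the same use of Lemma~\ref{fully-BE} with $\bA=-\wtDe_h$ and $q=2,1,0$ for the first. Your explicit decomposition $v_h=R_hv+w$ in the smooth case is exactly what the paper summarizes in the phrase ``by the stability estimates \eqref{Ekh-stability} and Lemma~\ref{bEh-estimates}, we may assume $v_h=R_hv$''; one small point of care is that Lemma~\ref{fully-BE} delivers bounds in the $\tribar\cdot\tribar$-norm of $\H=(S_h,\lla\cdot,\cdot\rra)$, so the norm equivalence should be invoked on both sides, not only on the right.
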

\begin{proof} 
Analogously  to the proof of \cite[Theorem 8.1]{clt11}, 
we split the error as
\begin{equation*}
 \tU^n-u(t_n)=(\tU^n-\tu_h(t_n))+(\tu_h(t_n)-u(t_n))=\be_n+\eta_n.
\end{equation*}
By Theorems \ref{lumped-L2-norm-smooth}, \ref{lumped-L2-norm-msmooth}
and \ref{lumped-nonsmooth-opt}, $\eta_n$ is bounded as required. In
order to bound $\be_n=(\wtE_{kh}^n-\wtE_h(t_n))v_h$ in the smooth data
case, it suffices, using the stability estimates \eqref{Ekh-stability}
and Lemma \ref{bEh-estimates},  to consider $v_h=R_hv$. We  obtain by
Lemma \ref{fully-BE}, with $\bA=A_h=-\wtDe_h$, and $q=2,1,0$,
\begin{equation*}
\tribar\be_n\tribar=
\tribar \tU^n-\tu_h(t_n)\tribar
\le Ckt_n^{-(1-q/2)}\tribar A_h^{q/2} v_h\tribar \le
Ckt_n^{-(1-q/2)}|v|_q,
\end{equation*}
where for $q=2$, the last inequality follows from 
\[
\tribar A_h R_hv\tribar^2=(\nabla R_hv,\nabla A_h R_hv) 
=(\nabla v,\nabla A_hR_hv)=-(\De v,A_h R_hv),
\]
 for $q=1$ from $\tribar A_h^{1/2}P_hv\tribar=\|\nabla P_hv\|\le
C|v|_1$ and for $q=0$ from $\tribar P_hv \tribar\le C\|v\|$.
\end{proof}

Also for the lumped mass method the analogous result in the
mildly nonsmooth data case $v\in \Hdot^1$ holds,  and should replace
the result for $q=1$ in \cite[Theorem 8.1]{clt11}, cf. the remark
after Theorem \ref{lumped-L2-norm-msmooth}.
 
Recall that $Q_h$ satisfies \eqref{higher_error} if $\{\T_h\}$ is
symmetric. For almost symmetric or piecewise almost symmetric
$\{\T_h\}$ we obtain  correspondingly the following nonsmooth initial
data error estimates employing \eqref{lumped-nonsmooth-est-opt1} and 
\eqref{lumped-nonsmooth-est-opt2}.

\begin{theorem}\label{lmfully-almost}
 Let $u$ and $\tU$ be the solutions of \eqref{eq1} and  \eqref{lmvvv},
with $v_h=P_hv$. Then, for $n\ge1$,
\begin{equation*}
\| \tU^n-u(t_n)\| \le 
\begin{cases}
C(h^2\ell_h^{1/2}+k)t_n^{-1} \|v\|,&\ifff \{\T_h\} \text{ is almost symmetric} ,
\\
C(h^{3/2}+k) t_n^{-1}\|v\|,&\ifff \{\T_h\} \text{ is piecewise almost symmetric}.
\end{cases}
\end{equation*}
\end{theorem}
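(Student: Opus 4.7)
The plan is to follow the same splitting strategy used in the proof of Theorem \ref{lmfully}, writing
\[
\tU^n-u(t_n)=\bigl(\tU^n-\tu_h(t_n)\bigr)+\bigl(\tu_h(t_n)-u(t_n)\bigr)=\be_n+\eta_n,
\]
and bounding each piece separately. The spatial error $\eta_n$ is estimated directly by appealing to the semidiscrete results already proved in Section \ref{sec:specialmeshes}: in the almost symmetric case, \eqref{lumped-nonsmooth-est-opt1} gives $\|\eta_n\|\le Ch^2\ell_h^{1/2}t_n^{-1}\|v\|$, while in the piecewise almost symmetric case, \eqref{lumped-nonsmooth-est-opt2} yields $\|\eta_n\|\le Ch^{3/2}t_n^{-1}\|v\|$.

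For the temporal error $\be_n=(\wtE_{kh}^n-\wtE_h(t_n))v_h$ with $v_h=P_hv$, the idea is to invoke Lemma \ref{fully-BE} in the Hilbert space $\H=S_h$ with inner product $\lla\cdot,\cdot\rra$ and operator $\bA=-\wtDe_h$, which is selfadjoint and positive definite with respect to $\lla\cdot,\cdot\rra$. Taking the parameters $p=0$ and $q=0$ in the lemma, I obtain
\[
\tribar\be_n\tribar\le Ckt_n^{-1}\tribar P_hv\tribar\le Ckt_n^{-1}\|v\|,
\]
where the last inequality uses the equivalence of $\tribar\cdot\tribar$ and $\|\cdot\|$ on $S_h$ together with the $L_2$-stability $\|P_hv\|\le\|v\|$. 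Converting back via norm equivalence gives $\|\be_n\|\le Ckt_n^{-1}\|v\|$.

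Combining the bounds on $\be_n$ and $\eta_n$ by the triangle inequality produces the two claimed estimates. No essential new difficulty arises here: all the heavy lifting---establishing the mesh-dependent bounds on $Q_h$ and the resulting semidiscrete nonsmooth data estimates---was already performed in Theorems \ref{verify1} and \ref{th4.3}, while the temporal part is a direct application of the abstract backward Euler lemma. The only point deserving mild care is verifying that the choice $v_h=P_hv$ is compatible with both ingredients simultaneously, but this is automatic since Theorems \ref{verify1} and \ref{th4.3} are stated precisely for $v_h=P_hv$, and the bound $\tribar P_hv\tribar\le C\|v\|$ is exactly what the $q=0$ case of Lemma \ref{fully-BE} requires.
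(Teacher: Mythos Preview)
Your proof is correct and follows precisely the approach the paper indicates: the paper does not give an explicit proof of Theorem \ref{lmfully-almost}, but the sentence preceding it says the estimates are obtained by employing \eqref{lumped-nonsmooth-est-opt1} and \eqref{lumped-nonsmooth-est-opt2}, i.e., by the same splitting $\tU^n-u(t_n)=\be_n+\eta_n$ used in the proof of Theorem \ref{lmfully}, with the spatial part bounded via the semidiscrete nonsmooth data estimates of Theorems \ref{verify1} and \ref{th4.3}, and the temporal part via Lemma \ref{fully-BE} with $q=0$.
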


For the gradient of the error we may prove as in \cite[Theorem
8.2]{clt11},  the following smooth and
nonsmooth data error estimates, without additional assumptions on
 $\T_h$.  For smooth initial data we assumed in \cite{clt11} that
$v_h=R_hv$, but  the more general choices of $v_h$ are permitted
by the stability estimates \eqref{Ekh-stability} and Lemma
\ref{bEh-estimates}.

\begin{theorem}\label{lmfully-H1} Let
 $u$ and $\tU$ be the solutions of \eqref{eq1} and  \eqref{lmvvv}.
 Then, for $n\ge1$,
\begin{equation*}\label{lmfully-est2}
\|\nabla( \tU^n-u(t_n))\| \le
\begin{cases}
C(h+k)|v|_3,&\ifff \ \|\nabla(v_h-v)\|\le Ch|v|_2,
\\
C(h\,t_n^{-1}+k\,t_n^{-3/2})\|v\|,&\ifff \ v_h=P_hv.
\end{cases}
\end{equation*}
\end{theorem}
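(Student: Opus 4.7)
The plan is to mirror the approach of Theorem \ref{lmfully} and \cite[Theorem 8.2]{clt11}, splitting the error as
\[
\tU^n - u(t_n) = \bigl(\tU^n - \tu_h(t_n)\bigr) + \bigl(\tu_h(t_n) - u(t_n)\bigr) = \be_n + \eta_n,
\]
so that the time-discretization error $\be_n = (\wtE_{kh}^n - \wtE_h(t_n))v_h$ and the spatial semidiscrete error $\eta_n$ can be bounded separately. For $\eta_n$, Theorem \ref{t4.3} directly supplies the required gradient bounds: $\|\nabla \eta_n\| \le Ch|v|_2 \le Ch|v|_3$ in the smooth case, and $\|\nabla \eta_n\| \le Cht_n^{-1}\|v\|$ in the nonsmooth case with $v_h = P_h v$.

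For $\be_n$ I would apply Lemma \ref{fully-BE} with $\bA = A_h := -\wtDe_h$ and $\H = S_h$ endowed with $\lla\cdot,\cdot\rra$. The identity $\tribar A_h^{1/2}\chi\tribar^2 = \lla A_h\chi, \chi\rra = \|\nabla\chi\|^2$, combined with the norm equivalence $\tribar\cdot\tribar \sim \|\cdot\|$, converts the output of the lemma into gradient bounds on $\be_n$. In the nonsmooth case $v_h = P_hv$, choosing $p = 1$ and $q = -1$ yields
\[
\|\nabla \be_n\| \;\le\; C\,\tribar A_h^{1/2}\be_n\tribar \;\le\; Ckt_n^{-3/2}\tribar v_h\tribar \;\le\; Ckt_n^{-3/2}\|v\|,
\]
by the $L_2$-stability of $P_h$, giving the stated nonsmooth data estimate once added to the bound on $\eta_n$.

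In the smooth case, I first split $v_h = R_h v + (v_h - R_h v)$. The contribution of the second piece to $\|\nabla \be_n\|$ is bounded by $C\|\nabla(v_h - R_h v)\| \le Ch|v|_2 \le Ch|v|_3$ using the stability estimates \eqref{Ekh-stability} and Lemma \ref{bEh-estimates} together with \eqref{rh-bound}. For the Ritz part, Lemma \ref{fully-BE} with $p = 1$, $q = 2$ yields
\[
\|\nabla \be_n\| \;\le\; Ck\,\tribar A_h^{3/2} R_h v\tribar \;=\; Ck\,\|\nabla A_h R_h v\|,
\]
where I used $\tribar A_h^{3/2} R_hv\tribar^2 = \lla A_h(A_hR_hv), A_hR_hv\rra = \|\nabla A_hR_hv\|^2$. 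The defining relation $\lla A_h R_h v, \chi\rra = (\nabla R_h v, \nabla \chi) = -(\De v, \chi)$ for all $\chi \in S_h$ identifies $A_h R_h v$ with the $\lla\cdot,\cdot\rra$-orthogonal projection of $-\De v \in \Hdot^1$ onto $S_h$.

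The main obstacle is the last bound, $\|\nabla A_h R_h v\| \le C|v|_3$. My plan is to compare $A_h R_h v$ with the standard Ritz projection $R_h(-\De v)$, which satisfies $\|\nabla R_h(-\De v)\| \le C|\mathord{-}\De v|_1 = C|v|_3$ by \eqref{rh-bound}. The difference lies in $S_h$ and arises from the perturbation of $(\cdot,\cdot)$ by $\lla\cdot,\cdot\rra$ via the quadrature functional $\vep_h$, controllable through Lemma \ref{quad-error-lemma}; a subsequent inverse inequality (under quasiuniformity of $\{\T_h\}$, if needed) would then upgrade the resulting $L_2$ estimate on $A_h R_h v - R_h(-\De v)$ to the desired $H^1$ estimate, closing the argument as in \cite[Theorem 8.2]{clt11}.
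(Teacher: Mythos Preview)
Your proposal is correct and follows essentially the same approach as the paper: the paper explicitly defers to \cite[Theorem~8.2]{clt11}, uses the splitting $\tU^n-u(t_n)=\be_n+\eta_n$ (handling $\eta_n$ via Theorem~\ref{t4.3} and $\be_n$ via Lemma~\ref{fully-BE}), and remarks that the reduction from general $v_h$ to $v_h=R_hv$ in the smooth case goes through the stability estimates \eqref{Ekh-stability} and Lemma~\ref{bEh-estimates}---precisely the steps you outline. Your identification of the remaining technical point, bounding $\|\nabla A_hR_hv\|$ by $C|v|_3$ (via comparison with $R_h(-\Delta v)$ and an inverse inequality), is the natural way to close the argument; the paper's phrase ``without additional assumptions on $\T_h$'' refers to not needing the symmetry condition \eqref{higher_error}, not to dispensing with quasiuniformity.
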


We now turn to the Crank--Nicolson  method,
 defined by
\begin{equation}\label{cnvvv}
\bar\partial \tU^n-\wtDe_h \tU^{n-\tfrac12}=0, \for n\ge1, 
\with U^0=v_h, \ \  \tU^{n-\tfrac12}=\tfrac12(\tU^n+\tU^{n-1}).
\end{equation}
Denoting again the discrete solution operator by 
$\wtE_{kh}=(I+\frac 12k\wtDe_h)(I-\frac 12k\wtDe_h)^{-1}$ 
we may write
$\tU^n=\wtE_{kh}\tU^{n-1}=\wtE_{kh}^n\tU^0$, $n\ge1$. Using eigenfunction expansion and 
 Parseval's relation,  we find   
that \eqref{Ekh-stability}  also holds for this 
method.

The Crank--Nicolson method does not have  as advantageous smoothing
properties as the backward Euler method, which is reflected in the
fact
that the following analogue of Lemma \ref{fully-BE}, shown in
\cite[Lemma 8.2]{clt11}, does not allow $q=0$.
\begin{lemma}\label{fully-CN}
Let  $\bA$ and $\bfu(t)$ be as in Lemma \ref{fully-BE} and let
$\bfU^n$
satisfy
\[
\bar\partial \bfU^n+\bA \bfU^{n-\tfrac12}=0,\for n\ge1,\with
\bfU^0=\bfv.
\]
Then
\begin{equation*}\label{fully-est2}
\|\bA^{p/2}(\bfU^n-\bfu(t_n))\|\le 
Ck^2t_n^{-(2-q)}\|\bA^{p/2+q}\bfv\|, \for
n\ge1,\ p=0,1,\ q=1,2.
\end{equation*}
\end{lemma}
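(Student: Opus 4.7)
The plan is to reduce the operator estimate to a scalar inequality for the rational function associated with Crank--Nicolson, and then to verify the scalar inequality by a case analysis that distinguishes the dissipative regime from the non-smoothing regime.

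First, I would diagonalize. Since $\bA$ is self-adjoint positive definite with compact inverse on the Hilbert space $\H$, there is an orthonormal basis $\{\phi_j\}$ of eigenvectors with positive eigenvalues $\{\lambda_j\}$. The exact solution is $\bfu(t_n)=\sum_j e^{-t_n\lambda_j}(\bfv,\phi_j)\phi_j$, and the Crank--Nicolson iteration, rewritten as $\bfU^n=r(k\bA)^n\bfv$ with $r(\mu)=(1-\mu/2)/(1+\mu/2)$ the $(1,1)$-Pad\'e approximant of $e^{-\mu}$, gives $\bfU^n=\sum_j r(k\lambda_j)^n(\bfv,\phi_j)\phi_j$. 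By Parseval,
\[
\|\bA^{p/2}(\bfU^n-\bfu(t_n))\|^2=\sum_j \lambda_j^p\,\bigl|r(k\lambda_j)^n-e^{-t_n\lambda_j}\bigr|^2\,(\bfv,\phi_j)^2,
\]
so the desired estimate follows once I establish the pointwise bound
\[
|r(k\lambda)^n-e^{-nk\lambda}|\le C\,k^2\,t_n^{-(2-q)}\lambda^q,\qquad\forall\,\lambda>0.
\]
Setting $\mu=k\lambda$ and using $t_n=nk$, this reduces to the purely scalar claim
\[
|r(\mu)^n-e^{-n\mu}|\le C\,\mu^q\,n^{q-2},\qquad\forall\,\mu\ge0,\ n\ge1,\ q=1,2.
\]

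Next, I would verify this scalar inequality using two standard properties of $r$: the local accuracy $|r(\mu)-e^{-\mu}|\le C\mu^3$ for $\mu$ bounded (from $r(\mu)-e^{-\mu}=-\mu^3/12+O(\mu^4)$), and the $A$-stability estimate $|r(\mu)|\le 1$ for $\mu\ge0$, strengthened to $|r(\mu)|\le e^{-c\mu}$ on $[0,1]$ and to $|r(\mu)|\le 1-c/(1+\mu)$ for $\mu\ge1$ (with a fixed $c>0$). Combined with the telescoping identity
\[
r(\mu)^n-e^{-n\mu}=\bigl(r(\mu)-e^{-\mu}\bigr)\sum_{j=0}^{n-1}r(\mu)^{n-1-j}e^{-j\mu},
\]
I would treat three regimes. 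For $\mu\le1$, the two properties above give $|r(\mu)^n-e^{-n\mu}|\le C\mu^3 n\,e^{-c\mu(n-1)}$, and standard boundedness of $y^a e^{-cy}$ applied with $y=n\mu$ yields both bounds $C\mu^2$ (for $q=2$) and $C\mu/n$ (for $q=1$). For $1\le\mu\le n$, the bound $|r(\mu)|^n\le e^{-cn/(1+\mu)}$ together with $e^{-n\mu}\le e^{-n}$ and the elementary inequality $s\,e^{-cs/2}\le C$ (applied with $s=n/\mu$ for one term and $s=n\mu$ for the other) give $|r(\mu)^n-e^{-n\mu}|\le C\mu/n$. For $\mu\ge n$, the trivial bound $|r(\mu)^n-e^{-n\mu}|\le 2$ suffices because $\mu^q n^{q-2}\ge 1$ in both cases $q=1,2$.

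The main obstacle is the high-frequency regime $\mu\to\infty$: since $r(\mu)\to -1$, Crank--Nicolson does not damp high-frequency modes the way $e^{-\mu}$ does, and neither $|r(\mu)^n|$ nor $e^{-n\mu}$ can individually be made small uniformly in $\mu$ and $n$. This nondissipative behaviour is exactly what prevents extending the lemma to $q=0$; the factor $\lambda^q$ on the data side is precisely what is needed to absorb the missing high-frequency smoothing, and the case split above is designed around the natural threshold $\mu\sim n$ where the trivial bound $\le 2$ takes over from the exponential-decay bound based on $|r(\mu)|\le 1-c/(1+\mu)$.
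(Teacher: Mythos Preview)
Your proof is correct. The paper does not give its own proof of this lemma but defers to \cite[Lemma~8.2]{clt11}; your spectral reduction to the scalar bound $|r(\mu)^n-e^{-n\mu}|\le C\mu^q n^{q-2}$ and its three-regime verification (local $O(\mu^3)$ accuracy plus telescoping for $\mu\le1$, the contractivity $|r(\mu)|\le 1-c/(1+\mu)$ for $1\le\mu\le n$, and the trivial bound for $\mu\ge n$) is precisely the standard argument for such rational-approximation estimates, as in \cite[Chapter~7]{Thomee06}.
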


This time optimal order estimates for the error in $L_2$ and in $H^1$, hold uniformly
down to $t=0$, if $v\in \dot H^4$ and $v\in \dot H^5$, respectively.
The proofs  are analogous to those of  \cite[Theorems 8.3 and
8.4]{clt11},  where we assumed $v_h=R_hv$. Again the 
stability estimates \eqref{Ekh-stability} and Lemma
\ref{bEh-estimates} permit  the more general choices for $v_h$.
\begin{theorem}\label{lmfully-2}
Let $u$ and $\tU$   be the solutions of \eqref{eq1}
and \eqref{cnvvv}.  Then, with $q=1,2$, we have, for $n\ge1$,
\begin{align*}\label{lmfully-est3}
\| \tU^n-u(t_n)\| &\le C(h^2+k^2t_n^{-(2-q)})|v|_{2q},\ifff \ \|v_h-v\|\le Ch^2|v|_2\\
\| \nabla(\tU^n-u(t_n))\| &\le C(h+k^2t_n^{-(2-q)})|v|_{2q+1},
\ifff \ \|\nabla(v_h-v)\|\le Ch|v|_2.
\end{align*}
\end{theorem}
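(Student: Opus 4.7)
My plan is to imitate the backward Euler argument in Theorem \ref{lmfully} and the treatment of \cite[Theorems 8.3 and 8.4]{clt11}, isolating what must be changed because we are working with the Crank--Nicolson solution operator $\wtE_{kh}=(I+\tfrac12 k\wtDe_h)(I-\tfrac12 k\wtDe_h)^{-1}$ and with the more general initial data $v_h$.

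First I would split the error in the usual way,
\[
 \tU^n-u(t_n)=(\tU^n-\tu_h(t_n))+(\tu_h(t_n)-u(t_n))=\be_n+\eta_n,
\]
and dispose of $\eta_n$ by invoking the semidiscrete results already established: Theorem \ref{lumped-L2-norm-smooth} gives $\|\eta_n\|\le Ch^2|v|_2$ whenever $\|v_h-v\|\le Ch^2|v|_2$, and Theorem \ref{t4.3} gives $\|\nabla\eta_n\|\le Ch|v|_2$ whenever $\|\nabla(v_h-v)\|\le Ch|v|_2$. These bounds hold uniformly down to $t=0$ and require no additional assumption on $\{\T_h\}$.

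For the temporal part $\be_n=(\wtE_{kh}^n-\wtE_h(t_n))v_h$ I would first use the stability \eqref{Ekh-stability} together with the smoothing estimates in Lemma \ref{bEh-estimates} to reduce to the case $v_h=R_hv$. Indeed, writing $v_h=R_hv+(v_h-R_hv)$, both $\wtE_{kh}^n$ and $\wtE_h(t_n)$ are stable in the relevant norm, and $\|v_h-R_hv\|\le Ch^2|v|_2$ (resp.\ $\|\nabla(v_h-R_hv)\|\le Ch|v|_2$) by the hypothesis on $v_h$ combined with \eqref{rh-bound}, so the $v_h-R_hv$ contribution is absorbed into the $h^2|v|_2$ (resp.\ $h|v|_2$) term. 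With $v_h=R_hv$, I apply Lemma \ref{fully-CN} in the Hilbert space $(S_h,\lla\cdot,\cdot\rra)$ with $\bA=-\wtDe_h$ to obtain
\[
 \tribar \bA^{p/2}\be_n\tribar\le Ck^2 t_n^{-(2-q)}\tribar\bA^{p/2+q}R_hv\tribar,\quad p=0,1,\ q=1,2,
\]
and then use the norm equivalence between $\tribar\cdot\tribar$ and $\|\cdot\|$.

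The main work, and likely the main obstacle, is then to show
\begin{equation}\label{eq:keypower}
 \tribar\bA^{p/2+q}R_hv\tribar\le C|v|_{2q+p},\qquad p=0,1,\ q=1,2.
\end{equation}
For the lowest case $(p,q)=(0,1)$ this is exactly the computation already performed in the proof of Theorem \ref{lmfully}: $\tribar\bA R_hv\tribar^2=(\nabla R_hv,\nabla \bA R_hv)=(\nabla v,\nabla \bA R_hv)=-(\De v,\bA R_hv)$ by the Ritz identity and Green's formula, and the norm equivalence then gives $\tribar\bA R_hv\tribar\le C|v|_2$. For $(p,q)=(1,1)$ I would use $\|\nabla\chi\|=\tribar\bA^{1/2}\chi\tribar$, integrate by parts one more time against $v\in\dot H^3$, and again invoke norm equivalence. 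The harder cases $q=2$ are handled inductively: writing $\bA R_hv=\wt P_h(-\De v)$, where $\wt P_h$ is the projection onto $S_h$ with respect to $\lla\cdot,\cdot\rra$ (well defined on $L_2$ by the Riesz representation theorem), one reduces $\tribar\bA^{2}R_hv\tribar$ and $\tribar\bA^{5/2}R_hv\tribar$ to bounds for $\bA R_h$ applied to $-\De v\in \dot H^{2q-2}$, using the $L_2$-- and $H^1$--stability of $\wt P_h$ (the latter requiring the quasiuniformity of $\{\T_h\}$, which is implicit in the smooth-data setting, or can be obtained via an inverse inequality together with \eqref{rh-bound}). The main obstacle is precisely this bookkeeping, since $\wtDe_h$ does not commute with $R_h$ under the $\lla\cdot,\cdot\rra$ inner product as cleanly as $\De_h R_h=P_h\De$ does for the standard Galerkin method; one has to absorb the resulting quadrature error using Lemma \ref{q_h-stability} and the regularity $|\De v|_{H^{2q-2}}\le|v|_{2q}$.

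Combining \eqref{eq:keypower} with Lemma \ref{fully-CN} yields $\|\be_n\|\le Ck^2 t_n^{-(2-q)}|v|_{2q}$ and $\|\nabla\be_n\|\le Ck^2 t_n^{-(2-q)}|v|_{2q+1}$. Adding the bound on $\eta_n$ completes the proof.
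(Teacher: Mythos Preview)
Your proposal is correct and follows essentially the same route as the paper, which simply records that the argument is analogous to \cite[Theorems 8.3 and 8.4]{clt11} with $v_h=R_hv$, and that the stability estimate \eqref{Ekh-stability} together with Lemma \ref{bEh-estimates} permits the more general choices of $v_h$. Your splitting $\tU^n-u(t_n)=\be_n+\eta_n$, the use of Theorems \ref{lumped-L2-norm-smooth} and \ref{t4.3} for $\eta_n$, the stability-based reduction of $\be_n$ to the case $v_h=R_hv$, and the application of Lemma \ref{fully-CN} are exactly the intended steps.

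The only place where you go beyond the paper is in spelling out a plan for the bounds $\tribar\bA^{p/2+q}R_hv\tribar\le C|v|_{2q+p}$; the paper leaves these to \cite{clt11}. Your inductive device via $\bA R_hv=\wt P_h(-\De v)$ is a workable way to obtain them, and you are right that the $q=2$ cases (and the gradient case $p=1$) bring in $H^1$--stability of a projection and hence an implicit quasiuniformity assumption, which is indeed used in the lumped mass analysis you are mirroring.
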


For optimal order convergence for initial data only
in $L_2$, one may modify the Crank--Nicolson scheme by taking the
first two  steps by the backward Euler method, which has a smoothing
effect. We may show then the following result,  analogously to that of
\cite[Theorem 8.5]{clt11}, with the obvious modifications for almost
symmetric and piecewise almost symmetric families $\{\T_h\}$. 

\begin{theorem}\label{lmfully-3} Let $u$  be the solution of
\eqref{eq1} and  $\tU^n$ that of
 \eqref{lmvvv}, for $n=1,2$, and of \eqref{cnvvv}, for
$n\ge3$, with $v_h=P_hv$ and assume \eqref{higher_error} holds. Then
we have
\begin{equation*}
\| \tU^n-u(t_n)\| \le
C(h^2t_n^{-1}+k^2t_n^{-2})\|v\|, \for n\ge1.
\end{equation*}
\end{theorem}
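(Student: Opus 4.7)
The plan is to adapt the proof of \cite[Theorem 8.5]{clt11} to the finite volume setting. The operators $\wtE_h(t)$, $\wtDe_h$, $\wtE_{kh}$ and $Q_h$ introduced earlier satisfy the same stability, smoothing and spectral properties as their lumped mass counterparts, so that argument transfers directly. I begin with the usual splitting
\[
\tU^n - u(t_n) = \bigl(\tU^n - \tu_h(t_n)\bigr) + \bigl(\tu_h(t_n) - u(t_n)\bigr) = \be_n + \eta_n.
\]
The spatial part $\eta_n$ is controlled by Theorem \ref{lumped-nonsmooth-opt}, which under the assumption \eqref{higher_error} yields $\|\eta_n\| \le Ch^2 t_n^{-1}\|v\|$. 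It remains to show the temporal estimate $\|\be_n\| \le Ck^2 t_n^{-2}\|v\|$.

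Writing $A_h = -\wtDe_h$, the discrete solution is $\tU^n = r(kA_h) v_h$ where $r(z) = r_C(z)^{n-2}\,r_B(z)^{2}$, with $r_C(z) = (1-z/2)/(1+z/2)$ and $r_B(z) = (1+z)^{-1}$, while $\tu_h(t_n) = e^{-nkA_h} v_h$. Expanding $v_h$ in the $\lla\cdot,\cdot\rra$--orthonormal eigenbasis $\{\tpj\}$ of $A_h$ with eigenvalues $\tlj$, Parseval's identity together with the norm equivalence $\tribar\cdot\tribar \sim \|\cdot\|$ on $S_h$ and the contractivity $\|P_h v\|\le\|v\|$ reduce the desired bound $\|\be_n\|\le Ck^2 t_n^{-2}\|v\| = (C/n^2)\|v\|$ to the uniform spectral estimate
\[
|r(z) - e^{-nz}| \le C/n^{2}, \quad\text{for all }z\ge 0,\ n\ge 3.
\]

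The main obstacle is this spectral estimate, which is the content of the abstract lemma underlying \cite[Theorem 8.5]{clt11}. One splits into regimes. For small $z \lesssim 1/n$, Taylor expansion using the classical approximation identities $r_C(z) = e^{-z}+O(z^3)$ and $r_B(z)^2 = e^{-2z}+O(z^2)$ gives $|r(z) - e^{-nz}| \le C(nz^3+z^2)e^{-cnz}$, whose maximum over $z\ge 0$ is $O(1/n^2)$. Here the two initial BE factors $r_B(z)^2$ are essential: they supply the $O(z^2)$ truncation error that, combined with the $O(nz^3)$ from the CN iterations, yields the optimal $1/n^2$ rate. For $z$ of order one or larger, the BE damping gives $r_B(z)^2 \le C(1+z)^{-2}$, while the CN factor is contractive with $|r_C(z)|^{n-2} \le \exp(-c(n-2)/\max(z,1))$; their product is $O(1/n^2)$ at its maximum over $z$, and the exponential $e^{-nz}$ is negligible once $nz \gtrsim 1$. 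Combining the regimes yields the spectral estimate, hence $\|\be_n\| \le Ck^2 t_n^{-2}\|v\|$, and adding the spatial estimate completes the proof.
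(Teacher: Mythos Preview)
Your approach is correct and matches the paper's, which gives no proof of its own but refers to \cite[Theorem~8.5]{clt11}; you reproduce exactly that argument via the splitting $\be_n+\eta_n$, Theorem~\ref{lumped-nonsmooth-opt} for $\eta_n$, and the spectral estimate for $r(z)=r_C(z)^{n-2}r_B(z)^2$ for $\be_n$. One minor point of exposition: your two regimes $z\lesssim 1/n$ and ``$z$ of order one or larger'' leave the range $1/n\ll z\lesssim 1$ uncovered, but in fact the Taylor-type bound $|r(z)-e^{-nz}|\le C(nz^3+z^2)e^{-cnz}$ is valid for all $z$ in a fixed bounded interval (one uses that $r_C(z)\le e^{-z}$ on $[0,2)$, so the exponent $(n-2)g(z)+h(z)$ has nonpositive part of size $O(nz^3)$ and positive part bounded by $h(z)=O(z^2)$), so simply extend your small-$z$ regime to $z\le C_0$ and the argument closes.
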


\section{Problems with More General Elliptic Operators}\label{sec:general}

This final section is devoted to the extension of our earlier results
to the more general problem \eqref{1.eq1-general}, and we recall that we shall
consider the finite volume method \eqref{1.fv-general-22} where the bilinear
form $\wt a_h(\cdot,\cdot)$ is defined by \eqref{1.a-h-modified}. Our
error analysis is again based on estimates for the standard Galerkin finite element
method, in this case defined by \eqref{1.fem-general} and \eqref{a:form}. It is well known
that for this method the stability and smoothing estimates \eqref{fem-reg}
hold as do the error estimates  \eqref{1.sm}--\eqref{1.hsm}, where
the norms $|\cdot|_q$ are defined analogously to the norms  
\eqref{norms}, using the eigenvalues and eigenfunctions of $A$.

We introduce the discrete elliptic operator $\Ah: S_h \to S_h$ by
\begin{equation}\label{A_h-def}
\lla \Ah \vv, \chi \rra = \tah(\vv,\I_h\chi), \quad \forall \chi,\vv \in S_h,
\end{equation}
which is symmetric and  positive definite with respect to the inner
product $\lla \cdot, \cdot \rra$ by \eqref{a-symmetry}, since
$(\qbar\psi,\I_h\chi)$ is symmetric, positive semidefinite on $S_h$.
This follows from the fact that $\int_\K\chi\I_h\psi\,dx$ is symmetric
by \eqref{4.5} and $\qbar$ is constant and nonnegative in each $\K$ of
$\T_h$. 
We may then rewrite \eqref{1.fv-general-22} as
\begin{equation}\label{A_h-problem}
 \tu_{h,t}+\Ah\tu_h=0, \for t\ge0, \with  \tu_h(0)=v_h,
\end{equation}
and the solution  is given by $\tu_h(t)=\wtE_h(t)v_h$, where 
$\wtE_h(t)=e^{-\Ah \,t}$ is defined as in \eqref{bEh}, with $\{\tlj\}$
and $\{\tpj\}$ the eigenvalues and  eigenfunctions of
$\Ah$, orthonormal with respect to $\langle \cdot, \cdot \rangle$.

Note that a slightly different finite volume element method for
\eqref{1.eq1-general} has been considered in \cite{msz}. This method
differs in the discretization of the lower order term, using the
 bilinear $\bar a_h(\cdot,\cdot)$  defined by
\[
 \bar
a_h(\psi,\I_h\chi)=(\alb\nabla\psi,\nabla\chi)+(\beta\I_h\psi,
\I_h\chi),\quad\forall\psi,\chi\in S_h.
\]
For this method analogous results to Theorems
\ref{gen-smooth}--\ref{gen-sym} hold.

Following our error analysis in the previous sections we
introduce $\de=\tu_h - u_h$ and split the error into
$\tu_h -  u= \de + (u_h - u)$, where 
 $ u_h - u$ and $\nabla (u_h - u)$ are estimated by the analogues of 
\eqref{1.sm}--\eqref{1.hsm}.
 It  therefore suffices to derive estimates for $\de$,
which satisfies, for $t\ge0$, 
\begin{equation}\label{8-error}
\lla \de_{h,t}, \chi \rra + \tah(\de, \I_h\chi)=-\vep_h(u_{h,t}, \chi)
 - \tveph(u_h,\chi),\quad \forall \chi\in S_h, \with \de(0)=0,
\end{equation}
where $\vep_h(\cdot,\cdot)$ is given by \eqref{veph-def} and 
$\tveph(\cdot,\cdot)$ is defined by
\begin{equation}\label{tveph-def}
\tveph(\vv,\chi)= \tah(\vv, \I_h\chi) - a(\vv,\chi), \quad\forall \vv, \chi \in S_h.
\end{equation}
Now let  $Q_h: S_h \to S_h$ and $\Qal: S_h \to S_h$ be the
quadrature error operators given  by
\begin{equation}
 \label{new-Qh}
\tah( Q_h \vv, \I_h\chi) =  \vep_h(\vv, \chi)
\ \andy\
\tah ( \Qal \vv,  \I_h\chi)  =  \tveph(\vv,
\chi),\quad\forall\vv,\chi\in S_h.
\end{equation}
Using \eqref{A_h-def}, the equation
\eqref{8-error} for $\de$ can then be written in operator form as
\begin{equation*}
\de_t + \Ah \de =  -\Ah Q_h u_{h,t} - \Ah \Qal u_h ,
 \for t\ge0, \with \de(0)=0.
\end{equation*}
This problem is similar to \eqref{delta-eq}, except that the
operator $-\wtDe_h$ is replaced by $\Ah$ and that on the right hand
side we have an additional term resulting from the approximation of
the bilinear form $a(\cdot, \cdot)$. By Duhamel's principle we 
have 
\begin{equation}\label{gen-Duhamel}
\begin{split}
\de(t) = & -\int_0^t \wtE_h(t-s) \Ah Q_h u_{h,t}(s)\, ds \\
        &  \qquad \quad - \int_0^t \wtE_h(t-s) \Ah \Qal u_{h}(s)\, ds  
=:\detil(t) + \dehat(t),\for t\ge0.
\end{split}
\end{equation}

To estimate $\de$ it therefore suffices to bound $ \detil $ and $
\dehat$. For this we need some auxiliary results, which are discussed
below.

\begin{lemma}\label{epal} Let $\all, \qc\in \C^2$. For the
error functional $\tveph$, defined by \eqref{tveph-def},  we have
\[
|\tveph(\vv,\chi)|\le Ch^{p+q}\|\nabla^q\vv\|\,\|\nabla^p\chi\|,\quad\forall\vv,
\chi\in S_h, \with p,q=0,1.
\]
\end{lemma}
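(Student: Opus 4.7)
The plan is to exploit the fact that, by construction, $\alb$ and $\qbar$ are piecewise constant on $\T_h$ (equal on each $\tau$ to the value at $z_\tau$), so that the identity \eqref{a-symmetry} applies to $\tah$. This yields
\[
\tah(\vv, \I_h\chi) = (\alb\nabla\vv,\nabla\chi) + (\qbar \vv, \I_h\chi),
\]
and hence a clean three-term decomposition
\[
\tveph(\vv,\chi) = ((\alb-\all)\nabla\vv,\nabla\chi) + (\qbar \vv, \I_h\chi-\chi) + ((\qbar-\qc)\vv,\chi) =: T_1+T_2+T_3.
\]
I would then bound each $T_i$ separately and combine.

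The two ``coefficient error'' terms $T_1$ and $T_3$ benefit from the barycenter cancellation $\int_\tau (x-z_\tau)\,dx=0$. Since $\all\in \C^2$, Taylor expansion of $\all$ at $z_\tau$ gives $\int_\tau (\alb-\all)\,dx = O(h^2)|\tau|$, and because $\nabla\vv\cdot\nabla\chi$ is constant on each $\tau$,
\[
|T_1| \le Ch^2 \sum_{\tau\in\T_h} |\tau|\,|\nabla\vv|_\tau\,|\nabla\chi|_\tau \le Ch^2 \|\nabla\vv\|\,\|\nabla\chi\|.
\]
For $T_3$, expanding $\vv$ and $\chi$ around $z_\tau$ as $\vv(x)=\vv(z_\tau)+\nabla\vv\cdot(x-z_\tau)$ (and similarly for $\chi$) and applying the same integral cancellation to each monomial in $(x-z_\tau)$ appearing in $(\qbar-\qc)\vv\chi$ yields $|T_3| \le Ch^2 \|\vv\|\,\|\chi\|$, which is stronger than what is needed.

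For the cross term $T_2$ I would use the property $\int_\tau (\I_h\chi-\chi)\,dx=0$ that underlies Lemma \ref{quad-error-lemma}, together with the fact that $\qbar\vv$ is \emph{linear} on each $\tau$ since $\qbar$ is constant there. Subtracting the mean $\overline{(\qbar\vv)}_\tau=\qbar(\tau)\bar\vv_\tau$ gives, element-by-element,
\[
|T_2| \le C\sum_\tau \|\vv-\bar\vv_\tau\|_\tau \|\I_h\chi-\chi\|_\tau \le Ch^2\|\nabla\vv\|\,\|\nabla\chi\|,
\]
while the companion direct estimate $|T_2|\le \|\qbar\vv\|\,\|\I_h\chi-\chi\|\le Ch\|\vv\|\,\|\nabla\chi\|$, and its symmetric variant $|T_2|\le Ch\|\nabla\vv\|\,\|\chi\|$ obtained by placing the mean cancellation on $\vv$ and bounding $\|\I_h\chi-\chi\|\le C\|\chi\|$, handle the remaining $(p,q)$ cases.

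Combining the three bounds in each of the four cases $(p,q)\in\{0,1\}^2$, and invoking Poincar\'e on $S_h\subset H_0^1$ (together with the inverse inequality under quasi-uniformity, if necessary) to interchange $\|\cdot\|\leftrightarrow\|\nabla\cdot\|$ in the $p+q<2$ cases, produces the claimed inequality $|\tveph(\vv,\chi)|\le Ch^{p+q}\|\nabla^q\vv\|\,\|\nabla^p\chi\|$. The main obstacle is the bookkeeping in $T_3$: because both $\vv$ and $\chi$ are piecewise linear, their product is genuinely quadratic, and one must track the cancellation separately for the constant, linear, and quadratic pieces in $(x-z_\tau)$, using $\qc\in\C^2$ to control the second-order Taylor remainder, in order to extract the full $h^2$ gain.
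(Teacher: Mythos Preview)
Your decomposition and overall strategy coincide with the paper's: the paper also writes
\[
\tveph(\vv,\chi)=((\alb-\all)\nabla\vv,\nabla\chi)+\vep_h(\qbar\vv,\chi)+((\qbar-\qc)\vv,\chi),
\]
handles the first term via the barycenter cancellation $\int_\tau(\all-\all(z_\tau))\,dx=O(h_\tau^2|\tau|)$ together with the constancy of $\nabla\vv\cdot\nabla\chi$ on each $\tau$, and invokes Lemma~\ref{quad-error-lemma} for $\vep_h(\qbar\vv,\chi)$, exactly as you do for $T_1$ and $T_2$.

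However, your claim for $T_3$ is too strong and is in fact false. The barycenter identity $\int_\tau(x-z_\tau)\,dx=0$ kills only the \emph{degree-one} monomial in $(x-z_\tau)$; the cross term coming from the linear part of $\qbar-\qc$ against the linear part of $\vv\chi$ produces an integral of the form $\nabla\qc(z_\tau)^T\big(\int_\tau(x-z_\tau)(x-z_\tau)^T\,dx\big)\nabla(\vv\chi)(z_\tau)$, which is of size $h_\tau^2|\tau|\,\big(|\chi(z_\tau)|\,|\nabla\vv|+|\vv(z_\tau)|\,|\nabla\chi|\big)$ and \emph{cannot} be bounded by $Ch^2\|\vv\|_\tau\|\chi\|_\tau$. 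A one-dimensional hat-function example with $\qc(x)=x$ on a nonuniform mesh already gives $|T_3|\sim h\|\vv\|\,\|\chi\|$, not $h^2$. What one actually obtains, and what the paper proves by splitting $\int_\tau(\qbar-\qc)\vv\chi\,dx$ into the part with $(\vv\chi)(z_\tau)$ (where the full $O(h^2)$ cancellation of $\int_\tau(\qbar-\qc)\,dx$ applies) and the remainder $(\vv\chi)-(\vv\chi)(z_\tau)=O(h|\nabla(\vv\chi)|)$ multiplied by the pointwise bound $|\qbar-\qc|\le Ch$, is
\[
|T_3|_\tau\le Ch_\tau^2\big(\|\vv\|_\tau\|\chi\|_\tau+\|\nabla\vv\|_\tau\|\chi\|_\tau+\|\vv\|_\tau\|\nabla\chi\|_\tau\big).
\]
This is enough: together with the analogous local bound $|T_1|_\tau\le Ch_\tau^2\|\nabla\vv\|_\tau\|\nabla\chi\|_\tau$, the paper applies the \emph{local} inverse inequality $h_\tau\|\nabla w\|_\tau\le C\|w\|_\tau$ on each element to reach $|T_j|_\tau\le Ch_\tau^{p+q}\|\nabla^q\vv\|_\tau\|\nabla^p\chi\|_\tau$ for all $(p,q)\in\{0,1\}^2$, and then sums. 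This avoids your appeal to a global inverse inequality and hence to quasi-uniformity, which is not assumed in this lemma (only shape-regularity of $\{\T_h\}$ is).
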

\begin{proof} 
In view of \eqref{tveph-def}, we may write
\[
\tveph( \vv, \chi) = ((\alb -\all) \nabla \vv, \nabla \chi)
             +( \qbar \vv, \I_h\chi ) - (\qc \vv, \chi).
\]
We then split $ \tveph( \vv, \chi)$ as a sum of
integrals over $\K \in \T_h$. Since $\alb=\all(z_\K)$, we see
$\int_\K (f -f(z_\K)) dx=0 $ for  linear functions $f$,
and hence 
\begin{equation}\label{quad-error}
 |\int_\K (f -f(z_\K) dx|\le Ch^2_{\K}|\K|\|f\|_{\C^2},\for f\in
\C^2,
\end{equation}
with $h_\K$ the maximal side length of $\K$.
Therefore, using this and the fact that $\nabla  \vv \cdot \nabla
\chi$ is constant in $\K$, we get 
\begin{equation*} 
\big| \int_\K (\alb -\all) \nabla  \vv \cdot \nabla \chi\, dx \big|
 \le C h_\K^2 \|\all\|_{\C^2}  \int_\K \big |  \nabla  \vv \cdot \nabla \chi \big |\,dx
\le C h_\K^2 \| \nabla  \vv\|_{L_2(\K)} \|\nabla \chi \|_{L_2(\K)}.
\end{equation*}
Employing  an inverse inequality locally and 
summing over $\K\in\T_h$, we obtain
\begin{equation}\label{a_h-error}
|((\alb -\all) \nabla  \vv,
\nabla \chi)| 
 \le Ch^{p+q} \|\nabla^q   \vv \| \, \| \nabla^p \chi\|.
\end{equation}
In a similar manner we estimate the zero order term. Obviously,
\begin{equation}\label{q-error-split}
 (\qbar\vv,\I_h\chi)-(\qc\vv,\chi)=\vep_h(\qbar\vv,\chi)
+((\qbar-\qc) \vv,\chi).
\end{equation}
Using Lemma \ref{quad-error-lemma} we can bound the first term on the right--hand side
 of \eqref{q-error-split}, as desired. We then split the second
term, in the 
following way
\begin{equation}
\begin{split}\label{q-error}
 \int_\K (\qbar - \qc)   \vv \, \chi\, dx
 = & \int_\K (\qbar - \qc)   (\vv \chi)(z_\K)  dx   \\
 &+   \int_\K (\qbar - \qc)   (\vv \, \chi - (\vv \chi)(z_\K) )
dx =: I + II.
\end{split}
\end{equation}
Employing \eqref{quad-error}
 we easily get
\begin{align*}
 | I | \le Ch_\K^2 \|\qc \|_{\C^2}|\K|  | (\vv \chi)(z_\K) |
& =Ch_\K^2  |\K|^{-1} \big|\int_{\K} \vv\,dx\big|\  
\big|\int_{\K}\chi\,dx \big|\\
 &\le C h^2 \|  \vv\|_{L_2(\K)} \| \chi \|_{L_2(\K)},
\end{align*}
and since $|\qc-\qbar|\le Ch_{\K}\|\qbar\|_{\C^1}$ in $\K$,
\begin{align*}
 | II | &\le  Ch_\K^2 \int_\K (\big | \nabla \vv \,
\chi \big | +  \big | \vv \, \nabla \chi \big |)\,  dx\\
&\le Ch^2 (  \| \nabla  \vv\|_{L_2(\K)} \| \chi \|_{L_2(\K)} 
+\|  \vv\|_{L_2(\K)} \|\nabla \chi \|_{L_2(\K)}).
\end{align*}
Combining the bounds  for $I$ and $II$ with
\eqref{q-error}, using an inverse inequality locally, summing over
$\K\in\T_h$ and using \eqref{a_h-error}, we conclude the proof.
\end{proof}
For the solution operator $\wtE_h(t)=e^{-\Ah \,t}$ of
\eqref{A_h-problem}, one shows, 
as in Lemma \ref{bEh-estimates}, the following smoothing property.
\begin{lemma}\label{bEh-estimates-2lemma} For  $\wtE_h$, the solution operator of 
 \eqref{A_h-problem}, we have, for $v_h\in S_h$ and $ t>0$, 
\begin{equation*}\label{bEh-estimates-2}
\|\nabla^pD_t^\ell \wtE_{h}(t)v_h\| \le Ct^{-\ell-(p-q)/2} \|\nabla^q v_h\|,  
\,  \ell \ge 0, ~~ p,q=0,1, ~~ 2\ell+p\ge q.
\end{equation*}
\end{lemma}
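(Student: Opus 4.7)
The plan is to mimic the proof of Lemma \ref{bEh-estimates} for $\wtDe_h$, replacing $-\wtDe_h$ by $\Ah$ and using the spectral decomposition coming from the eigenpairs $\{\tlj, \tpj\}$ of $\Ah$ (orthonormal with respect to $\lla\cdot,\cdot\rra$). The first step is to verify that $\Ah$ is symmetric and positive definite with respect to $\lla\cdot,\cdot\rra$. Symmetry follows from the displayed formula \eqref{a-symmetry} together with the symmetry of $(\qbar\psi,\I_h\chi)$ on $S_h$, which in turn follows from the explicit formula \eqref{4.5} once one notes that $\qbar$ is constant on each $\K\in\T_h$. Positive definiteness follows from
\[
\lla\Ah\vv,\vv\rra=\tah(\vv,\I_h\vv)=(\alb\nabla\vv,\nabla\vv)+(\qbar\vv,\I_h\vv),
\]
together with the uniform ellipticity of $\alb$ and the nonnegativity of $\qbar$.

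Next I would introduce the square root $\widetilde G_h = \Ah^{1/2}:S_h\to S_h$ and prove the analogue of the identity $\|\nabla v_h\|^2=\tribar\widetilde G_h v_h\tribar^2$ used in Lemma \ref{bEh-estimates}, namely a two-sided bound
\[
c\|\nabla v_h\|^2\le \tribar\widetilde G_h v_h\tribar^2=\lla\Ah v_h,v_h\rra\le C\|\nabla v_h\|^2,\quad\forall v_h\in S_h.
\]
The lower bound uses uniform ellipticity of $\alb$ and the fact that $(\qbar\vv,\I_h\vv)\ge0$. For the upper bound, the ellipticity piece gives $(\alb\nabla\vv,\nabla\vv)\le C\|\nabla\vv\|^2$ and, using $\|\qbar\|_{\C}\le C$ together with the norm equivalence $\tribar\cdot\tribar\sim\|\cdot\|$ on $S_h$, one has $(\qbar\vv,\I_h\vv)\le C\|\vv\|^2$, which by the Poincaré inequality on $H^1_0$ is bounded by $C\|\nabla\vv\|^2$.

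With these two ingredients in hand, the rest of the argument is the same chain of inequalities as in Lemma \ref{bEh-estimates}. Writing $v_h=\sum_j\lla v_h,\tpj\rra\tpj$, using $\wtE_h(t)\tpj=e^{-\tlj t}\tpj$ and $D_t\wtE_h(t)=-\Ah\wtE_h(t)$, and exploiting the elementary bound $x^{2\ell+p-q}e^{-2xt}\le Ct^{-(2\ell+p-q)}$ for $x>0$ (which is valid precisely when $2\ell+p-q\ge 0$), one obtains
\begin{align*}
\|\nabla^p D_t^\ell\wtE_h(t)v_h\|^2
&\le C\tribar\widetilde G_h^p D_t^\ell\wtE_h(t)v_h\tribar^2 \\
&=C\sum_j(\tlj)^{2\ell+p-q}e^{-2\tlj t}\,(\tlj)^q\lla v_h,\tpj\rra^2 \\
&\le Ct^{-(2\ell+p-q)}\tribar\widetilde G_h^q v_h\tribar^2
\le Ct^{-(2\ell+p-q)}\|\nabla^q v_h\|^2.
\end{align*}

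The main obstacle is the norm-equivalence step $\tribar\widetilde G_h v_h\tribar\sim\|\nabla v_h\|$. Unlike in the pure Laplacian case of Lemma \ref{bEh-estimates}, where this is immediate from the definition \eqref{bDe_h-def}, here one must control the extra zero-order contribution $(\qbar\vv,\I_h\vv)$ from both sides, and this relies on nonnegativity of $\qbar$, symmetry of $(\qbar\cdot,\I_h\cdot)$ (via \eqref{4.5}), and the Poincaré inequality on $H^1_0$. Everything else is a transcription of the proof of Lemma \ref{bEh-estimates}.
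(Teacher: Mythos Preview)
Your proposal is correct and is exactly the approach the paper intends: the paper states only that one proves the lemma ``as in Lemma \ref{bEh-estimates}'', and you have spelled out precisely that argument, including the one genuinely new ingredient needed here, namely the two-sided equivalence $\tribar\Ah^{1/2}v_h\tribar\sim\|\nabla v_h\|$ (which in the Laplacian case was an identity). Your justification of this equivalence via ellipticity of $\alb$, positive semidefiniteness of $(\qbar\,\cdot,\I_h\cdot)$ from \eqref{4.5}, and Poincar\'e is the natural one, and the symmetry/positive definiteness of $\Ah$ you rederive is already recorded in the paper just after \eqref{A_h-def}.
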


Further, following the steps in the proof of Lemma \ref{q_h-stability} we can get 
easily the following estimate 

\begin{lemma}\label{q_a-stability-a} 
Let $\Ah$, $Q_h$ and $\Qal$ be the operators defined by
\eqref{A_h-def} and \eqref{new-Qh}.
 Then
\begin{equation*}
\|\nabla Q_h \chi \|+ h\|\Ah Q_h \chi \|\le C h^{p+1} \| \nabla^p
\chi \|, 
\quad \forall \chi \in S_h,\for p=0,1,
\end{equation*}
and the same bounds hold if we replace $Q_h$ by $\Qal$.
\end{lemma}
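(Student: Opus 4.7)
The plan is to follow the template of Lemma \ref{q_h-stability}, adapting the two-step argument to the new operators: first bound $\|\nabla Q_h\chi\|$ by testing against $Q_h\chi$ itself, then bound $\|\Ah Q_h\chi\|$ by testing against $\Ah Q_h\chi$.

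The first ingredient I need is coercivity of $\tah$ on $S_h$ with respect to the gradient norm. By the symmetry identity \eqref{a-symmetry}, which applies since $\alb$ and $\qbar$ are piecewise constant on $\T_h$,
\[
\tah(\psi,\I_h\psi)=(\alb\nabla\psi,\nabla\psi)+(\qbar\psi,\I_h\psi)\ge c_0\|\nabla\psi\|^2,\quad\forall\psi\in S_h,
\]
using uniform positive definiteness of $\alb$ and the fact that $(\qbar\psi,\I_h\psi)\ge0$ (which follows from \eqref{4.5} together with $\qbar\ge0$ being constant on each $\K$). With this in hand, the definition \eqref{new-Qh} and Lemma \ref{quad-error-lemma} give, for $p=0,1$,
\[
c_0\|\nabla Q_h\chi\|^2\le\tah(Q_h\chi,\I_h Q_h\chi)=\vep_h(\chi,Q_h\chi)\le Ch^{p+1}\|\nabla^p\chi\|\,\|\nabla Q_h\chi\|,
\]
which yields the first bound. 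For the second bound, I use the definition \eqref{A_h-def} to rewrite
\[
\tribar \Ah Q_h\chi\tribar^2=\lla\Ah Q_h\chi,\Ah Q_h\chi\rra=\tah(Q_h\chi,\I_h\Ah Q_h\chi)=\vep_h(\chi,\Ah Q_h\chi),
\]
and Lemma \ref{quad-error-lemma} with $q=0$ bounds the right side by $Ch^p\|\nabla^p\chi\|\,\|\Ah Q_h\chi\|$. The equivalence of $\tribar\cdot\tribar$ and $\|\cdot\|$ on $S_h$ then gives $\|\Ah Q_h\chi\|\le Ch^p\|\nabla^p\chi\|$, which is the claimed estimate for the second term after multiplying by $h$.

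The argument for $\Qal$ is entirely parallel: just replace $\vep_h(\cdot,\cdot)$ by $\tveph(\cdot,\cdot)$ throughout, and invoke Lemma \ref{epal} in place of Lemma \ref{quad-error-lemma}. Since Lemma \ref{epal} gives the bound $|\tveph(\vv,\chi)|\le Ch^{p+q}\|\nabla^q\vv\|\,\|\nabla^p\chi\|$, which has exactly the same structure as the quadrature estimate, the two displayed absorption arguments above carry over verbatim and produce $\|\nabla\Qal\chi\|+h\|\Ah\Qal\chi\|\le Ch^{p+1}\|\nabla^p\chi\|$. I do not anticipate a real obstacle here since every needed tool is already in place; the only subtle point is confirming the coercivity lower bound \eqref{a-symmetry}-based estimate above, which depends only on already-established properties of $(\qbar\cdot,\I_h\cdot)$ and of $\alb$.
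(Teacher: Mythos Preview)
Your proof is correct and follows essentially the same approach as the paper's: coercivity of $\tah(\cdot,\I_h\cdot)$ combined with the definition \eqref{new-Qh} and Lemma~\ref{quad-error-lemma} gives the gradient bound via absorption, the definition of $\Ah$ together with the $q=0$ case of Lemma~\ref{quad-error-lemma} and norm equivalence gives the second bound, and the argument for $\Qal$ runs in parallel with Lemma~\ref{epal} replacing Lemma~\ref{quad-error-lemma}. The only difference is that you spell out the justification of coercivity (via \eqref{a-symmetry} and nonnegativity of $(\qbar\psi,\I_h\psi)$), whereas the paper simply invokes $\tah(\chi,\I_h\chi)\ge c\|\nabla\chi\|^2$ as an already-established fact from earlier in the section.
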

\begin{proof} 
Using the fact that $ \tah(\chi,\I_h\chi)\ge c \|\nabla \chi\|^2$, for $\chi\in S_h$,
  \eqref{new-Qh} and Lemma 
\ref{quad-error-lemma}, with $\psi=Q_h \chi$,  we obtain  for $p=0,1$,
\begin{equation*}
c \|\nabla  Q_h \chi \|^2  \le \tah(Q_h \chi, \I_h Q_h \chi)  = \vep_h(
\chi, Q_h \chi)\le Ch^{p+1} \|\nabla^p   \chi \| \, \| \nabla Q_h \chi
\|,
\end{equation*}
which bounds $Q_h\chi$ as desired.
By the definition of $\Ah$ and Lemma \ref{quad-error-lemma}  with
$q=0$, 
we also get for $p=0,1$,
\[
 \tribar\Ah Q_h \chi\tribar^2=\vep_h(\chi,\Ah Q_h\chi)\le 
  C h^{p} \| \nabla^p \chi \| \, \| \Ah Q_h\chi\|.
\]
Since the norms $\tribar\cdot\tribar$ and $\|\cdot\|$ are 
equivalent on $S_h$, this shows  the bound stated. 

To prove the corresponding bounds for $\Qal$, analogously we use Lemma
\ref{epal} instead of Lemma \ref{quad-error-lemma}.
\end{proof}

We now show an estimate for $\dehat$ defined in
\eqref{gen-Duhamel}, including exceptionally the exponential decay of
the bound.
\begin{lemma}\label{lem:de-2-est}
 For the error $\dehat$  defined by 
\eqref{gen-Duhamel}, we have
\begin{equation*}
\| \dehat(t)  \| + h\| \nabla \dehat(t) \| \le C h^2 e^{-ct}\| v_h
\|,\for t\ge0,\quad  v_h\in S_h, \with c>0.
\end{equation*}
\end{lemma}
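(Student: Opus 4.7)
The plan is to bound $\dehat$ and $\nabla \dehat$ directly from the Duhamel representation
$$\dehat(t) = -\int_0^t \wtE_h(t-s)\,\Ah\,\Qal u_h(s)\,ds,$$
by combining three ingredients: (i) the identity $\wtE_h(\tau)\Ah = -D_\tau \wtE_h(\tau)$ together with the smoothing estimate of Lemma \ref{bEh-estimates-2lemma}, (ii) the order-$h^2$ and order-$h$ bounds on $\nabla \Qal$ and $\Ah \Qal$ furnished by Lemma \ref{q_a-stability-a}, and (iii) the standard Galerkin smoothing $\|\nabla u_h(s)\|\le C s^{-1/2} e^{-cs}\|v_h\|$, which holds for \eqref{1.fem-general} by the same eigenfunction argument that gives \eqref{fem-reg}.

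For the $L_2$ norm, I would apply Lemma \ref{bEh-estimates-2lemma} with $\ell=1$, $p=0$, $q=1$ to get $\|\wtE_h(t-s)\Ah\chi\|\le C(t-s)^{-1/2}e^{-c(t-s)}\|\nabla\chi\|$, then use Lemma \ref{q_a-stability-a} with $p=1$ to obtain $\|\nabla\Qal u_h(s)\|\le Ch^2\|\nabla u_h(s)\|$, and finally invoke the Galerkin smoothing. This leads to
$$\|\dehat(t)\|\le Ch^2 \int_0^t (t-s)^{-1/2} e^{-c(t-s)} s^{-1/2} e^{-cs}\|v_h\|\,ds = Ch^2 e^{-ct}\|v_h\|\int_0^t (t-s)^{-1/2}s^{-1/2}\,ds,$$
the last integral being a Beta integral bounded by a constant independent of $t$.

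For the $H^1$ bound, I would instead split the operator as $\nabla \wtE_h(t-s)\Ah$ and apply $\|\nabla \wtE_h(\tau)\chi\|\le C\tau^{-1/2}e^{-c\tau}\|\chi\|$ (Lemma \ref{bEh-estimates-2lemma} with $\ell=0$, $p=1$, $q=0$), together with $\|\Ah \Qal\chi\|\le Ch\|\nabla\chi\|$ (Lemma \ref{q_a-stability-a} with $p=1$). The same Beta-type integral then yields $\|\nabla \dehat(t)\|\le Ch e^{-ct}\|v_h\|$, which upon multiplication by $h$ gives the second half of the claimed bound.

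The main (minor) obstacle is simply keeping the powers of $h$, $s^{-1/2}$ and $(t-s)^{-1/2}$ matched: the $h^2$ in the $L_2$ bound requires pairing the stronger $\|\nabla \Qal\chi\|\le Ch^2\|\nabla\chi\|$ estimate with the gradient-losing smoothing $\tau^{-1/2}\|\nabla\chi\|$ of $\wtE_h(\tau)\Ah$, while the $H^1$ bound requires pairing the weaker $\|\Ah\Qal\chi\|\le Ch\|\nabla\chi\|$ with the non-differentiated smoothing of $\nabla \wtE_h(\tau)$. The exponential factor $e^{-ct}$ emerges directly because the decays of $\wtE_h$ and of the semigroup $E_h$ combine as $e^{-c(t-s)}e^{-cs}=e^{-ct}$ inside the integral.
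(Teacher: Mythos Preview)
Your proposal is correct and follows essentially the same route as the paper's proof: the paper likewise applies the identity $\wtE_h(\tau)\Ah=-D_\tau\wtE_h(\tau)$, the smoothing Lemma~\ref{bEh-estimates-2lemma} (with the exponential decay included), Lemma~\ref{q_a-stability-a} with $p=1$, and the Galerkin smoothing $\|\nabla u_h(s)\|\le Cs^{-1/2}e^{-cs}\|v_h\|$, arriving at the same Beta-type integral. The only cosmetic difference is that the paper treats the $L_2$ and $H^1$ parts simultaneously in a single chain of inequalities rather than splitting them as you do.
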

\begin{proof}
Using the fact that $\wtE_h(t) \Ah = -D_t\wtE_h(t)$, Lemmas
\ref{bEh-estimates-2lemma} and
 \ref{q_a-stability-a}, and the smoothing property \eqref{fem-reg}, we
find this time taking into account the exponential decay of
$\wtE_h(t)$ and $u_h(t)$ for large $t$,
\[
 \begin{split}
\| \dehat(t)  \|  +  h\| \nabla \dehat(t) \| 
 &  \le \int_0^t \Big ( \| \wtE_h^\prime (t-s) \Qal u_{h}(s) \| 
+ h \| \nabla \wtE_h (t-s) \Ah \Qal u_{h}(s) \| \Big ) ds \\
& \le C \int_0^t (t-s)^{-1/2}e^{-c(t-s)} \Big ( \| \nabla \Qal
u_{h}(s) \| + 
h \| \Ah \Qal u_{h}(s) \| \Big ) ds \\
& \le C  h^2  \int_0^t (t-s)^{-1/2} e^{-c(t-s)}\| \nabla u_{h}(s) \|\,
 ds \\
& \le C h^2  \int_0^t (t-s)^{-1/2}e^{-c(t-s)} s^{-1/2}e^{-cs}\, 
ds\, \|v_h\| = C h^2 e^{-ct}\|v_h\|,
\end{split}
\]
which is the desired result.
\end{proof}

We are now ready for the error estimates for the solution of
\eqref{A_h-problem}.
\begin{theorem}\label{gen-smooth}
 Let $u$ and $\tu_h$ be the solutions of \eqref{1.eq1-general} and 
\eqref{A_h-problem}. Then for $t>0$,
\begin{equation*} 
\| \tu_h(t) - u(t) \|\le\begin{cases}C h^2 |v|_2, &\text{if } \|v_h-v\|\le Ch^2|v|_2,\\
C h^2t^{-1/2}|v|_1,&\text{if } v_h=P_hv \ \ 
\text{and}\ \ \|\nabla P_hv\|\le C|v|_1.
\end{cases}
\end{equation*}
Further, the estimates for the gradient of the error of Theorem
\ref{t4.3} remain valid.
\end{theorem}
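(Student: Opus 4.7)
The plan is to mirror the strategy of Theorems \ref{lumped-L2-norm-smooth}, \ref{lumped-L2-norm-msmooth} and \ref{t4.3}, with the operator $-\wtDe_h$ replaced by $\Ah$ and with the extra ``variable coefficient'' perturbation $\dehat$ absorbed via Lemma \ref{lem:de-2-est}. First I would split $\tu_h-u=\de+(u_h-u)$, where $u_h$ is the standard Galerkin approximation from \eqref{1.fem-general}. Since $\A$ is smooth and coercive, the analogues of \eqref{1.sm}--\eqref{1.hsm} (and hence Theorem \ref{SG-H1}) for $u_h-u$ extend verbatim, so it suffices to estimate $\de=\detil+\dehat$ from the Duhamel representation \eqref{gen-Duhamel}. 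The term $\dehat$ is already bounded in both $L_2$ and $\nabla$ by Lemma \ref{lem:de-2-est}, with an $O(h^2)$ bound in $L_2$ and $O(h)$ in the gradient, uniformly down to $t=0$, so all the work is in handling $\detil(t)=-\int_0^t\wtE_h(t-s)\Ah Q_h u_{h,t}(s)\,ds$.

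For the smooth data case, by the stability estimate of Lemma \ref{bEh-estimates-2lemma} and the $L_2$--stability of $R_h$, I may take $v_h=R_hv$ in bounding $\detil$. Using $\wtE_h(t-s)\Ah=-D_t\wtE_h(t-s)$, Lemmas \ref{bEh-estimates-2lemma} and \ref{q_a-stability-a} give
\[
\|\wtE_h(t-s)\Ah Q_h\chi\|\le C(t-s)^{-1/2}\|\nabla Q_h\chi\|\le Ch^2(t-s)^{-1/2}\|\nabla\chi\|,
\]
for $\chi\in S_h$. Inserting $\chi=u_{h,t}(s)$ and using $\|\nabla u_{h,t}(s)\|\le Cs^{-1/2}|v|_2$ (which follows, via Lemma~\ref{bEh-estimates-2lemma}'s analogue for $E_h$ and the fact that $A_h R_hv=P_hA v$) produces $\|\detil(t)\|\le Ch^2|v|_2$ after evaluating the Beta--type integral $\int_0^t(t-s)^{-1/2}s^{-1/2}\,ds$.

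For the mildly nonsmooth case $v\in\dot H^1$ with $v_h=P_hv$, I would follow the splitting in \eqref{delta-sol-2}: the integral from $t/2$ to $t$ is handled directly using $\|\nabla u_{h,t}(s)\|\le Cs^{-1}\|\nabla P_h v\|\le Cs^{-1}|v|_1$ together with the same bound on $\wtE_h(t-s)\Ah Q_h$; the integral from $0$ to $t/2$ is handled by integration by parts in $s$, which trades the $u_{h,t}$ factor for $u_h$ and an extra derivative $D_s\wtE_h=\wtE_h\Ah$, and then invokes Lemma \ref{bEh-estimates-2lemma} to bound $\wtE_h(t-s)\Ah^2Q_h\chi$ in terms of $(t-s)^{-3/2}\|\nabla\chi\|$. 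Both pieces produce $Ch^2t^{-1/2}|v|_1$, and combining with the bound on $\dehat$ and the known estimate for $u_h-u$ finishes the $L_2$ statement.

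Finally, the gradient estimates claimed to coincide with Theorem \ref{t4.3} are obtained in exactly the same way: the gradient of $u_h-u$ is already bounded by the analogue of Theorem \ref{SG-H1} for the operator $A$, $\nabla\dehat$ is controlled by Lemma \ref{lem:de-2-est}, and $\nabla\detil$ is estimated by using $p=1$ in Lemma \ref{bEh-estimates-2lemma} on $\wtE_h(t-s)\Ah Q_h$, which shifts a half--power of $(t-s)$ at the cost of one power of $h$, leaving a net $O(h)$ bound. The main obstacle in the whole proof is not analytic but conceptual: one has to check that the symmetry of $\tah$ (guaranteed by \eqref{a-symmetry}), together with the quadrature--type estimates for both $Q_h$ and $\Qal$ in Lemma \ref{q_a-stability-a}, really do make $\Ah$ behave like $-\wtDe_h$ in every place where the earlier proofs used a smoothing/stability bound; granted that, the $\hat\delta$ term produced by the variable coefficients is already absorbed by Lemma \ref{lem:de-2-est} and causes no loss in the rate.
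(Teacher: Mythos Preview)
Your proposal is correct and follows essentially the same approach as the paper: split $\tu_h-u=\de+(u_h-u)$, invoke the known Galerkin estimates for $u_h-u$, decompose $\de=\detil+\dehat$ via \eqref{gen-Duhamel}, dispatch $\dehat$ by Lemma \ref{lem:de-2-est}, and handle $\detil$ exactly as in Theorems \ref{lumped-L2-norm-smooth}, \ref{lumped-L2-norm-msmooth} and \ref{t4.3} with Lemmas \ref{bEh-estimates-2lemma} and \ref{q_a-stability-a} replacing their Laplacian counterparts. The paper's proof is simply a two-line pointer to those earlier arguments, and you have correctly unpacked the details (including the reduction to $v_h=R_hv$ by stability, the identity $A_hR_hv=P_h\A v$ for the Galerkin operator, and the integration-by-parts splitting for the $\dot H^1$ case).
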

\begin{proof}  As in Section \ref{sec:smooth},
 it is suffices to estimate $\de=\tu_h-u_h$. 
Using the splitting \eqref{gen-Duhamel}, 
$\de=\detil + \dehat $, the term $\dehat$ is easily  bounded by 
Lemma \ref{lem:de-2-est},  and $\detil$
is bounded as in  Theorems \ref{lumped-L2-norm-smooth} 
and \ref{lumped-L2-norm-msmooth}, now applying
 Lemmas \ref{bEh-estimates-2lemma} and \ref{q_a-stability-a}. 
\end{proof}

Turning to nonsmooth initial data, we begin with the following lemma.
\begin{lemma}
\label{lumped-L2-norm-nonsmooth-1} Let $u$ and  $\tu_h$  be the solutions of
\eqref{1.eq1-general} and 
\eqref{A_h-problem}.  Then for $t>0$
\begin{equation*}
\| \tu_h(t) -  u(t)-\wtE_h(t)\Ah Q_h v_h\| \le Ch^2t^{-1}\| v
\|,\  \ifff v_h =P_h v. 
\end{equation*}
\end{lemma}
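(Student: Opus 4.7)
The plan is to mirror the proof of the corresponding Laplacian-case lemma (Lemma \ref{lumped-L2-norm-nonsmooth}) and of \cite[Theorem 4.1]{clt11}, adapted to the general operator $\Ah$ defined by \eqref{A_h-def}. I begin by decomposing
\[
\tu_h(t)-u(t)=\big(u_h(t)-u(t)\big)+\de(t),\quad \de=\tu_h-u_h.
\]
The standard Galerkin nonsmooth data estimate (the analogue of \eqref{1.nsm} for the operator $\A$) gives $\|u_h(t)-u(t)\|\le Ch^2 t^{-1}\|v\|$ precisely when $v_h=P_hv$; this delivers both the "only if" direction and the contribution of $u_h-u$ to the bound. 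Using \eqref{gen-Duhamel}, $\de=\detil+\dehat$, where $\dehat$ is handled at once by Lemma \ref{lem:de-2-est}: $\|\dehat(t)\|\le Ch^2 e^{-ct}\|v_h\|\le Ch^2 t^{-1}\|v\|$, since $e^{-ct}\le C/t$ for $t\ge1$ and $Ch^2\le Ch^2 t^{-1}$ for $t\le1$. Thus the remaining task is
\[
\|\detil(t)-\wtE_h(t)\Ah Q_h v_h\|\le Ch^2 t^{-1}\|v\|.
\]

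Next, I integrate by parts in $s$ in the definition of $\detil$, writing $u_{h,t}(s)=D_s u_h(s)$ and using $D_s\wtE_h(t-s)=\Ah\wtE_h(t-s)$, to get
\[
\detil(t)=-\Ah Q_h u_h(t)+\wtE_h(t)\Ah Q_h v_h+\int_0^t \Ah\wtE_h(t-s)\Ah Q_h u_h(s)\,ds,
\]
so that the term $\wtE_h(t)\Ah Q_h v_h$ that is subtracted off cancels exactly. Then, to neutralize the remaining boundary term $-\Ah Q_h u_h(t)$, I integrate the piece over $[t/2,t]$ back by parts, yielding the three-term decomposition
\[
\detil(t)-\wtE_h(t)\Ah Q_h v_h=-\wtE_h(t/2)\Ah Q_h u_h(t/2)+I_1-I_2,
\]
where $I_1=\int_0^{t/2}\Ah\wtE_h(t-s)\Ah Q_h u_h(s)\,ds$ and $I_2=\int_{t/2}^t\wtE_h(t-s)\Ah Q_h u_{h,t}(s)\,ds$.

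To bound each piece by $Ch^2 t^{-1}\|v\|$, I combine Lemmas \ref{bEh-estimates-2lemma} and \ref{q_a-stability-a} to obtain the two workhorse estimates
\[
\|\wtE_h(\tau)\Ah Q_h\chi\|\le Ch^2\tau^{-1/2}\|\nabla\chi\|, \qquad \|\Ah\wtE_h(\tau)\psi\|\le C\tau^{-1}\|\psi\|,
\]
together with the Galerkin smoothing $\|\nabla u_h(s)\|\le Cs^{-1/2}\|v\|$ and $\|\nabla u_{h,t}(s)\|\le Cs^{-3/2}\|v\|$ from \eqref{fem-reg}. The first term is immediate: $\|\wtE_h(t/2)\Ah Q_h u_h(t/2)\|\le Ch^2 t^{-1/2}\cdot Ct^{-1/2}\|v\|=Ch^2 t^{-1}\|v\|$. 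For $I_2$, the first workhorse gives $Ch^2(t-s)^{-1/2}s^{-3/2}\|v\|$, whose integral over $[t/2,t]$ (where $s\sim t$) is of order $Ch^2 t^{-1}\|v\|$.

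The main obstacle is $I_1$: applying either workhorse alone yields only an $O(h)$ bound. The key step is the refined estimate
\[
\|\Ah\wtE_h(\tau)\Ah Q_h\chi\|\le Ch^2\tau^{-3/2}\|\nabla\chi\|,
\]
which I obtain by using the semigroup property $\wtE_h(\tau)=\wtE_h(\tau/2)\wtE_h(\tau/2)$ and applying the second workhorse to the outer factor and the first to the inner factor. Then $I_1$ is bounded by $Ch^2\int_0^{t/2}(t-s)^{-3/2}s^{-1/2}\,ds\,\|v\|$, and since $t-s\ge t/2$ on $[0,t/2]$ we get $Ch^2\cdot t^{-3/2}\cdot Ct^{1/2}\|v\|=Ch^2 t^{-1}\|v\|$, completing the proof.
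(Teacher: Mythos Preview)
Your proof is correct and follows essentially the same approach as the paper: the paper's own argument is a one-line reference to Lemma \ref{lem:de-2-est} for $\dehat$ and to \cite[Theorem 4.1]{clt11} for $\detil(t)-\wtE_h(t)\Ah Q_h v_h$, and you have written out precisely those steps in detail. One small remark: the lemma is an ``if'' statement, not an ``if and only if'', so your reference to an ``only if'' direction is superfluous; otherwise the decomposition, the integration-by-parts cancellations, and the three estimates via Lemmas \ref{bEh-estimates-2lemma} and \ref{q_a-stability-a} are exactly what the cited argument amounts to.
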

\begin{proof} Using Lemma \ref{lem:de-2-est}  for $\dehat$,  it remains to bound 
$\tde(t)-\wtE_h(t)\Ah Q_h v_h$, which as for
 Lemma \ref{lumped-L2-norm-nonsmooth}, is done as in
\cite[Theorem 4.1]{clt11}.
\end{proof}

The following is now our nonsmooth data error estimate. Its proof
is an obvious modification of that of Theorem  \ref{lumped-nonsmooth-opt},  
using Lemmas \ref{bEh-estimates-2lemma},  \ref{q_a-stability-a} and
\ref{lumped-L2-norm-nonsmooth-1}. 
\begin{theorem}\label{error-gen}
 Let $u$ and $\tu_h$ be the solutions of \eqref{1.eq1-general} and 
\eqref{A_h-problem}, and let $Q_h$
be defined by \eqref{new-Qh}. Then, if 
 \eqref{higher_error} holds, we have
\begin{equation*}
\| \tu_h(t) -  u(t)\| \le Ch^2t^{-1}\| v \|,\ \ifff v_h =P_h v, \for
t>0.
\end{equation*}
\end{theorem}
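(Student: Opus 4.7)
The plan is to follow the same template as the proof of Theorem \ref{lumped-nonsmooth-opt}, with $\wtDe_h$ replaced by $-\Ah$ and with the extra term involving $\tveph$ already absorbed into the supporting lemmas. The first move is to invoke Lemma \ref{lumped-L2-norm-nonsmooth-1}, which gives, for $v_h = P_h v$,
\[
\|\tu_h(t) - u(t) - \wtE_h(t)\Ah Q_h P_h v\| \le Ch^2 t^{-1}\|v\|, \for t>0.
\]
By the triangle inequality, the desired estimate will then follow as soon as I can show
\[
\|\wtE_h(t)\Ah Q_h P_h v\| \le Ch^2 t^{-1}\|v\|, \for t>0.
\]
This reduction is essentially cost-free, because Lemma \ref{lumped-L2-norm-nonsmooth-1} already incorporates the bound for $\dehat$ from Lemma \ref{lem:de-2-est}, so the contribution of the lower-order quadrature functional $\tveph$ does not reappear.

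For the remaining bound I would use two ingredients. First, since $\wtE_h(t) = e^{-\Ah t}$ with $\Ah$ symmetric and positive definite with respect to $\lla\cdot,\cdot\rra$, the smoothing property of Lemma \ref{bEh-estimates-2lemma}, specialized to $\ell=1$ and $p=q=0$, together with $D_t \wtE_h(t) = -\Ah \wtE_h(t) = -\wtE_h(t)\Ah$, yields
\[
\|\wtE_h(t)\Ah \chi\| \le C t^{-1}\|\chi\|, \quad \forall \chi \in S_h, \for t>0.
\]
Second, applying this with $\chi = Q_h P_h v$ and then invoking assumption \eqref{higher_error} together with the $L_2$-stability of $P_h$ gives
\[
\|\wtE_h(t)\Ah Q_h P_h v\| \le C t^{-1}\|Q_h P_h v\| \le C h^2 t^{-1}\|P_h v\| \le C h^2 t^{-1}\|v\|,
\]
which is exactly what is required.

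There is no genuine obstacle here; the substantive analytical work has been done in Lemmas \ref{bEh-estimates-2lemma}, \ref{q_a-stability-a}, and \ref{lumped-L2-norm-nonsmooth-1}. The only point that needs to be checked is that the smoothing estimate of Lemma \ref{bEh-estimates-2lemma} genuinely applies to $\Ah$, rather than only to $-\wtDe_h$. This is ensured by the modified definition \eqref{1.a-h-modified} of $\tah$, which makes $\tah(\cdot,\I_h\cdot)$ symmetric and positive definite on $S_h$ via \eqref{a-symmetry}, so that the eigenfunction expansion underlying the $\wtDe_h$ case transfers to $\Ah$ without change.
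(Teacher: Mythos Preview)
Your proposal is correct and follows essentially the same route as the paper: invoke Lemma \ref{lumped-L2-norm-nonsmooth-1} to reduce the problem to bounding $\|\wtE_h(t)\Ah Q_h P_h v\|$, then combine the smoothing estimate of Lemma \ref{bEh-estimates-2lemma} with assumption \eqref{higher_error}. The paper states only that the proof is ``an obvious modification of that of Theorem \ref{lumped-nonsmooth-opt}, using Lemmas \ref{bEh-estimates-2lemma}, \ref{q_a-stability-a} and \ref{lumped-L2-norm-nonsmooth-1},'' and your write-up is exactly that modification spelled out.
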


Condition \eqref{higher_error} on $Q_h$ is again satisfied for
symmetric meshes:
\begin{theorem}\label{gen-sym}
For $\{\T_h\}$  symmetric, \eqref{higher_error} holds for
 $Q_h$ defined by \eqref{new-Qh}.
\end{theorem}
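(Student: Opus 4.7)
The plan is to mimic the duality proof of Theorem \ref{verify}, replacing the Poisson problem by the adjoint problem for $\A$ and treating the extra terms that arise because $\tah(\cdot,\I_h\cdot)$ is no longer just the Dirichlet form. Concretely, for a given $\psi\in S_h$, I would write
\[
\|Q_h\psi\| = \sup_{\chi\in S_h}\frac{(Q_h\psi,\chi)}{\|\chi\|},
\]
and for each $\chi$ introduce $\vfy\in\dot H^2$ as the solution of $\A\vfy=\chi$ in $\Omega$ with $\vfy=0$ on $\partial\Omega$; since $\A$ is selfadjoint with symmetric positive definite $\all$ and $\Omega$ is convex, elliptic regularity yields $|\vfy|_2\le C\|\chi\|$. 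Then I would expand
\[
(Q_h\psi,\chi)=a(Q_h\psi,\vfy)=a(Q_h\psi,\vfy-\Id_h\vfy)+a(Q_h\psi,\Id_h\vfy),
\]
where $\Id_h$ is nodal interpolation into $S_h$.

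The first term is routine: by the standard bounds $\|\vfy-\Id_h\vfy\|+h\|\nabla(\vfy-\Id_h\vfy)\|\le Ch^2|\vfy|_2$, together with Lemma \ref{q_a-stability-a} which gives $\|Q_h\psi\|+h^{-1}\|\nabla Q_h\psi\|\le Ch\|\psi\|$, one finds
\[
|a(Q_h\psi,\vfy-\Id_h\vfy)|\le C\bigl(\|\nabla Q_h\psi\|+\|Q_h\psi\|\bigr)\cdot h|\vfy|_2\le Ch^2\|\psi\|\,\|\chi\|.
\]
For the second term I would use the key identity \eqref{a-symmetry}, which holds because $\alb,\qbar$ are piecewise constant, to rewrite
\[
a(Q_h\psi,\Id_h\vfy)=\tah(Q_h\psi,\I_h\Id_h\vfy)-\tveph(Q_h\psi,\Id_h\vfy)=\vep_h(\psi,\Id_h\vfy)-\tveph(Q_h\psi,\Id_h\vfy),
\]
where the last equality is the definition \eqref{new-Qh} of $Q_h$. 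The error term $\tveph(Q_h\psi,\Id_h\vfy)$ is harmless: Lemma \ref{epal} with $p=q=1$ together with Lemma \ref{q_a-stability-a} gives $|\tveph(Q_h\psi,\Id_h\vfy)|\le Ch^2\|\nabla Q_h\psi\|\,\|\nabla\Id_h\vfy\|\le Ch^3\|\psi\|\,\|\chi\|$.

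The main obstacle, and the step where symmetry is used, is the bound on $\vep_h(\psi,\Id_h\vfy)=[\psi,M_h\Id_h\vfy]$. For this I would repeat verbatim the Bramble--Hilbert argument from Theorem \ref{verify}: on any symmetric patch $\Pi_{\zeta_0}$, the representation \eqref{Mh-def} makes $M_h\Id_h\vfy(\zeta_0)$ a linear combination of symmetric second differences $\vfy(\zeta_j)-2\vfy(\zeta_0)+\vfy(\zeta_j')$, so it annihilates linears and \eqref{5.delst} gives $|M_h\Id_h\vfy(\zeta_0)|\le Ch^3\|\vfy\|_{H^2(\Pi_{\zeta_0})}$. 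Summing over patches and applying Cauchy--Schwarz as in \eqref{eh-bound} yields $|\vep_h(\psi,\Id_h\vfy)|\le Ch^2\|\psi\|\,|\vfy|_2\le Ch^2\|\psi\|\,\|\chi\|$. Combining the three estimates and taking the supremum over $\chi$ gives $\|Q_h\psi\|\le Ch^2\|\psi\|$, which is \eqref{higher_error}. The only genuinely new ingredient compared to Theorem \ref{verify} is the algebraic splitting via \eqref{a-symmetry} that isolates $\vep_h(\psi,\Id_h\vfy)$ from the variable-coefficient remainder $\tveph$; once that is done, the symmetry-based cancellation on patches is exactly as before.
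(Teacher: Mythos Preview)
Your proof is correct and follows the paper's approach essentially step for step: the same duality argument with $\A\vfy=\chi$, the same splitting $a(Q_h\psi,\vfy)=a(Q_h\psi,\vfy-\Id_h\vfy)+a(Q_h\psi,\Id_h\vfy)$, the same decomposition of the second term into $\tah(Q_h\psi,\I_h\Id_h\vfy)-\tveph(Q_h\psi,\Id_h\vfy)=\vep_h(\psi,\Id_h\vfy)-\tveph(Q_h\psi,\Id_h\vfy)$, and the same symmetry-based Bramble--Hilbert bound on $[\psi,M_h\Id_h\vfy]$. One cosmetic remark: the rewriting $a(Q_h\psi,\Id_h\vfy)=\tah(Q_h\psi,\I_h\Id_h\vfy)-\tveph(Q_h\psi,\Id_h\vfy)$ is simply the definition \eqref{tveph-def} of $\tveph$, not the identity \eqref{a-symmetry}; you do not need \eqref{a-symmetry} here at all.
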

\begin{proof}
We  follow the  steps in the proof  of Theorem \ref{verify}.
For  given $\chi \in S_h$ we define $\varphi=\varphi_\chi\in \dot H^1$ as the solution 
of the Dirichlet problem $\A\vfy=\chi$  in $\Om$,
 $\vfy=0$ on $\partial\Om$. Since $\Om$ is  convex we have
$\varphi \in \dot H^2$ and $|\varphi|_{2} \le C \|\chi\|$. 
For $\psi \in S_h$, we have
\begin{align*}
\| Q_h \psi \| & =\sup_{\chi \in S_h} \frac{(Q_h \psi, \chi)}{\|\chi\|}
  =\sup_{\chi \in S_h}
\frac{ a( Q_h \psi, \vfy)}{\|\chi\|}
\\
  &\le\sup_{\chi \in S_h}
 \frac{|a( \ww, \vfy - I_h \vfy)|}{\|\chi\|}
  +\sup_{\chi \in S_h}
\frac{|a( \ww, I_h \vfy)|}{\|\chi\|}=I+II.
\notag
\end{align*}

By the obvious error estimate for $I_h$ 
and Lemma \ref{q_a-stability-a}, with $p=0$,
we get
\begin{equation*}
| I | \le C h \,\sup_{\chi\in S_h} \frac{\|\nabla Q_h \psi \|  \, |\vfy|_{2}}{\|\chi \| } 
\le C h^2 \|\psi\|.
\end{equation*}
To estimate $II$, we rewrite  the numerator in the form 
\begin{equation*}
 a( \ww, I_h \vfy) =  
-\tveph( Q_h\psi, I_h \vfy)+ \tah( Q_h\psi, \I_h I_h
\vfy)={ii_1+ii_2.}
\end{equation*}
In order to complete the proof it suffices to show that 
\begin{equation*}\label{a-error-0}
|ii_1+ii_2|\le Ch^2\|\chi\|\,\|\psi\|. 
\end{equation*}
Using Lemmas \ref{epal} and \ref{q_a-stability-a} we obtain
\begin{equation*}\label{a-error-1}
|ii_1 | \le C h^2 \|\nabla Q_h \psi \|\,\|
\nabla I_h \vfy\|
  \le C h^2 \|\nabla Q_h \psi \|\, \|\vfy\|_{H^2} \le C h^2 \| \psi \|\, \|\chi\| .
\end{equation*}
Also, employing \eqref{new-Qh} and \eqref{Mh-def0} we get
 \begin{equation*}\label{a-error-2}
ii_2=\vep_h(\psi,I_h\vfy)=[\psi,M_hI_h\vfy].
\end{equation*}
Since the family $\{\T_h\}$ is symmetric, \eqref{eh-bound} 
shows the required bound for $ii_2$.
\end{proof}

The results of Theorems \ref{verify1} and \ref{th4.3}
for our less restrictive assumptions on the family $\{\T_h\}$
also remain
valid, with the obvious modified proofs.

The above results for the spatially semidiscrete finite volume method 
\eqref{1.fv-general-22} extend in the obvious way to the fully
discrete backward Euler method \eqref{lmvvv} and the Crank--Nicolson
method \eqref{cnvvv}, with $-\wtDe_h$ replaced by $\Ah$, so that
Theorems \ref{lmfully}--\ref{lmfully-3} remain literally valid in the
general case.
\section*{Acknowledgements}
The research of R.~D. Lazarov was supported in parts by US NSF Grant
DMS-1016525 and by award KUS-C1-016-04, made by King Abdullah University
of Science and Technology (KAUST). 
The research of P. Chatzipantelidis was partly supported by the
FP7-REGPOT-2009-1 project ``Archimedes Center for Modeling Analysis
and Computation'', funded by the European Commission.
%

\end{document}